\theoremstyle{definition}
\newtheorem{theorem}{Theorem}
\newtheorem{lemma}[theorem]{Lemma}
\newtheorem{corollary}[theorem]{Corollary}
\newtheorem{proposition}[theorem]{Proposition}
\newtheorem{example}[theorem]{Example}
\newtheorem{definition}[theorem]{Definition}
\newcommand{\tto}{\twoheadrightarrow}
\newcommand{\blank}{\underline{\hspace{0.2cm}}}
\DeclareMathOperator*\medoplus{\mathchoice
	{\textstyle\bigoplus}
	{\textstyle\bigoplus}
	{\scriptstyle\bigoplus}
	{\scriptscriptstyle\bigoplus}
}
\begin{document}
	\title[Tilting modules and exceptional sequences]
	{Tilting modules and exceptional sequences \\ for a family of dual extension algebras}
	
	\author{Elin Persson Westin and Markus Thuresson}
	
	\begin{abstract}
		We provide a classification of generalized tilting modules and full exceptional sequences for the dual extension algebra of the path algebra of a uniformly oriented linear quiver modulo the ideal generated by paths of length two with its opposite algebra. 
		For the classification of generalized tilting modules we develop a  combinatorial model for the poset of indecomposable self-orthogonal modules with standard filtration with respect to the relation arising from higher extensions. 
	\end{abstract}
	
	\maketitle
	
	\noindent
	{\bf 2020 Mathematics Subject Classification:}  16G20, 16D70, 06A07
	
	\noindent
	{\bf Keywords:}  generalized tilting module; exceptional sequence; dual extension algebra; quasi-hereditary algebra; extension
	
	\section{Introduction}
	Since its introduction in \cite{BB80, HR82}, tilting theory has become an important part of the representation theory of finite-dimensional algebras. A basic classification problem in tilting theory is to classify all generalized tilting modules over a given algebra. In general, this problem is very difficult. Some instances of where this problem and its generalizations have been studied can be found in \cite{Ad16, BK04, MU93, PW20, Ya12}.
	
	Quasi-hereditary algebras were first defined in \cite{Sc87}. In \cite{CPS88}, highest weight categories were introduced as a category theoretical counterpart of certain structures in the representation theory of complex semisimple Lie algebras. In the same paper, \cite{CPS88}, the quasi-hereditary algebras were characterized as exactly those finite-dimensional algebras whose module categories are highest weight categories. Examples of quasi-hereditary algebras include hereditary algebras, Schur algebras, algebras of global dimension two and algebras describing blocks of BGG category $\mathcal{O}$. 
	
	The chief protagonists of the representation theory of quasi-hereditary algebras are the standard and costandard modules, as well as the associated subcategories, $\mathcal{F}(\Delta)$ and $\mathcal{F}(\nabla)$, of the module category, consisting of those modules which admit a filtration by standard and costandard modules, respectively. Importantly, Ringel showed in \cite{Ri91} that $(\mathcal{F}(\Delta),\mathcal{F}(\nabla))$ is a homologically orthogonal pair.
	Moreover, Ringel showed that for any quasi-hereditary algebra, there exists a generalized tilting module $T$, called the characteristic tilting module, whose additive closure equals exactly $\mathcal{F}(\Delta)\cap \mathcal{F}(\nabla)$.
	
	Originally studied in \cite{Bo89}, and related to tilting theory, are exceptional modules and full exceptional sequences over a given algebra. For a quasi-hereditary algebra, the sequences of standard and costandard modules are examples of full exceptional sequences. The papers \cite{HP19, PW20} provide a classification of the full exceptional sequences over the Auslander algebra of $\Bbbk[x]/(x^n)$ and its quadratic dual, respectively. 
	
	A large family of quasi-hereditary algebras is constituted by the dual extension algebras defined by Xi in \cite{Xi}. 
	These were introduced as an example of a class of BGG algebras. BGG algebras are quasi-hereditary algebras admitting a simple preserving duality on their module categories, see \cite{Ir90}. Xi's original construction was soon generalized and has been studied in greater detail in \cite{DengXi, DengXiringel, LW15, Lixu, Shun, Xigldim, XiTilting}. The algebras studied in this paper are examples of dual extension algebras.
	
	The following is a brief description of the main results of this article.
	\begin{enumerate}[(A)]
		\item Let $\mathbb{A}_n$ be the uniformly oriented linear quiver with $n$ vertices, let $\Bbbk$ an algebraically closed field and set $$A_n=\faktor{\Bbbk \mathbb{A}_n}{ (\operatorname{rad}\Bbbk \mathbb{A}_n )^2}.$$ 
		Let $\Lambda_n$ be the dual extension algebra $\mathcal{A}\left(A_n,A_n^{\operatorname{op}}\right)$.
		We obtain a classification of generalized tilting modules over the algebra $\Lambda_n$. This is achieved through the following steps. First, we show that any generalized tilting module is contained in $\mathcal{F}(\Delta)$ or in $\mathcal{F}(\nabla)$. 
		Using the simple-preserving duality, this reduces the problem to the classification of generalized tilting modules in $\mathcal{F}(\Delta)$.
		Then, we consider the set of isomorphism classes of indecomposable self-orthogonal modules with a standard filtration. 
		We provide a combinatorial description of the non-zero extensions of positive degree between the modules in this set in terms of a certain transitive relation.  
		This interpretation allows us to classify generalized tilting modules contained in $\mathcal{F}(\Delta)$ as the maximal anti-chains with respect to this fixed relation.
		\item We obtain a classification of full exceptional sequences of $\Lambda_n$-modules. We show that a full exceptional sequence is uniquely determined by going through each index $i$, for $2\leq i \leq n$, and choosing either the standard module $\Delta(i)$ or the costandard module $\nabla(i)$. More precisely, a full exceptional sequence is of the form	
		$$\left(\nabla(m_1),\nabla(m_2),\dots, \nabla(m_i),L(1), \Delta(n_1),\Delta(n_2),\dots, \Delta(n_j)\right)$$
		where
		\begin{enumerate}[$\bullet$]
			\item $i+j=n-1$;
			\item $\{m_1,m_2,\dots, m_i, n_1,n_2,\dots, n_j\}=\{2,3,\dots, n\}$;
			\item $m_1 > m_2 >\dots >m_i$ and $n_1<n_2<\dots n_j$.
		\end{enumerate}
	\end{enumerate}
	Comparing the results with the first author's article, \cite{PW20}, in which the same problems are studied for another class of algebras, we see that the classification of generalized tilting modules is more complicated in the current case, while the classification of full exceptional sequences is identical in the two cases, in the sense that the form of the sequences is the same.

	The present article is organized as follows. In Section~2, we briefly introduce the algebras $\Lambda_n$, which are our objects of study, and recall some results on their quasi-hereditary structure. In Section~3, we classify the indecomposable $\Lambda_n$-modules using the results from \cite{BR87, WaldWasch} on the classification of indecomposable modules over special biserial algebras. We also introduce some notation which is important for the readability of the subsequent sections. Section~4 contains a classification of the self-orthogonal indecomposable $\Lambda_n$-modules. In Section~5, we classify the generalized tilting modules over $\Lambda_n$. Section~6 contains the classification of full exceptional sequences over $\Lambda_n$.

	\section{Background}
	Throughout the rest of the article, let $\Bbbk$ be an algebraically closed field. 
	Let $\mathbb{A}_n$ be the uniformly oriented linear quiver $$\xymatrix{1\ar[r] & 2 \ar[r] & \dots \ar[r] & n-1 \ar[r] & n},$$ for some $n\in\mathbb{Z}_{>1}$ and denote by $\Bbbk \mathbb{A}_n$ the corresponding path algebra. Let $A_n$ be the quotient of $\Bbbk\mathbb{A}_n$ by the ideal $\left( \operatorname{rad} \Bbbk\mathbb{A}_n\right)^2$. Finally, we define $\Lambda_n$ to be the dual extension algebra of $A_n$ with its opposite algebra, $A_n^{\operatorname{op}}$, that is, $\Lambda_n=\mathcal{A}(A_n, A_n^{\operatorname{op}})$. Then, $\Lambda_n$ is given by the quiver
	
	\begin{align*}
		\xymatrix{
			1 \ar@/^1pc/[r]^-{\alpha_1} & 2	\ar@/^1pc/[l]^-{\alpha^\prime_1} \ar@/^1pc/[r]^-{\alpha_2} & 3 \ar@/^1pc/[l]^-{\alpha^\prime_2} \ar@/^1pc/[r] & \dots \ar@/^1pc/[l] \ar@/^1pc/[r] & n-1 \ar@/^1pc/[r]^-{\alpha_{n-1}} \ar@/^1pc/[l] & n \ar@/^1pc/[l]^-{\alpha_{n-1}^\prime}
		}
	\end{align*}
	subject to the relations
	\begin{align*}
		\alpha_{i+1}\alpha_i=0,\quad \alpha_j^\prime \alpha_{j+1}^\prime=0\quad \text{and}\quad \alpha_i \alpha_i^\prime=0.
	\end{align*}

	Let $\Lambda_{n}\operatorname{-mod}$ denote the category of finite-dimensional left $\Lambda_{n}$-modules. Throughout the rest of the article, we take ``module'' to mean left module.
	The algebra $\Lambda_n$ has a simple-preserving duality on its module category, denoted by $\star$, induced by the antiautomorphism given by swapping the arrows $\alpha_i$ and $\alpha_i'$ in the quiver of  $\Lambda_n$.
	
	Let $L(i)$, where $\ 1\leq i\leq n$, denote the simple $\Lambda_n$-module corresponding to the vertex $i$. Let $P(i)$ and $I(i)$ denote its projective cover and injective envelope, respectively.
	
	\begin{definition}\cite{CPS88}
		Let $\Lambda$ be a finite-dimensional algebra. Let $\{1,\dots, n\}$ be an indexing set for the isomorphism classes of simple $\Lambda$-modules and let $<$ be a partial order on $\{1,\dots, n\}$. The algebra $\Lambda$ is said to be \emph{quasi-hereditary} with respect to $<$ if there exist modules $\Delta(i)$, where $i\in \{1,\dots, n\}$, called \emph{standard modules}, satisfying the following.
		\begin{enumerate}[(QH1)]
			\item  There is a surjection $P(i)\tto \Delta(i)$ whose kernel admits a filtration with subquotients $\Delta(j)$, where $j>i$.
			\item There is a surjection $\Delta(i) \tto L(i)$ whose kernel admits a filtration with subquotients $L(j)$, where $j<i$.
		\end{enumerate}
		This is equivalent to the existence of modules $\nabla(i)$, where $i\in \{1, \dots, n\}$, called \emph{costandard modules}, satisfying the following.
		\begin{enumerate}[(QH1)$^\prime$]
			\item There is an injection $\nabla(i)\hookrightarrow I(i)$ whose cokernel admits a filtration with subquotients $\nabla(j)$, where $j>i$.
			\item There is an injection $L(i)\hookrightarrow \nabla(i)$ whose cokernel admits a filtration with subquotients $L(j)$, where $j<i$.
		\end{enumerate}
	\end{definition}

	It is easy to see that $\Lambda_n$ is quasi-hereditary with respect to the natural ordering on $\{1,\dots, n\}$. Indeed, (QH1) and (QH2) are easily verified with standard and costandard modules as below. Note that results from \cite{Xi} show that the dual extension algebra $\Lambda_n=\mathcal{A}(A_n,A_n^{\operatorname{op}})$ is quasi-hereditary. Throughout, let $\mathcal{F}(\Delta)$ denote the full subcategory of $\Lambda_n\operatorname{-mod}$ consisting of those modules which admit a filtration by standard modules. Similarly, let $\mathcal{F}(\nabla)$ denote the full subcategory consisting of those modules which admit a filtration by costandard modules. Since $\Lambda_n$ is quasi-hereditary, Theorem~5 from \cite{Ri91} guarantees that there exists a basic module $T$, called the \emph{characteristic tilting module}, such that the additive closure of $T$, denoted by $\operatorname{add}T$, equals $\mathcal{F}(\Delta)\cap \mathcal{F}(\nabla)$. Moreover, $T$ has the same number of non-isomorphic indecomposable summands as the number of isomorphism classes of simple modules, and we write $$T=\medoplus\limits_{k=1}^n T(k).$$ The indecomposable direct summand $T(k)$ of $T$ is uniquely determined (up to isomorphism) by the property that it belongs to $\mathcal{F}(\Delta)\cap \mathcal{F}(\nabla)$ and that there exists a monomorphism $\Delta(k)\hookrightarrow T(k)$, whose cokernel admits a filtration by standard modules.
	
	We conclude this section by drawing the Loewy diagrams of the structural modules of $\Lambda_n$ and stating some of their elementary properties.

	\begin{align*}
		P(i): \; &\vcenter{ \xymatrixrowsep{0.2cm}\xymatrixcolsep{0.2cm}\xymatrix{ & \ar[ld] i\ar[rd] \\ i-1 & & i+1 \ar[ld] \\ & i}} \;
		& 
		I(i): \; &\vcenter{ \xymatrixrowsep{0.2cm}\xymatrixcolsep{0.2cm}\xymatrix{ & i\ar[rd] \\ i-1 \ar[rd]& & i+1 \ar[ld] \\ & i}} \;
		& 
		\text{for } &i=2,\dots, n-1, \\
		\Delta(i): \; & \vcenter{\xymatrixrowsep{0.2cm}\xymatrixcolsep{0.2cm}\xymatrix{ & i \ar[ld] \\ i-1}} \;
		& 
		\nabla(i): \; & \vcenter{\xymatrixrowsep{0.2cm}\xymatrixcolsep{0.2cm}\xymatrix{ i-1 \ar[rd] \\  &i }}  \;
		& 
		\text{for } &i=2,\dots, n.
	\end{align*}
	The remaining cases are $\Delta(1)=\nabla(1)=L(1)$, $P(n)=\Delta(n)$, $\ I(n)=\nabla(n)$ and
	$$P(1)=I(1): \vcenter{ \xymatrixrowsep{0.2cm}\xymatrixcolsep{0.2cm}\xymatrix{ 1 \ar[rd] \\ &2 \ar[ld] \\ 1}}.$$

	Finally, from these pictures we see that we have the following lemma.
	\begin{lemma}\label{lemma: short exact sequences}
		For every $i=1,\dots, n-1$ we have the following non-split short exact sequences in $\Lambda_n\operatorname{-mod}$:
		\[
		\Delta(i+1) \hookrightarrow P(i) \tto \Delta(i), \quad \nabla(i)\hookrightarrow I(i) \tto \nabla(i+1).
		\]
	\end{lemma}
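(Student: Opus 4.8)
The plan is to read the first short exact sequence directly off the Loewy diagrams displayed above, and then to obtain the second family from the first by applying the simple-preserving duality $\star$.

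For the first sequence, fix $i$ with $2\le i\le n-1$. In the Loewy diagram of $P(i)$ the copy of $L(i+1)$ sitting in $\operatorname{rad}P(i)$ generates a uniserial submodule with top $L(i+1)$ and socle $L(i)$, the second layer being reached via the action of $\alpha_i'$; this submodule is exactly $\Delta(i+1)$. Factoring it out leaves a module with top $L(i)$ and radical $L(i-1)$, which is $\Delta(i)$, so we obtain $\Delta(i+1)\hookrightarrow P(i)\tto\Delta(i)$. The two extreme indices are handled identically from their diagrams: for $i=1$ one uses $P(1)=I(1)$ with Loewy layers $L(1),L(2),L(1)$, whose length-two submodule is $\Delta(2)$ with quotient $L(1)=\Delta(1)$; for $i=n-1$ the submodule of $P(n-1)$ generated by the radical copy of $L(n)$ is $\Delta(n)=P(n)$, with quotient $\Delta(n-1)$. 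In every case the sequence is non-split, since $P(i)$ is an indecomposable projective module and hence cannot be a direct sum of the two nonzero outer terms.

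For the second sequence, I would apply the contravariant exact duality $\star$ to $\Delta(i+1)\hookrightarrow P(i)\tto\Delta(i)$. Being a simple-preserving duality, $\star$ interchanges projective and injective modules and, by comparing Loewy diagrams (dualizing turns the top of a module into its socle and conversely), sends $\Delta(j)$ to $\nabla(j)$ and $P(i)$ to $I(i)$. Thus $\star$ converts the first sequence into $\nabla(i)\hookrightarrow I(i)\tto\nabla(i+1)$, which is non-split because $\star$ preserves splitness, equivalently because $I(i)$ is an indecomposable injective. There is no genuine obstacle here; the only points requiring a moment's care are the bookkeeping of the boundary indices $i=1$ and $i=n-1$ and the routine remark that a module of Loewy length two is determined by its top and its radical, which is what lets one identify the relevant sub- and quotient modules with the standard and costandard modules read off the pictures.
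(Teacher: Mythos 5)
Your proof is correct and matches the paper's approach: the paper simply asserts the lemma as a direct consequence of the displayed Loewy diagrams, and you have spelled out exactly that reading-off argument, together with the standard non-splitness observation (indecomposability of $P(i)$) and the passage to the second family via the simple-preserving duality. No issues.
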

	\begin{proposition}\label{proposition:ext between standards and costandards}
		For $m=j-i>0$ we have 
		\[
		\operatorname{Ext}_{\Lambda_n}^m(\Delta(i),\Delta(j))\neq 0 \quad \text{and}\quad \operatorname{Ext}_{\Lambda_n}^m(\nabla(j),\nabla(i))\neq 0.
		\]
	\end{proposition}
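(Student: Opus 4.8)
The plan is to treat the two non-vanishing statements separately: first the one for standard modules, by an induction on $m=j-i$ using syzygies, and then the one for costandard modules, by transporting the first through the simple-preserving duality $\star$. The crucial input is Lemma~\ref{lemma: short exact sequences}, which for $1\le i\le n-1$ supplies the non-split short exact sequence
$$0 \longrightarrow \Delta(i+1) \longrightarrow P(i) \longrightarrow \Delta(i) \longrightarrow 0.$$
Reading off the Loewy diagrams, $\Delta(i)$ and $P(i)$ have the same top $L(i)$, so $P(i)\twoheadrightarrow\Delta(i)$ is a projective cover and hence $\Omega\Delta(i)\cong\Delta(i+1)$ for all $i$ in this range; this identity will only ever be used for indices between $i$ and $j-1$, all of which are at most $n-1$ since $j\le n$.

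For the standard modules I would induct on $m$. The base case $m=1$, i.e.\ $j=i+1$, is immediate: the short exact sequence above is non-split, so it represents a nonzero class in $\operatorname{Ext}^1_{\Lambda_n}(\Delta(i),\Delta(i+1))$. For $m\ge 2$, applying $\operatorname{Hom}_{\Lambda_n}(-,\Delta(j))$ to the syzygy sequence and using $\operatorname{Ext}^{\ge 1}_{\Lambda_n}(P(i),-)=0$ yields the dimension-shift isomorphism
$$\operatorname{Ext}^m_{\Lambda_n}(\Delta(i),\Delta(j)) \cong \operatorname{Ext}^{m-1}_{\Lambda_n}(\Delta(i+1),\Delta(j));$$
since $j-(i+1)=m-1\ge 1$, the right-hand side is nonzero by the inductive hypothesis, which closes the induction.

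For the costandard modules I would invoke the duality $\star$. Since $\star$ is simple-preserving, it interchanges standard and costandard modules --- this can be seen directly from the Loewy diagrams, as dualizing $\Delta(k)$ (top $L(k)$, socle $L(k-1)$) produces $\nabla(k)$, or cited as the general behaviour of a simple-preserving duality on a quasi-hereditary algebra. As a contravariant self-equivalence of $\Lambda_n\operatorname{-mod}$, it therefore induces
$$\operatorname{Ext}^m_{\Lambda_n}(\Delta(i),\Delta(j)) \cong \operatorname{Ext}^m_{\Lambda_n}\!\left(\Delta(j)^\star,\Delta(i)^\star\right) \cong \operatorname{Ext}^m_{\Lambda_n}(\nabla(j),\nabla(i)),$$
and the second non-vanishing follows from the first.

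I do not anticipate a real obstacle; the argument is essentially bookkeeping with syzygies. The one spot that needs attention is the base case: in degree one the long exact sequence only gives a surjection $\operatorname{Hom}_{\Lambda_n}(\Delta(i+1),\Delta(i+1))\twoheadrightarrow\operatorname{Ext}^1_{\Lambda_n}(\Delta(i),\Delta(i+1))$, not an isomorphism, so the non-vanishing genuinely relies on the non-splitness asserted in Lemma~\ref{lemma: short exact sequences} rather than on a naive $\operatorname{Hom}$-computation.
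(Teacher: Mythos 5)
Your argument is correct and essentially the same as the paper's: both reduce $\operatorname{Ext}_{\Lambda_n}^{j-i}(\Delta(i),\Delta(j))$ by dimension shifting along the short exact sequences of Lemma~\ref{lemma: short exact sequences} to the non-split degree-one extension $\Delta(i+1)\hookrightarrow P(i)\tto\Delta(i)$, and both deduce the costandard statement from the standard one via the simple-preserving duality. The only (cosmetic) difference is the direction of the shift: you raise the first argument using $\Omega\Delta(i)\cong\Delta(i+1)$ and projectivity of $P(i)$, while the paper lowers the second argument.
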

	\begin{proof}
		Using the simple-preserving duality, the first inequality implies the second. By applying the functor $\operatorname{Hom}_{\Lambda_n}(\Delta(i),\blank)$ to the short exact sequence
		$$\Delta(j) \hookrightarrow P(j) \tto \Delta(j-1),$$
		and considering the resulting long exact sequence, we get $\operatorname{Ext}_{\Lambda_n}^{j-i}(\Delta(i),\Delta(j))\cong \operatorname{Ext}_{\Lambda_n}^{j-i-1}(\Delta(i),\Delta(j-1))$. Repeating this argument, we get $$\operatorname{Ext}_{\Lambda_n}^{j-i}(\Delta(i),\Delta(j))\cong \operatorname{Ext}_{\Lambda_n}^{j-i-1}(\Delta(i),\Delta(j-1))\cong \dots \cong \operatorname{Ext}_{\Lambda_n}^1(\Delta(i),\Delta(i+1))\neq 0,$$
		where, in the last step, we use that the short exact sequences in Lemma \ref{lemma: short exact sequences} are non-split.
	\end{proof}
	
	\section{Indecomposable $\Lambda_n$-modules}
	In order to classify indecomposable $\Lambda_n$-modules, we use the fact that the algebra $\Lambda_n$ is a string algebra. For these algebras, the classification is known. 
	\begin{definition}\cite{WaldWasch}
		Let $\Lambda=\faktor{\Bbbk Q}{I}$ be the quotient of the path algebra of the quiver $Q=(Q_0,Q_1)$ by some admissible ideal $I$. For an arrow $\alpha\in Q_1$, denote by $s(\alpha)$ and $t(\alpha)$ the source and target vertex of $\alpha$, respectively.  Then $\Lambda$ is called \emph{special biserial} if the following hold.
		
		\begin{enumerate}[(SB1)]
			\item For each vertex $i$, there are at most two arrows with $i$ as its source, and at most two arrows with $i$ as its target.
			\item For $\alpha, \beta,\gamma \in Q_1$ such that $t(\alpha)=t(\beta)=s(\gamma)$ and $\alpha\neq \beta$, we have $\gamma \alpha \in I$ or $\gamma\beta \in I$.
			\item For $\alpha,\beta,\gamma \in Q_1$ such that $s(\alpha)=s(\beta)=t(\gamma)$ and $\alpha\neq\beta$ we have $\alpha\gamma\in I$ or $\beta\gamma \in I.$
		\end{enumerate}
		If, in addition, the ideal $I$ is generated by zero relations, $\Lambda$ is called a \emph{string algebra}.
	\end{definition}
	We immediately note that $\Lambda_n$ is a string algebra for all $n\in \mathbb{Z}_{>1}$. For special biserial algebras and string algebras the classification of indecomposable modules is known, see \cite{BR87, WaldWasch}. There exist two classes of indecomposable modules, the so-called string modules and band modules. We will show that in the case of $\Lambda_n$, there are no band modules and therefore a complete list of the indecomposable $\Lambda_n$-modules is given by the string modules.
	
	We follow closely the notation of \cite{WaldWasch}. Let $L=(L_0,L_1)$ denote the quiver
	
	$$L=\xymatrix{
		1 \ar@{-}[r]^-{a_1} & 2 \ar@{-}[r]^-{a_2}& \dots \ar@{-}[r]^-{a_{r-1}} & r \ar@{-}[r]^-{a_r} & r+1
	},\quad r\geq 0$$
	where the $a_i$ are arrows with either orientation. Define a map $\varepsilon:L_1\to \{-1,1\}$ by

	$$\varepsilon(a_i)=\left\{\begin{array}{r l}
		1, & \text{if } a_i:i\to i+1; \\
		-1, & \text{if } a_i :i+1\to i.
	\end{array}\right.$$
	Similarly, we denote by $Z=(Z_0,Z_1)$ the quiver
	$$Z=\xymatrix{
		\overline{1} \ar@/_2pc/@{-}[rrr]_-{b_{\overline{r}}} \ar@{-}[r]^-{b_{\overline{1}}} & \overline{2} \ar@{-}[r]^-{b_{\overline{2}}}& \dots \ar@{-}[r]^-{b_{{\overline{r-1}}}} & \overline{r}
	},\quad r\geq 2$$
	where the $b_{\overline{i}}$ are arrows with either orientation and where $\overline{i}$ denotes the congruence class of $i$ modulo $r$. Again, define $\varepsilon: Z_1\to \{-1,1\}$ by 
	$$\varepsilon(b_{\overline{i}})=\left\{\begin{array}{r l}
		1, & \text{if } b_{\overline{i}}: \overline{i}\to \overline{i+1};\\
		-1, & \text{if } b_{\overline{i}}: \overline{i+1}\to \overline{i}.
	\end{array}\right.$$
	Let $Q$ be some quiver. A quiver homomorphism $w:L\to Q$  is called a \emph{walk} of length $r$ in $Q$. A walk is called a \emph{path} if $\varepsilon(a_i)=1$ for all $i$. Similarly, a homomorphism $u:Z\to Q$ is called a \emph{tour} in $Q$. A tour is called a \emph{circuit} if $\varepsilon(b_{\overline{i}})=1$ for all $i$. The restriction of $v$ (or $u$) to a connected linear subquiver $L^\prime$ of $L$ (or $Z$) is called a \emph{subwalk} or a \emph{subpath} (or, a \emph{subtour} or \emph{subcircuit}).
	
	\begin{definition}\cite{WaldWasch}
		Fix a quiver $Q$ and an admissible ideal $I\subset \Bbbk Q$. A walk $v:L\to Q$ is called a \emph{$V$-sequence} if the following hold.
		
		\begin{enumerate}[(VS1)]
			\item Each subpath of $v$ does not belong to $I$.
			\item If $\varepsilon(a_i)\neq \varepsilon(a_{i+1})$, then $v(a_i)\neq v(a_{i+1})$.
		\end{enumerate}
		Similarly, a tour $u:Z\to Q$ is called a \emph{primitive $V$-sequence} if the following hold.
		\begin{enumerate}[(VS1)]
			\setcounter{enumi}{2}
			\item The tour $u$ is not a circuit and each subpath of $u$ does not belong to $I$.
			\item If $\varepsilon(b_i)\neq\varepsilon(b_{\overline{i+1}})$, then $u(b_{\overline{i}})\neq u(b_{\overline{i+1}})$.
			\item There is no automorphism $\sigma\neq \operatorname{id}$ of $Z$, permuting the vertices cyclically such that $u=u\circ \sigma$.
		\end{enumerate}
	\end{definition}
	In \cite{WaldWasch}, the authors show that we can obtain all indecomposable modules over a special biserial algebra from $V$-sequences and primitive $V$-sequences. These correspond exactly to the string and band modules, respectively. In Proposition \ref{proposition:V-sequences - no primitive V-sequences}, we will see that there are no primitive $V$-sequences $u:Z\to Q$, and consequently, no band modules. To obtain an indecomposable module from a $V$-sequence $v:L\to Q$, consider the following representation of the bound quiver $(Q,I)$. At each vertex $x\in Q_0$, we put the vector space $\Bbbk$ if $x$ is in the image of $v$, and the zero space otherwise. At each arrow $\xymatrixcolsep{0.3cm}\xymatrix{ x \ar[r]^-{\alpha} & y}\in Q_1$, we put the identity map on the vector space $\Bbbk$ if $\alpha$ is in the image of $v$, and the zero map otherwise. This representation is then equivalent to a $\Bbbk Q/I$-module.
	
	However, an indecomposable module $M$ does not arise from a unique $V$-sequence. In fact, the indecomposable module $M$ corresponding to a $V$-sequence $v: L\to Q$ is isomorphic to the indecomposable module $M^\prime$ corresponding to a $V$-sequence $v^\prime: L^\prime \to Q$ if and only if there is a quiver isomorphism $\sigma: L^\prime \to L$ such that $v^\prime = v \circ \sigma$. In this situation, we say that the $V$-sequences $v$ and $v^\prime$ are isomorphic. There are only two possibilities for such an isomorphism $\sigma$. The first possibility is that $L=L^\prime$ and $\sigma=\operatorname{id}$. The second possibility is that $\sigma$ acts on the vertices of $L^\prime$ by $\sigma(i)=r+2-i$, that is, $\sigma$ swaps the vertex $1$ and the vertex $r+1$, the vertex $2$ and the vertex $r$, and so on. Here, we must have an arrow $\xymatrixcolsep{0.5cm}\xymatrix{i\ar[r]^-{\alpha^\prime} & j}$ in $L^\prime$ if and only if we have an arrow $\xymatrixcolsep{0.5cm}\xymatrix{ r+2-i \ar[r]^-{\alpha} & r+2-j}$ in $L$. The corresponding $V$-sequence $v^\prime:L^\prime\to Q$ must then be given by $v^\prime(i)=v(r+2-i)$. 
	
	\begin{example}
		Let $L$ be the quiver $\xymatrixcolsep{0.5cm}\xymatrix{1 \ar[r]^-\alpha & 2 & 3 \ar[r]^-\gamma \ar[l]_-\beta & 4}$. Then $L^\prime$ as described above is the quiver $\xymatrixcolsep{0.5cm}\xymatrix{1 & 2 \ar[l]_-{\gamma^\prime} \ar[r]^-{\beta^\prime} & 3 & \ar[l]_-{\alpha^\prime}4}.$ In a picture, the isomorphism $\sigma:L^\prime\to L$ is as follows.
		$$\xymatrix{1 \ar@{-->}[d]_-{\sigma} & 2 \ar[l]_-{\gamma^\prime} \ar@{-->}[d]_-{\sigma} \ar[r]^-{\beta^\prime} & 3 \ar@{-->}[d]_-{\sigma} & \ar@{-->}[d]_-{\sigma}\ar[l]_-{\alpha^\prime}4 \\
			4 \ar@{|->}[d] & \ar[l]_-{\gamma} \ar[r]^-\beta3 \ar@{|->}[d]& \ar@{|->}[d]2 & \ar[l]_-{\alpha}1 \ar@{|->}[d] \\
			v(4)=v^\prime(1) & v(3)=v^\prime(2) & v(2)=v^\prime(3) & v(1)=v^\prime(4)}$$
	\end{example}
	
	\begin{proposition}\label{proposition:V-sequences - no primitive V-sequences}
		Let $Q$ be the quiver of $\Lambda_n$ and let $I\subset \Bbbk Q$ be the ideal generated by the relations in Section 1. Then, there are no primitive $V$-sequences $u:Z\to Q$. Moreover, the only $V$-sequences $v:L\to Q$, up to isomorphism, are of one of the following forms.
		
		\begin{enumerate}[(a)]
			\item We have $\varepsilon(a_i)\neq \varepsilon(a_{i+1})$ and $v(i+1)=v(i)+1$ for all $i$. The $V$-sequence is given by the following picture.
			$$\xymatrix{
				1 \ar@{-->}[d]_-{v} \ar@{-}[r]^-{a_1} & 2 \ar@{-->}[d]_-{v}\ar@{-}[r]^-{a_2}& \dots \ar@{-}[r]^-{a_{r-1}} & r\ar@{-->}[d]_-{v} \ar@{-}[r]^-{a_r} & r+1 \ar@{-->}[d]_-{v} \\
				v(1) \ar@{-}[r]_-{v(a_1)} & v(1)+1 \ar@{-}[r]_-{v(a_2)}& \dots  \ar@{-}[r]_-{v(a_{r-1})} & v(1)+r-1 \ar@{-}[r]_-{v(a_r)} & v(1)+r
			}$$
			\item We have $\varepsilon(a_{s})=\varepsilon(a_{s+1})=1$ for some $s$, $\varepsilon(a_i)\neq \varepsilon(a_{i+1})$ for all $i\neq s$, $v(i+1)=v(i)+1$ for $i\leq s$ and $v(i+1)=v(i)-1$ for $i> s$. The $V$-sequence is given by the following picture.
			$$\xymatrix{
				1 \ar@{-->}[d]_-{v} \ar@{-}[r]^-{a_1} &  \dots \ar@{-}[r]^-{a_{s-1}} & s \ar@{-->}[d]_-{v}\ar@{-}[r]^-{a_{s}} &\ar@{-->}[d]_-{v} s+1 \ar@{-}[r]^-{a_{s+1}}  & \ar@{-->}[d]_-{v}s+2 \ar@{-}[r]^-{a_{s+2}} & \dots \ar@{-}[r]^-{a_{r}}& \ar@{-->}[d]_-{v}r+1 \\
				v(1) \ar@{-}[r]^-{a_1} &  \dots \ar@{-}[r]^-{a_{s-1}} & v(1)+s-1 \ar@{-}[r]^-{a_{s}} & v(1)+s \ar@{-}[r]^-{a_{s+1}}  & v(1)+s-1 \ar@{-}[r]^-{a_{s+2}} & \dots \ar@{-}[r]^-{a_{r}}& v(1)+2s-r
			}$$
		\end{enumerate}
	\end{proposition}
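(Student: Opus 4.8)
The plan is to work purely combinatorially with (primitive) $V$-sequences. First I would record the elementary point that every arrow of $Q$ joins two consecutive vertices, so for any walk $v\colon L\to Q$ (and any tour) one has $\lvert v(i+1)-v(i)\rvert=1$ for all $i$; thus $v$ is encoded by a \emph{lattice path} $v(1),v(2),\dots,v(r+1)$ in $\{1,\dots,n\}$ together with, for each step, a choice of one of the two arrows $\alpha_k,\alpha_k'$ joining the two consecutive vertices $k,k+1$ involved, the sign $\varepsilon(a_i)$ being $+1$ precisely when $v$ traverses the chosen arrow in its orientation in $Q$. Next I would classify the length-two paths in $Q$: through any vertex there are at most four (at most two incoming, at most two outgoing arrows), and comparing them to the three families of relations shows that the \emph{only} length-two path outside $I$ is, for $2\le k\le n$, the ``peak'' path $k-1\xrightarrow{\alpha_{k-1}}k\xrightarrow{\alpha_{k-1}'}k-1$ (go up along an $\alpha$-arrow, then back down along an $\alpha'$-arrow). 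Since $I$ is generated by these length-two relations, conditions (VS1) and (VS3) are equivalent to requiring that every monotone length-two sub-piece of the lattice path be such a peak.

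The heart of the argument is then a ``no valleys'' observation: neither a $V$-sequence nor a primitive $V$-sequence can have an interior local minimum. Indeed, at a putative valley $v(m-1)=v(m+1)=v(m)+1$ the edges $v(a_{m-1}),v(a_m)$ both join $v(m)$ and $v(m)+1$. If $\varepsilon(a_{m-1})=\varepsilon(a_m)$, the corresponding monotone length-two sub-piece is the valley path $(v(m)+1)\to v(m)\to(v(m)+1)$, which lies in $I$, contradicting (VS1)/(VS3). If $\varepsilon(a_{m-1})\neq\varepsilon(a_m)$, then (VS2)/(VS4) forces $v(a_{m-1})\neq v(a_m)$, hence $\{v(a_{m-1}),v(a_m)\}=\{\alpha_{v(m)},\alpha_{v(m)}'\}$; but inspecting the two cases shows $\varepsilon$ then takes the \emph{same} value on both edges, a contradiction. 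From this the ``no primitive $V$-sequences'' statement is immediate: a tour gives a non-constant closed lattice path, whose minimum, attained at some vertex of the cycle $Z$ (which has no endpoints), is an interior valley --- impossible. The same observation forces every $V$-sequence to be \emph{unimodal}: $v(1)<\cdots<v(p)>\cdots>v(r+1)$ for a unique peak position $p$, allowing $p=1$ or $p=r+1$ (the monotone cases). Using the reversal isomorphism $\sigma(i)=r+2-i$ of $V$-sequences, the monotone-decreasing case is identified with the monotone-increasing one.

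It then remains to pin down $\varepsilon$ in the two surviving shapes. For a monotone-increasing path, a length-two piece at positions $(i,i+1)$ reads $v(i)\to v(i)+1\to v(i)+2$; if $\varepsilon(a_i)=\varepsilon(a_{i+1})=+1$ it is the path $\alpha_{v(i)+1}\alpha_{v(i)}\in I$, and if both signs are $-1$ the reversed path is a product of two $\alpha'$-arrows, again in $I$. So (VS1) forces $\varepsilon$ to alternate strictly, and (VS2) is automatic since consecutive edges meet distinct pairs of vertices; this is form (a). For a one-peak path with peak at $p=s+1$, the same reasoning forces strict alternation of $\varepsilon$ at every index other than $s$. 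At the peak one checks, exactly as in the valley analysis, that $\varepsilon(a_s)\neq\varepsilon(a_{s+1})$ would force $v(a_s)=v(a_{s+1})$, violating (VS2); hence $\varepsilon(a_s)=\varepsilon(a_{s+1})$, in which case the corresponding length-two piece is automatically the ``peak'' path (so (VS1) holds), with $v(a_s)=\alpha_{v(s)}$ and $v(a_{s+1})=\alpha_{v(s)}'$. After possibly applying the reversal isomorphism this common sign is $+1$; propagating the forced alternation outward from $s$ determines $\varepsilon$ completely and makes (VS2) automatic at the remaining mixed spots. This is form (b), and the two cases together exhaust the $V$-sequences.

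The computations I am passing over are the small four-case bookkeeping relating (i) the direction in which the lattice path moves at a given step, (ii) which of the parallel arrows $\alpha_k,\alpha_k'$ is used there, and (iii) the resulting sign $\varepsilon(a_i)$; this is elementary but is where all the case distinctions sit, and I expect it to be the main (if mild) obstacle. One further point deserving a line or two is the behaviour at the boundary vertices $1$ and $n$: there one arrow in each direction is missing, which only tightens the constraints, and it suffices to note that the valley path through vertex $1$ still lies in $I$ while the unique length-two path through vertex $n$ is a peak path, so the interior analysis applies verbatim.
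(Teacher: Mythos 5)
Your proof is correct and follows essentially the same route as the paper's: both rest on the local analysis of two consecutive arrows, showing that a directed length-two sub-piece must map to the unique ``peak'' path $k-1\to k\to k-1$ outside $I$ and that valleys are impossible (whence no band modules and unimodality of the lattice path), the paper organizing this as an explicit four-case analysis of the sign pattern $(\varepsilon(a_i),\varepsilon(a_{i+1}))$ rather than as a separate no-valley lemma. Your packaging is a clean reformulation, not a different argument.
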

	\begin{proof}
		Let $v: L\to Q$ be a $V$-sequence, where $v(i)=j$, and consider the subquiver
		$$\xymatrix{
			i\ar@{-}[r]^-{a_i}	& i+1 \ar@{-}[r]^-{a_{i+1}} & i+2
		}$$
		of $L$. There are four cases, depending on the values of $\varepsilon(a_i)$ and $\varepsilon(a_{i+1})$.
		\begin{enumerate}[(I)]
			\item Assume that $\varepsilon(a_i)=\varepsilon(a_{i+1})=1.$ Then, our subquiver looks like $\xymatrix{i\ar[r]^-{a_i} & i+1\ar[r]^-{a_{i+1}} & i+2}$. By assumption, $v(a_i)$ is an arrow starting in $j$, so that $v(a_i)=\alpha_j$ or $v(a_i)=\alpha_{j-1}^\prime$. If $v(a_i)=\alpha_{j-1}^\prime$, then, we immediately get that 
			$$v(a_{i+1})=\alpha_{j-1}\quad \text{or}\quad v(a_{i+1})=\alpha_{j-2}^\prime,$$
			and in both cases, $v(a_{i+1})v(a_i)=0$, contradicting (VS1).
			
			If $v(a_i)=\alpha_j$, then, we have
			$$v(a_{i+1})=\alpha_{j+1}\quad\text{or}\quad v(a_{i+1})=\alpha_j^\prime.$$
			If $v(a_{i+1})=\alpha_{j+1}$, we again contradict (VS1). Therefore, we conclude that the subquiver $$\xymatrix{i\ar[r]^-{a_i} & i+1\ar[r]^-{a_{i+1}} & i+2}$$
			is mapped to the following subquiver.
			$$\xymatrixrowsep{0.5cm}\xymatrixcolsep{0.2cm}\xymatrix{
				v(i)=j \ar[rd]^-{\alpha_j} \\
				& v(i+1)=j+1 \ar[ld]^-{\alpha_j^\prime }\\
				v(i+2)=j
			}$$
			\item Assume that $\varepsilon(a_i)=\varepsilon(a_{i+1})=-1$. Then, our subquiver looks like $\xymatrix{ i & i+1 \ar[l]_-{a_i} & i+2 \ar[l]_-{a_{i+1}}}$. A similar argument as in the previous case shows that this is mapped to the following subquiver.
			$$\xymatrixrowsep{0.5cm}\xymatrixcolsep{0.2cm}\xymatrix{
				v(i+2)=j \ar[rd]^-{\alpha_j} \\
				& v(i+1)=j+1 \ar[ld]^-{\alpha_j^\prime }\\
				v(i)=j
			}$$
			\item Assume that $\varepsilon(a_i)=1$ and $\varepsilon(a_{i+1})=-1$. Then, our subquiver looks like $\xymatrix{i\ar[r]^-{a_i} & i+1 & i+2 \ar[l]_-{a_{i+1}}}$. Again, we have $v(a_i)=\alpha_j$ or $v(a_i)=\alpha_{j-1}^\prime$. If $v(a_i)=\alpha_j$,  we have $v(a_{i+1})=\alpha_{j+1}^\prime$ since, by (VS2), $v(a_i)\neq v(a_{i+1})$, and we get the following picture.
			$$\xymatrixrowsep{0.2cm}\xymatrixcolsep{0.5cm}\xymatrix{
				v(i)=j \ar[rd]^-{\alpha_j} & & v(i+2)=j+2 \ar[ld]_-{\alpha_{j+1}^\prime} \\
				& v(i+1)=j+1	
			}$$
			If, instead, $v(a_i)=\alpha_{j-1}^\prime$, we get $v(a_{i+1})=\alpha_{j-2}$ and the following picture.
			$$\xymatrixrowsep{0.2cm}\xymatrixcolsep{0.5cm}\xymatrix{
				v(i+2)=j-2 \ar[rd]^-{\alpha_{j-2}} & & v(i)=j \ar[ld]_-{\alpha_{j-1}^\prime} \\
				& v(i+1)=j-1	
			}$$
			\item Assume that $\varepsilon(a_i)=-1$ and $\varepsilon(a_{i+1})=1$. Then, our subquiver looks like $\xymatrix{i & i+1 \ar[l]_-{a_i} \ar[r]^-{a_{i+1}}& i+2}$. By similar arguments as in the previous case, we get one of the following two possible pictures.
			
			$$\xymatrixrowsep{0.5cm}\xymatrixcolsep{0.5cm}\xymatrix{
				& v(i+1)=j+1 \ar[rd]^-{\alpha_{j+1}} \ar[ld]_-{\alpha_j^\prime} \\
				v(i)=j & & v(i+2)=j+2	\\
				& v(i+1)=j-1 \ar[rd]^-{\alpha_{j-1}} \ar[ld]_-{\alpha_{j-2}^\prime} \\
				v(i+2)=j-2 & & v(i)=j
			}$$
		\end{enumerate}
		
		Let $u: Z\to Q$ be a primitive $V$-sequence and let $\overline{a}$ be such that $u(\overline{a})=j\leq u(\overline{s})$ for all $s\in Z_0$. It follows that $u(\overline{a+1})=j+1=u(\overline{a-1})$. Then, to not contradict (VS4), the subquiver
		$$\xymatrix{ \overline{a-1} \ar@{-}[r] & \overline{a} \ar@{-}[r] & \overline{a+1}}$$
		of $L$ is mapped to the subquiver
		$$\xymatrixrowsep{0.5cm}\xymatrixcolsep{0.2cm}\xymatrix{
			& j+1 \ar[ld]_-{\alpha_j^\prime}\\
			j \ar[rd]_-{\alpha_j} \\
			& j+1
		}$$
		of $Q$. But this configuration contradicts (VS3). We conclude that there are no primitive $V$-sequences.
		
		Let $v:L\to Q$ be a $V$-sequence. From (I)-(IV), we know that if $v(i+1)=v(i)-1$, then $v(i+2)=v(i)-2$, $v(i+3)=v(i)-3$ and so on. In this case $\varepsilon(a_s)\neq\varepsilon(a_{s+1})$ for all $s\geq i$. In particular, $\varepsilon(a_s)=\varepsilon(a_{s+1})$ can occur at most once in a $V$-sequence. There are two cases.
		
		\begin{enumerate}[(a)]
			\item We have $\varepsilon(a_i)\neq \varepsilon(a_{i+1})$, for all $i$. Then, either $v(i+1)=v(i)+1$ or $v(i+1)=v(i)-1$, for all $i$. However, any $V$-sequence of the latter type is isomorphic to one of the former type. This situation corresponds to part (a) of the statement of the proposition.

			\item We have $\varepsilon(a_{s})=\varepsilon(a_{s+1})$, for some $s$, $\varepsilon(a_i)\neq \varepsilon(a_{i+1})$, for all $i\neq s$, $v(i+1)=v(i)+1$, for $i\leq s$ and $v(i+1)=v(i)-1$, for $i> s$. Then, either $\varepsilon(a_s)=\varepsilon(a_{s+1})=1$, or $\varepsilon(a_s)=\varepsilon(a_{s+1})=-1$.  However, any $V$-sequence of the latter type is isomorphic to one of the former type. This situation corresponds to part (b) of the statement of the proposition. \qedhere
		\end{enumerate}
	\end{proof}

	\begin{definition}
		Define $\Omega(i,j,k)$, where $i,j\leq k$, to be the (up to isomorphism unique) indecomposable $\Lambda_n$-module with the following Loewy diagram.
		\begin{enumerate}[(a)]
			\item If $k\equiv i \mod 2$ and $k\equiv j\mod 2$:
			$$ \xymatrixrowsep{0.1cm} \xymatrixcolsep{0.5cm} \xymatrix{
				& i+1 \ar[dl] \ar[dr]& & & k-1 \ar[dl] \ar[ddr] \\
				i & & \ar@{}[r]|{\textstyle\dots} & & & \\
				& & & & & k \ar[ddl]\\
				j \ar[rd]& & \ar[dl] \ar@{}[r]|{\textstyle\dots} & \ar[rd] \\
				& j+1 & & & k-1
			}$$
			\item If $k\equiv i \mod 2$ and $k\not\equiv j \mod 2$:
			$$ \xymatrixrowsep{0.1cm} \xymatrixcolsep{0.5cm} \xymatrix{
				& i+1 \ar[dl] \ar[dr]& & & k-1 \ar[dl] \ar[ddr] \\
				i & & \ar@{}[r]|{\textstyle\dots} &  & & \\
				& & & & & k \ar[ddl]\\
				&  \ar[ld]j+1 \ar[rd]& & \ar[dl] \dots \ar[dr]\\
				j& & j+2 &  & k-1
			}$$
			\item If $k\not \equiv i \mod 2$ and $k\equiv j\mod 2$:
			$$ \xymatrixrowsep{0.1cm} \xymatrixcolsep{0.5cm} \xymatrix{
				i \ar[dr]& & \ar[dl] i+2 \ar[dr] && k-1 \ar[dl] \ar[ddr] \\
				& i+1& & \dots & & \\
				& & & & & k \ar[ddl]\\
				j \ar[rd]& & \ar[dl] \ar@{}[r]|{\textstyle\dots} & \ar[dr] \\
				& j+1 & & & k-1
			}$$
			\item If $k\not \equiv i \mod 2$ and $k\not\equiv j\mod 2$:
			$$ \xymatrixrowsep{0.1cm} \xymatrixcolsep{0.5cm} \xymatrix{
				i \ar[dr]& & \ar[dl] i+2 \ar[dr] & & k-1 \ar[dl] \ar[ddr] \\
				& i+1 & & \dots & & \\
				& & & & & k \ar[ddl]\\
				&  \ar[ld]j+1 \ar[rd] &&\ar[dl]  \dots \ar[dr]\\
				j& & j+2 &  & k-1
			}$$
		\end{enumerate}
		Note that, in case (a) and (b), we may have $i=k$, and, in case (a) and (c), we may have $j=k$. For all $i,j,k$, the simple preserving duality maps the module $\Omega(i,j,k)$ to the module $\Omega(j,i,k)$.
	\end{definition}

	\begin{proposition}
		The set $\{\Omega(i,j,k) \mid 1\leq i,j\leq k\leq n\}$
		is a complete and irredundant list of isomorphism classes of indecomposable $\Lambda_n$-modules. There are, in total, $\frac{n(n+1)(2n+1)}{6}$ isomorphism classes of indecomposable $\Lambda_n$-modules.
	\end{proposition}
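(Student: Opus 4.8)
The proof combines the classification of indecomposable modules over string algebras (\cite{BR87,WaldWasch}) with the structural analysis carried out in Proposition~\ref{proposition:V-sequences - no primitive V-sequences}. Since $\Lambda_n$ is a string algebra, every indecomposable $\Lambda_n$-module is a string module or a band module; by Proposition~\ref{proposition:V-sequences - no primitive V-sequences} there are no primitive $V$-sequences, hence no band modules, so each indecomposable is the string module of a $V$-sequence $v\colon L\to Q$, unique up to the order-reversing isomorphism of $L$ described above. Proposition~\ref{proposition:V-sequences - no primitive V-sequences} further tells us that, up to that isomorphism, the only $V$-sequences are those of the two forms (a) and (b) listed there. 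So it suffices to set up a bijection between these $V$-sequences and the triples $(i,j,k)$ with $1\le i,j\le k\le n$, under which the string module attached to a $V$-sequence is the module $\Omega(i,j,k)$ just defined; completeness and irredundancy of the list then follow.

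To produce this bijection I would read off, from the local analysis (I)--(IV) in the proof of Proposition~\ref{proposition:V-sequences - no primitive V-sequences}, the exact string module associated with each form. A form-(a) $V$-sequence is a monotone zig-zag on a block of consecutive vertices $\{p,p+1,\dots,q\}$; a short computation (keeping track of whether $\alpha_p$ or $\alpha_p'$ sits at the $p$-end, which also fixes all remaining arrows) identifies the resulting module with one of the two degenerate $\Omega$-modules $\Omega(p,q,q)$ or $\Omega(q,p,q)$ (those with $i=k$ or $j=k$ in the definition), and the one-vertex case $p=q$ gives $L(p)=\Omega(p,p,p)$. A form-(b) $V$-sequence has a single peak vertex $k$, reached by increasing unit steps from one end $i$ and left by decreasing unit steps to the other end $j$; the relations $\alpha_{i+1}\alpha_i=\alpha_j'\alpha_{j+1}'=\alpha_i\alpha_i'=0$ force the direction of every arrow of the string, and comparing the resulting Loewy diagram with the four cases (a)--(d) in the definition of $\Omega$ -- which are precisely the four possibilities for the parities of $k-i$ and $k-j$ -- identifies the module with $\Omega(i,j,k)$, now with $i<k$ and $j<k$. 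Running these identifications backwards shows that every triple $(i,j,k)$ with $1\le i,j\le k\le n$ is realized, so the list is complete and each $\Omega(i,j,k)$ exists.

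For irredundancy I would recover $(i,j,k)$ intrinsically from $M=\Omega(i,j,k)$. The integer $k$ is the largest vertex in the support of $M$ (equivalently, the largest $m$ with $L(m)$ a composition factor of $M$). The unordered pair $\{i,j\}$ is the pair of endpoints of the string of $M$, which is an isomorphism invariant because the string of a string module is determined up to reversal. Finally $i$ and $j$ are told apart by the orientation of the string at the peak $k$: on the $i$-side the arrow of the string incident to $k$ points into $k$, whereas on the $j$-side it points out of $k$ (in the degenerate cases one of these sides is empty); alternatively, the simple-preserving duality sends $\Omega(i,j,k)$ to $\Omega(j,i,k)$ and fixes it only when $i=j$. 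Hence distinct triples give pairwise non-isomorphic modules, so the list is irredundant. The count is then immediate: the number of triples $(i,j,k)$ with $1\le i,j\le k\le n$ equals $\sum_{k=1}^{n}k^{2}=\tfrac{n(n+1)(2n+1)}{6}$.

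The step I expect to be the actual work is the second paragraph: extracting from the picture-heavy case analysis of Proposition~\ref{proposition:V-sequences - no primitive V-sequences} the precise string module -- including the direction of every arrow, which the zero relations pin down uniquely -- and verifying, case by case on the parities of $k-i$ and $k-j$ and on the degenerations, that its Loewy diagram is exactly the one prescribed for $\Omega(i,j,k)$; the small-$n$ boundaries and the cases $i=k$, $j=k$ and $i=j=k$ need to be handled separately. Distinguishing $i$ from $j$ in the last paragraph is a minor additional point.
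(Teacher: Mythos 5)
Your proposal is correct and follows essentially the same route as the paper: rule out band modules via the $V$-sequence analysis, match the form-(a) and form-(b) $V$-sequences with the modules $\Omega(i,j,k)$, and count the triples. The only addition is your explicit irredundancy argument (recovering $(i,j,k)$ from the module), which the paper simply asserts; this is a reasonable, if minor, elaboration.
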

	\begin{proof}
		By \cite{WaldWasch}, all indecomposable $\Lambda_n$-modules arise from $V$-sequences or primitive $V$-sequences. Using Proposition \ref{proposition:V-sequences - no primitive V-sequences} we know that there are no primitive $V$-sequences, and what the possible $V$-sequences look like. Let $v:L\rightarrow Q$ be a $V$-sequence, of the form (a) in Proposition \ref{proposition:V-sequences - no primitive V-sequences}, such that $v(1)=i$. If $\varepsilon(a_r)=1$, then the indecomposable module corresponding to $v$ is $\Omega(i,i+r, i+r)$. If instead $\varepsilon(a_r)=-1$, then the indecomposable module corresponding to $v$ is $\Omega(i+r,i, i+r)$.
		
		Let $v:L\rightarrow Q$ be a $V$-sequence, of the form (b) in Proposition \ref{proposition:V-sequences - no primitive V-sequences}, such that $v(1)=i$. Then the indecomposable module corresponding to $v$ is $\Omega(i,i+2s-r, i+s)$. Thus, we see that any $V$-sequence gives rise to a (unique) module of the form $\Omega(i,j,k)$, and that any such module may be obtained from a $V$-sequence. This proves the first part of the proposition.
		
		Every choice of $i$, $j$ and $k$ such that $1\leq i,j\leq k\leq n$ yields a unique module $\Omega(i,j,k)$. For a fixed $k$, there are $k$ choices of $i$, and $k$ choices of $j$, which implies that there are $k^2$ non-isomorphic modules $\Omega(i,j,k)$, with $k$ fixed. The total number of non-isomorphic indecomposable $\Lambda_n$-modules is therefore
		\begin{align*}
			\sum_{k=1}^n k^2 = \frac{n(n+1)(2n+1)}{6}.
		\end{align*} 
	\end{proof}

	For any subset $X\subseteq \{1,2,\dots, n\}$, we denote by $P_X$ the direct sum 
	$$P_X=\medoplus_{i\in X}P(i).$$
	Should $X$ be the empty set, we define $P_\emptyset:=0$.
	We use similar notation for such direct sums of other structural modules. For $a,b\in\{1,\dots, n\}$, with $a\leq b$, we fix the following notation.
	\begin{enumerate}[$\bullet$]
		\item When $a\equiv b\mod 2$, we put $[a,b]=\{a, a+2, \dots, b-2, b\}.$
		\item We put $[a, b)=\{c \in \{1,\dots, n\} \mid  a\leq c\leq b \text{ and }c\equiv a\mod 2\}$.
		\item We put $(a, b]=\{c \in \{1,\dots, n\} \mid a\leq c\leq b \text{ and }c\equiv b\mod 2\}$.
	\end{enumerate}
	For example,
	$$P_{[3,8)}=P(3)\oplus P(5)\oplus P(7)\quad \text{and}\quad P_{(3,8]}=P(4)\oplus P(6)\oplus P(8).$$
	Note that, if $a\equiv b\mod 2$, then $[a,b]=[a,b)=(a,b]$.

	\begin{definition}
		Define the \emph{upper arm} of the indecomposable $\Lambda_n$-module $M=\Omega(i,j,k)$, denoted by $\operatorname{upp}(M)$, as the quotient
		
		$$\operatorname{upp}(M)=\faktor{\Omega(i,j,k)}{\Omega(k-1, j, k-1)}\cong \Omega(i,k,k).$$
		
		Similarly, define the \emph{lower arm} of $M=\Omega(i,j,k)$, denoted by $\operatorname{low}(M)$, as the submodule
		
		$$\operatorname{low}(M)=\Omega(k,j,k) \subset \Omega(i,j,k).$$
	\end{definition}
	\begin{example}
		We draw the Loewy diagram of $M=\Omega(2,3,6):$ 
		$$ \xymatrixrowsep{0.1cm} \xymatrixcolsep{0.5cm} \xymatrix{
			& 3 \ar[dl] \ar[dr]& & 5 \ar[dl] \ar[ddr] \\
			2 & & 4 & & \\
			& & & & 6 \ar[ddl]\\
			& & 4 \ar[rd] \ar[dl] \\
			& 3 & & 5
		}$$
		Then, the Loewy diagrams of $\operatorname{upp}(2,3,6)$ and $\operatorname{low}(2,3,6)$ are
		$$\operatorname{upp}(M): \vcenter{\xymatrixrowsep{0.1cm} \xymatrixcolsep{0.5cm} \xymatrix{
				& 3 \ar[dl] \ar[dr]& & 5 \ar[dl] \ar[dr] \\
				2 & & 4 & & 6 \\
		}}$$
		and 
		$$\operatorname{low}(M):\vcenter{  \xymatrixrowsep{0.1cm} \xymatrixcolsep{0.5cm} \xymatrix{
				& & 4 \ar[rd] \ar[dl] & & 6 \ar[dl] \\
				& 3 & & 5
		}},$$
		respectively.
	\end{example}
	\section{Self-orthogonal indecomposable modules}
	Having described the indecomposable $\Lambda_n$-modules, the next step towards describing the generalized tilting $\Lambda_n$-modules is determining which indecomposable $\Lambda_n$-modules are self-orthogonal, as these will be candidates for inclusion in generalized tilting modules. For details on generalized tilting modules, we refer to Subsection~\ref{subsection: generalized tilting modules}. Throughout the following sections we will make frequent use of various dimension shifting arguments. We record the most common one in the following lemma. 
	
	\begin{lemma}\label{lemma:dimension shift argument}
		Let $M$ and $N$ be finite-dimensional $\Lambda_n$-modules. 	Consider the following two short exact sequences, where $K$ is the kernel of the projective cover $P\tto M$, and $C$ is the cokernel of the injective envelope $N\hookrightarrow I$:
		$$K\hookrightarrow P \tto M, \quad N\hookrightarrow I \tto C.$$ 
		Then,
		\begin{enumerate}[(i)]
			\item
			$\dim\operatorname{Ext}_{\Lambda_n}^1(M,N)=\dim\operatorname{Hom}_{\Lambda_n}(M,N)-\dim\operatorname{Hom}_{\Lambda_n}(P,N)+\dim\operatorname{Hom}_{\Lambda_n}(K,N)$;
			\item
			$\operatorname{Ext}_{\Lambda_n}^k(M,N)\cong \operatorname{Ext}_{\Lambda_n}^{k-1}(K,N)\cong \operatorname{Ext}_{\Lambda_n}^{k-1}(M,C)$, for all $k\geq 2$;
			\item
			$\operatorname{Ext}_{\Lambda_n}^k(M,N)\cong \operatorname{Ext}_{\Lambda_n}^{k-2}(K,C)$, for all $k\geq 3$.
		\end{enumerate} 
	\end{lemma}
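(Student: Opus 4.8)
The plan is to derive all three statements from the two standard long exact sequences obtained by applying $\operatorname{Hom}_{\Lambda_n}(\blank,N)$ to the first short exact sequence and $\operatorname{Hom}_{\Lambda_n}(M,\blank)$ to the second, exploiting that $P$ is projective and $I$ is injective.

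First I would apply $\operatorname{Hom}_{\Lambda_n}(\blank,N)$ to $K\hookrightarrow P \tto M$. Since $P$ is projective, $\operatorname{Ext}_{\Lambda_n}^k(P,N)=0$ for all $k\geq 1$, so the resulting long exact sequence collapses. In low degrees it yields the exact sequence
$$0\to\operatorname{Hom}_{\Lambda_n}(M,N)\to\operatorname{Hom}_{\Lambda_n}(P,N)\to\operatorname{Hom}_{\Lambda_n}(K,N)\to\operatorname{Ext}_{\Lambda_n}^1(M,N)\to 0,$$
and taking the alternating sum of $\Bbbk$-dimensions along this exact sequence gives (i). In higher degrees the same long exact sequence provides connecting isomorphisms $\operatorname{Ext}_{\Lambda_n}^k(M,N)\cong\operatorname{Ext}_{\Lambda_n}^{k-1}(K,N)$ for every $k\geq 2$, which is the first isomorphism in (ii). Dually, applying $\operatorname{Hom}_{\Lambda_n}(M,\blank)$ to $N\hookrightarrow I \tto C$ and using that $\operatorname{Ext}_{\Lambda_n}^k(M,I)=0$ for $k\geq 1$ yields $\operatorname{Ext}_{\Lambda_n}^k(M,N)\cong\operatorname{Ext}_{\Lambda_n}^{k-1}(M,C)$ for $k\geq 2$, which is the second isomorphism in (ii).

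Finally, (iii) follows by iterating (ii): for $k\geq 3$ we have $\operatorname{Ext}_{\Lambda_n}^k(M,N)\cong\operatorname{Ext}_{\Lambda_n}^{k-1}(K,N)$ by the first part of (ii), and since $k-1\geq 2$ and $C$ is still the cokernel of the injective envelope of $N$, a second application of (ii), now with $K$ in the role of $M$, gives $\operatorname{Ext}_{\Lambda_n}^{k-1}(K,N)\cong\operatorname{Ext}_{\Lambda_n}^{k-2}(K,C)$; composing the two isomorphisms proves (iii). There is no genuine obstacle here, as this is routine homological algebra; the only points requiring a modicum of care are bookkeeping the degree ranges so that the relevant $\operatorname{Ext}$-groups of $P$ and $I$ really do vanish and the connecting maps are isomorphisms rather than merely surjections or injections, and noting for (iii) that the sequence $N\hookrightarrow I \tto C$ used to shift $N$ is unchanged when $M$ is replaced by $K$, so part (ii) may legitimately be reused.
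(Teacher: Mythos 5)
Your proof is correct and follows essentially the same route as the paper: apply $\operatorname{Hom}_{\Lambda_n}(\blank,N)$ to the projective cover sequence and $\operatorname{Hom}_{\Lambda_n}(M,\blank)$ to the injective envelope sequence, use vanishing of higher $\operatorname{Ext}$ against $P$ and $I$ to collapse the long exact sequences, and compose the two shifts for (iii). Your explicit remark that the second isomorphism in (ii) applies with $K$ in place of $M$ is exactly the "combining" step the paper performs implicitly.
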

	\begin{proof}
		By applying $\operatorname{Hom}_{\Lambda_n}(\blank,N)$ to the first sequence, we get the long exact sequence 
		\begin{align*}
			0&\rightarrow \operatorname{Hom}_{\Lambda_n}(M,N)\rightarrow \operatorname{Hom}_{\Lambda_n}(P,N)\rightarrow \operatorname{Hom}_{\Lambda_n}(K,N)  \\
			&\rightarrow \operatorname{Ext}_{\Lambda_n}^1(M,N) \rightarrow \operatorname{Ext}_{\Lambda_n}^1(P,N) \rightarrow \operatorname{Ext}_{\Lambda_n}^1(K,N) \\
			&\rightarrow \operatorname{Ext}_{\Lambda_n}^2(M,N) \rightarrow \operatorname{Ext}_{\Lambda_n}^2(P,N) \rightarrow \operatorname{Ext}_{\Lambda_n}^2(K,N) \rightarrow \cdots 
		\end{align*}
		and, since $\operatorname{Ext}_{\Lambda_n}^k(P,N)=0$, for all $k\geq 1$, we have $$\dim\operatorname{Ext}_{\Lambda_n}^1(M,N)=\dim\operatorname{Hom}_{\Lambda_n}(M,N)-\dim\operatorname{Hom}_{\Lambda_n}(P,N)+\dim\operatorname{Hom}_{\Lambda_n}(K,N)$$
		and $$\operatorname{Ext}_{\Lambda_n}^k(M,N)\cong \operatorname{Ext}_{\Lambda_n}^{k-1}(K,N),$$ for all $k\geq 2$. 
		If we instead apply $\operatorname{Hom}_{\Lambda_n}(M,\blank)$ to the second sequence, a similar argument implies that
		$$\operatorname{Ext}_{\Lambda_n}^k(M,N)\cong \operatorname{Ext}_{\Lambda_n}^{k-1}(M,C),$$ for all $k\geq 2$. By combining these results, we obtain $$\operatorname{Ext}_{\Lambda_n}^k(M,N)\cong\operatorname{Ext}_{\Lambda_n}^{k-1}(K,N)\cong \operatorname{Ext}_{\Lambda_n}^{k-2}(K,C),$$ for all $k\geq 3$.
	\end{proof}
	
	\begin{lemma}\label{lemma:which Omega(i,j,k) are in F(Delta)}
		The module $\Omega(i,j,k)$ is contained in $\mathcal{F}(\Delta)$ if the following conditions are met.
		\begin{enumerate}[(i)] 
			\item If $i\neq 1$ and $j\neq 1$, then $\Omega(i,j,k)\in \mathcal{F}(\Delta)$ if and only if
			$i\equiv k \mod 2 \quad \text{and}\quad j\not\equiv k \mod 2$.
			\item If $i=1$ and $j\neq 1$, then $\Omega(i,j,k)\in \mathcal{F}(\Delta)$ if and only if $j\not\equiv k \mod 2$.
			\item If $i\neq 1$ and $j=1$, then $\Omega(i,j,k)\in \mathcal{F}(\Delta)$ if and only if $i \equiv k\mod 2$.
			\item If $i=j=1$, then $\Omega(i,j,k)\in\mathcal{F}(\Delta)$, for all $1\leq k\leq n$.
		\end{enumerate}
	\end{lemma}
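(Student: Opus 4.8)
The plan is to prove the lemma directly from the explicit Loewy diagrams of the modules $\Omega(i,j,k)$, the standard modules $\Delta(i)$ (which have Loewy length two with top $i$ and socle $i-1$, except $\Delta(1)=L(1)$), and the characterization that a module lies in $\mathcal{F}(\Delta)$ if and only if it admits a filtration whose subquotients are among the $\Delta(\ell)$. First I would read off from the definition of $\Omega(i,j,k)$ that its "upper arm" $\operatorname{upp}(M)\cong\Omega(i,k,k)$ and "lower arm" $\operatorname{low}(M)\cong\Omega(k,j,k)$ fit into a short exact sequence $\operatorname{low}(M)\hookrightarrow M\tto\operatorname{upp}(M)$, so that $M\in\mathcal F(\Delta)$ as soon as both arms lie in $\mathcal F(\Delta)$; conversely, I would argue that any $\Delta$-filtration of $M$ must be compatible with this decomposition, reducing the four-case analysis to understanding when a single "arm" is $\Delta$-filtered.

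Next I would analyze the arms. The upper arm $\Omega(i,k,k)$ is the string module whose diagram is the "zig-zag" starting at $i$ and ending (at the bottom) at $k$; reading it from the bottom, it decomposes as a sequence of $\Delta$'s exactly when consecutive bottom vertices of the zig-zag differ by the pattern realizing $\Delta(\ell):\ \ell\to\ell-1$. A direct inspection of diagrams (a)–(d) in the definition of $\Omega$ shows that the top row of $\operatorname{upp}(M)$ consists of the vertices $[i,k)$ or $(i,k]$ depending on the parity of $k-i$, each sitting atop the appropriate socle vertex, and this is precisely a $\Delta$-filtration with subquotients $\Delta(i+1),\Delta(i+3),\dots$ exactly when $i\equiv k\pmod 2$ (so that the arm "closes up" at $k$ with a $\Delta(k)$ or ends cleanly), while if $i\not\equiv k\pmod 2$ the leftmost vertex $i$ is a source with no partner below it, so it contributes a simple composition factor $L(i)$ that is not of the form $\Delta(\ell)$ unless $i=1$. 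This yields: $\operatorname{upp}(M)\in\mathcal F(\Delta)$ iff $i\equiv k\pmod 2$ or $i=1$. Dually — but now using that the lower arm $\Omega(k,j,k)$ is a \emph{sub}module and that $\Delta$-filtrations can be tested on submodules with $\Delta$-filtered quotients — the lower arm is in $\mathcal F(\Delta)$ iff $j\not\equiv k\pmod 2$ or $j=1$; the switch from "$\equiv$" to "$\not\equiv$" reflects that the lower arm's role is flipped (it hangs below, with its distinguished vertex $j$ at the bottom). Combining the two conditions gives exactly the four cases (i)–(iv) in the statement.

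To turn "both arms in $\mathcal F(\Delta)$" into an iff rather than just a sufficient condition, I would invoke the standard fact (valid for any quasi-hereditary algebra) that $\mathcal F(\Delta)$ is closed under direct summands, kernels of epimorphisms between its objects, and — crucially here — that if $0\to X\to Y\to Z\to 0$ is exact with $Y\in\mathcal F(\Delta)$ and $Z\in\mathcal F(\Delta)$ then $X\in\mathcal F(\Delta)$; applied to $\operatorname{low}(M)\hookrightarrow M\tto\operatorname{upp}(M)$ together with the observation that any $\Delta$-submodule of $M$ must land in $\operatorname{low}(M)$ (the top of $M$ being semisimple with composition factors matching the tops of the $\Delta$'s in $\operatorname{upp}(M)$), I can force $M\in\mathcal F(\Delta)\Rightarrow\operatorname{upp}(M),\operatorname{low}(M)\in\mathcal F(\Delta)$. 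The main obstacle I anticipate is precisely this last point: making rigorous the claim that a hypothetical $\Delta$-filtration of $M$ respects the arm decomposition. I would handle it by comparing composition-factor multiplicities — $[M:\ L(\ell)]$ against $\sum(\text{filtration multiplicities})\cdot[\Delta(m):L(\ell)]$ — and using that the only standard modules with $L(\ell)$ in the top are $\Delta(\ell)$ itself, which pins down the filtration multiplicities uniquely and shows they are nonnegative integers iff the numerical parity conditions hold; this converts the structural question into the bookkeeping already implicit in the diagrams.
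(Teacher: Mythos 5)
There is a genuine gap, and it sits at the heart of your reduction. The sequence $\operatorname{low}(M)\hookrightarrow M\tto\operatorname{upp}(M)$ is not exact: $\operatorname{upp}(M)\cong\Omega(i,k,k)$ and $\operatorname{low}(M)=\Omega(k,j,k)$ both contain the peak composition factor $L(k)$, so their lengths add up to $\ell(M)+1$. More importantly, no repair of this sequence makes ``$M\in\mathcal{F}(\Delta)$ if and only if both arms lie in $\mathcal{F}(\Delta)$'' true, because the subquotient $\Delta(k)$ of the filtration straddles the two arms: its top is the peak $L(k)$ and its socle is the copy of $L(k-1)$ lying in the lower arm. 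Concretely, $\Omega(2,3,6)\in\mathcal{F}(\Delta)$ with subquotients $\Delta(6),\Delta(5),\Delta(4),\Delta(3)$, while $\operatorname{upp}(\Omega(2,3,6))=\Omega(2,6,6)$ has $L(6)$ in its socle underneath $L(5)$ and admits no standard submodule at all, hence is not in $\mathcal{F}(\Delta)$. Indeed $\Omega(i,k,k)$ is never in $\mathcal{F}(\Delta)$ for $k>1$, so your criterion ``$\operatorname{upp}(M)\in\mathcal{F}(\Delta)$ iff $i\equiv k\pmod 2$ or $i=1$'' is false; that your two arm conditions combine to the correct final answer is an accident of attaching the right parities to the wrong objects.

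The fallback you propose for the converse direction, comparing composition-factor multiplicities, is also insufficient. For $M=\Omega(3,2,6)$ (which fails condition (i) since $3\not\equiv 6$) the multiplicities $[M:L(x)]$ are $1,2,2,2,1$ for $x=2,\dots,6$, and the system $[M:L(x)]=\sum_m c_m\,[\Delta(m):L(x)]$ has the nonnegative integral solution $c_3=c_4=c_5=c_6=1$; the module nevertheless has no $\Delta$-filtration, because the composition factor $L(3)$ at the left end of the upper zig-zag sits \emph{above} $L(4)$ and has no $L(2)$ beneath it, so it can be neither the top of a $\Delta(3)$ nor the socle of a $\Delta(4)$. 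This orientation of the ends of the two zig-zags, governed by the parities of $k-i$ and $k-j$, is the actual content of the lemma and is what the paper's (admittedly terse) proof isolates: when the parities are right one writes down the filtration with subquotients $\Delta(j+1),\Delta(j+3),\dots,\Delta(k)$ (lower zig-zag together with the peak) and $\Delta(i+1),\Delta(i+3),\dots,\Delta(k-1)$ (upper zig-zag without the peak), and when a parity fails one exhibits a stranded composition factor as above. Your proposal would need to be rebuilt along these lines rather than via the arm decomposition.
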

	\begin{proof}
		We draw the module $\Omega(2,3,6)$:
		
		$$ \xymatrixrowsep{0.1cm} \xymatrixcolsep{0.5cm} \xymatrix{
			& 3 \ar[dl] \ar[dr]& & 5 \ar[dl] \ar[ddr] \\
			2 & & 4 & & \\
			& & & & 6 \ar[ddl]\\
			& & 4 \ar[rd] \ar[dl] \\
			& 3 & & 5
		}$$
		Here it is easy to see the standard filtration, because the standard modules, pictorially, look like
		$$\Delta(k):\vcenter{\xymatrixrowsep{0.5cm}\xymatrixcolsep{0.1cm}\xymatrix{
				&	k \ar[ld]\\ k-1	
		}}$$
		In this case, the subquotients of the standard filtration would be $\Delta(6), \Delta(5), \Delta(4),\Delta(3)$. In general, for a module $\Omega(i,j,k)$ with $i\neq 1$, $j\neq 1$, $i\equiv k\mod 2$ and $j\not\equiv k\mod 2$, the subquotients would be
		$$\Delta(i+1),\Delta(i+3),\dots, \Delta(k-1), \Delta(j+1),\Delta(j+3),\dots, \Delta(k).$$
		
		Let us instead draw the module $\Omega(2,4,6)$:
		$$ \xymatrixrowsep{0.1cm} \xymatrixcolsep{0.5cm} \xymatrix{
			& 3 \ar[dl] \ar[dr]& & 5 \ar[dl] \ar[ddr] \\
			2 & & 4 & & \\
			& & & & 6 \ar[ddl]\\
			& & 4 \ar[dr] \\
			& & &  5
		}$$
		Here, we see that there is no way to remedy the composition factor $L(4)$ contained in the top of $\Omega(2,4,6)$, preventing a standard filtration. The rest is similar.
	\end{proof}

	\begin{lemma}\label{lemma:which Omega(i,j,k) are in F(Nabla)}
		The module $\Omega(i,j,k)$ is contained in $\mathcal{F}(\nabla)$ if the following conditions are met.
		\begin{enumerate}[(i)]
			\item If $i\neq 1$ and $j\neq 1$, then $\Omega(i,j,k)\in \mathcal{F}(\nabla)$ if and only if
			$i\not\equiv k \mod 2 \quad \text{and}\quad j\equiv k \mod 2$.
			\item If $i=1$ and $j\neq 1$, then $\Omega(i,j,k)\in \mathcal{F}(\nabla)$ if and only if $j\equiv k \mod 2$.
			\item If $i\neq 1$ and $j=1$, then $\Omega(i,j,k)\in \mathcal{F}(\nabla)$ if and only if $i \not\equiv k\mod 2$.
			\item If $i=j=1$, then $\Omega(i,j,k)\in\mathcal{F}(\nabla)$, for all $1\leq k\leq n$.
		\end{enumerate}
	\end{lemma}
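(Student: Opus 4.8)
The plan is to deduce this lemma from Lemma~\ref{lemma:which Omega(i,j,k) are in F(Delta)} by transporting everything across the simple-preserving duality $\star$ on $\Lambda_n\operatorname{-mod}$. The first step is to record that $\star$ interchanges the subcategories $\mathcal{F}(\Delta)$ and $\mathcal{F}(\nabla)$. Since $\star$ fixes each simple module $L(i)$ and exchanges projective covers with injective envelopes, it carries the defining surjections (QH1)--(QH2) of the standard modules to the defining injections (QH1)$'$--(QH2)$'$ of the costandard modules, so $\star\Delta(i)\cong\nabla(i)$ for every $i$. Consequently, applying $\star$ to a standard filtration of a module $M$ produces a costandard filtration of $\star M$, and conversely; hence $M\in\mathcal{F}(\Delta)$ if and only if $\star M\in\mathcal{F}(\nabla)$.

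Next I would combine this with the identity $\star\,\Omega(i,j,k)\cong\Omega(j,i,k)$, already recorded in the definition of the modules $\Omega(i,j,k)$. Together these yield
\[
\Omega(i,j,k)\in\mathcal{F}(\nabla)\iff \Omega(j,i,k)\in\mathcal{F}(\Delta),
\]
which reduces every case of the present lemma to the corresponding case of Lemma~\ref{lemma:which Omega(i,j,k) are in F(Delta)} with the roles of $i$ and $j$ interchanged.

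Finally I would run the four-way case check. If $i\neq 1$ and $j\neq 1$, then Lemma~\ref{lemma:which Omega(i,j,k) are in F(Delta)}(i) applied to $\Omega(j,i,k)$ gives $\Omega(i,j,k)\in\mathcal{F}(\nabla)$ iff $j\equiv k\pmod 2$ and $i\not\equiv k\pmod 2$, which is part~(i). If $i=1$ and $j\neq 1$, then $\Omega(j,i,k)=\Omega(j,1,k)$ falls under Lemma~\ref{lemma:which Omega(i,j,k) are in F(Delta)}(iii), giving exactly the condition $j\equiv k\pmod 2$ of part~(ii); the symmetric case $i\neq 1$, $j=1$ uses part~(ii) of that lemma and yields part~(iii). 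The case $i=j=1$ is self-dual under the swap and follows from part~(iv). The only point requiring care is matching the hypothesis ``$i=1$'' to the branch governed by ``$j=1$'' after the exchange, and conversely; beyond this bookkeeping there is no real obstacle, since all the substantive work is already contained in Lemma~\ref{lemma:which Omega(i,j,k) are in F(Delta)}.
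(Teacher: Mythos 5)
Your proposal is correct and follows exactly the paper's approach: the paper's proof is the one-line observation that the statement follows from Lemma~\ref{lemma:which Omega(i,j,k) are in F(Delta)} via the simple-preserving duality, and you have simply spelled out the details (that $\star$ swaps $\mathcal{F}(\Delta)$ with $\mathcal{F}(\nabla)$, that $\star\,\Omega(i,j,k)\cong\Omega(j,i,k)$, and the resulting case-by-case bookkeeping). The case matching is carried out correctly.
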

	\begin{proof}
		This follows from Lemma \ref{lemma:which Omega(i,j,k) are in F(Delta)} by using the simple-preserving duality.
	\end{proof}

	\begin{corollary}\label{corollary:characteristic tilting module}
		For all $1\leq k\leq n$, we have $T(k)=\Omega(1,1,k)$, where $T(k)$ denotes the $k$th indecomposable summand of the characteristic tilting module.
	\end{corollary}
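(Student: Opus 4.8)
The plan is to deduce the statement from Ringel's description of the characteristic tilting module recalled at the end of Section~2, namely that $\operatorname{add}T=\mathcal{F}(\Delta)\cap\mathcal{F}(\nabla)$ and that $T(k)$ is the unique indecomposable object of $\mathcal{F}(\Delta)\cap\mathcal{F}(\nabla)$ admitting a monomorphism from $\Delta(k)$ whose cokernel is filtered by standard modules $\Delta(j)$ with $j<k$.

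First I would observe that $\Omega(1,1,k)\in\mathcal{F}(\Delta)\cap\mathcal{F}(\nabla)$ for every $1\leq k\leq n$: this is exactly case~(iv) of Lemma~\ref{lemma:which Omega(i,j,k) are in F(Delta)} together with case~(iv) of Lemma~\ref{lemma:which Omega(i,j,k) are in F(Nabla)}. Since $\Omega(1,1,k)$ is indecomposable, the equality $\operatorname{add}T=\mathcal{F}(\Delta)\cap\mathcal{F}(\nabla)$ and the Krull--Schmidt theorem give $\Omega(1,1,k)\cong T(\pi(k))$ for a well-defined $\pi(k)\in\{1,\dots,n\}$. As the modules $\Omega(1,1,1),\dots,\Omega(1,1,n)$ are pairwise non-isomorphic by the irredundancy of the classification, and as $T(1),\dots,T(n)$ are pairwise non-isomorphic, the map $\pi$ is a bijection of $\{1,\dots,n\}$, and it only remains to prove $\pi=\operatorname{id}$.

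To identify $\pi$, I would compare the largest index of a simple composition factor on the two sides of $\Omega(1,1,k)\cong T(\pi(k))$. On one hand, reading off the Loewy diagram of $\Omega(1,1,k)$, every composition factor is of the form $L(\ell)$ with $\ell\leq k$, and $L(k)$ occurs (exactly once), so this largest index equals $k$. On the other hand, $T(m)$ contains $\Delta(m)$ with quotient filtered by $\Delta(j)$, $j<m$, and since the composition factors of $\Delta(j)$ lie among $L(j)$ and $L(j-1)$, the largest index occurring in $T(m)$ is $m$. Comparing, we get $k=\pi(k)$ for all $k$, which proves the corollary. Alternatively, and perhaps more in keeping with the pictorial arguments used so far, one can bypass the counting by verifying the defining property of $T(k)$ directly: in the two cases $k$ even and $k$ odd one checks from the Loewy diagram that the submodule of $\Omega(1,1,k)$ generated by the unique copy of $L(k)$ is isomorphic to $\Delta(k)$, and that the resulting quotient is filtered by $\Delta(k-1),\Delta(k-2),\dots,\Delta(1)$, all of index $<k$.

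I expect the only genuinely delicate point to be the precise shape of the defining short exact sequence for $T(m)$ — that the cokernel of $\Delta(m)\hookrightarrow T(m)$ is filtered by $\Delta(j)$ with $j<m$, rather than by arbitrary standard modules. This is part of Ringel's construction in \cite{Ri91}; everything else is bookkeeping with the classification of indecomposables and the two membership lemmas for $\mathcal{F}(\Delta)$ and $\mathcal{F}(\nabla)$.
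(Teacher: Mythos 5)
Your proposal is correct and follows essentially the same route as the paper: both first invoke the two membership lemmas to place $\Omega(1,1,k)$ in $\mathcal{F}(\Delta)\cap\mathcal{F}(\nabla)$, and then verify the defining property of $T(k)$ by exhibiting $\Delta(k)$ as a submodule whose cokernel (computed in the paper as $\Omega(k-1,1,k-1)\oplus\Omega(k-2,1,k-2)$) is standard-filtered with subquotients of index $<k$. Your alternative counting argument via the largest composition-factor index is a harmless variant of the same idea.
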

	\begin{proof}
		It follows from Lemma \ref{lemma:which Omega(i,j,k) are in F(Delta)} and Lemma \ref{lemma:which Omega(i,j,k) are in F(Nabla)} that $\Omega(1,1,k)\in \mathcal{F}(\Delta)\cap \mathcal{F}(\nabla)$ for every $k$. The cokernel of the inclusion $\Delta(k)\hookrightarrow \Omega(1,1,k)$ is equal to $\Omega(k-1,1,k-1)\oplus \Omega(k-2,1,k-2)$. Since both $\Omega(k-1,1,k-1)$ and $\Omega(k-2,1,k-2)$ have a standard filtration by Lemma \ref{lemma:which Omega(i,j,k) are in F(Delta)}, so does their direct sum. It is clear that this standard filtration only has subquotients $\Delta(j)$ with $j<k$. This implies that $\Omega(1,1,k)=T(k)$.
	\end{proof}
	
	\begin{lemma}\label{lemma:projective covers and kernels}
		Let $M=\Omega(i,j,k)$ and let $K$ be the kernel of the projective cover $P\tto M$.
		\begin{enumerate}[(i)]
			\item If $k=i$ then $P\cong P_{(j, k]}$ and if $k>i$ then $P\cong P_{(i, k-1]} \oplus P_{(j, k-2]}$.
			\item The form of the module $K$ is given by the following table.
			
			\begin{center}
			\renewcommand{\arraystretch}{1.2}
				\begin{tabular}{>{\centering\arraybackslash}m{2.1cm}|>{\centering\arraybackslash}m{2.1cm}|>{\centering\arraybackslash}m{2.1cm}|p{7cm}}
					$i$ & $j$ & $k$ & $K$  \\ \hline  
					$1$ & $1$ & $1$ & $\Delta(2)$ \\ \hline 
					$1$ & $1$ & $k>1$ & $\Delta_{(2, k-2]}\oplus \Delta_{(2, k-1]}$ \\ 	\hline 
					$1$ & $k$ & $k>1$ & $\Delta_{(2, k]}\oplus L(k-1)$ \\
					\hline 
					$1$ & $k>j> 1$, $j\not\equiv k \mod 2$ & $k>j$ & $\Delta_{(2, k-2]}\oplus \Delta_{[j+2, k-1]}$ \\
					\hline 
					$1$ & $k>j> 1$, $j\equiv k \mod 2$ & $k>j$ & $\Delta_{(2, k-2]}\oplus \Delta_{[j+1, k-1]} \oplus L(j-1)$ \\
					\hline 
					$n$ & $1$ & $n$ & $\Delta_{(2, n-1]}$ \\
					\hline 
					$n$ & $n$ & $n$ & $L(n-1)$ \\ 
					\hline 
					$n$ & $n>j>1$, $j\not\equiv k \mod 2$ & $n$ & $\Delta_{[j+2, n-1]}$ \\
					\hline
					$n$ & $n>j>1$, $j\equiv k \mod 2$ & $n$ & $\Delta_{[j+1, n-1]}\oplus L(j-1)$ \\
					\hline
					$k$ & $1$ & $n>k>1$ & $\Delta_{(2, k+1]}$ \\
					\hline
					$k$ & $k$ & $n>k>1$ & $\Delta(k+1)\oplus L(k-1)$ \\
					\hline
					$k$ & $k>j>1$, $j\not\equiv k \mod 2$  & $n>k>j$ & $\Delta_{[j+2, k+1]}$ \\
					\hline
					$k$ & $k>j>1$, $j\equiv k \mod 2$ & $n>k>j$ & $\Delta_{[j+1, k+1]}\oplus L(j-1)$ \\
					\hline
					$k>i> 1$, $i\equiv k \mod 2$  & $1$ & $k>i$ & $\Delta_{[i+2, k-2]}\oplus \Delta_{(2, k-1]}$ \\
					\hline
					$k>i> 1$, $i\not\equiv k \mod 2$ & $1$ & $k>i$ & $\Delta_{[i+1, k-2]}\oplus \Delta_{(2, k-1]} \oplus L(i-1)$ \\
					\hline
					$k>i>1$, $i\equiv k \mod 2$ & $k$ & $k>i$ & $\Delta_{[i+2, k]} \oplus L(k-1)$ \\
					\hline
					$k>i>1$, $i\not\equiv k \mod 2$ & $k$ & $k>i$ & $\Delta_{[i+1, k]}\oplus L(i-1)\oplus L(k-1)$ \\
					\hline
					$k>i> 1$, $i\equiv k \mod 2$ & $k>j> 1$, $j\not\equiv k \mod 2$   & $k>i,j$ & $\Delta_{[i+2, k-2]}\oplus \Delta_{[j+1, k-1]}$ \\
					\hline
					$k>i> 1$, $i\not\equiv k \mod 2$ & $k>j> 1$, $j\not\equiv k \mod 2$  & $k>i,j$ & $\Delta_{[i+1, k-2]}\oplus \Delta_{[j+2, k-1]} \oplus L(i-1)$ \\
					 \hline
					$k>i> 1$, $i\equiv k \mod 2$ & $k>j> 1$,  $j\equiv k \mod 2$ & $k>i,j$ & $\Delta_{[i+2, k-2]}\oplus \Delta_{[j+1, k-1]} \oplus L(j-1)$ \\
					\hline
					$k>i> 1$, $i\not\equiv k \mod 2$ & $k>j> 1$,  $j\equiv k \mod 2$ & $k>i,j$ & $\Delta_{[i+1, k-2]}\oplus \Delta_{[j+1, k-1]} \oplus L(i-1)\oplus L(j-1)$
				\end{tabular}
			\end{center}
		\end{enumerate}
		
	\end{lemma}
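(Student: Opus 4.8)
The plan is to compute the projective cover and its kernel directly from the Loewy diagrams of the modules $\Omega(i,j,k)$ and the indecomposable projectives $P(i)$, handling the many cases in the table systematically rather than one at a time. First I would establish part (i). The top of $\Omega(i,j,k)$ can be read off from the four Loewy pictures in the definition: when $k>i$ the upper arm $\operatorname{upp}(\Omega(i,j,k))\cong\Omega(i,k,k)$ has top $\bigoplus_{c\in(i,k-1]}L(c)$ (the simple summands indexed by the ``peaks'' $i+1,i+3,\dots,k-1$), and the lower arm $\operatorname{low}(\Omega(i,j,k))=\Omega(k,j,k)$ contributes, to the top of the whole module, the peaks $\bigoplus_{c\in(j,k-2]}L(c)$ — note the lower arm's top peak $k-1$ is \emph{not} in the top of $\Omega(i,j,k)$ since it sits below $k$. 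Hence $P\cong P_{(i,k-1]}\oplus P_{(j,k-2]}$; when $k=i$ the upper arm degenerates to the simple $L(k)$ which lies in the radical, so only the lower-arm peaks survive and $P\cong P_{(j,k]}$. This also pins down the parity conventions in the interval notation $(a,b]$ used throughout.

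Next, for part (ii), I would use the short exact sequence $K\hookrightarrow P\tto M$ and compare composition factors, but more efficiently I would identify $K$ \emph{structurally}: each $P(c)$ with $2\le c\le n-1$ has radical $L(c-1)\oplus \Delta(c+1)$ (read off the Loewy diagram $P(c)$), while $P(n)=\Delta(n)$ and $P(1)=I(1)$ have radical $\Delta(2)$. When we map $P(c)\to M$ hitting a peak $L(c)$ of $M$, the part of $\operatorname{rad}P(c)$ that survives into $M$ is exactly the ``descending wedge'' $L(c-1)$ on the relevant arm, so the kernel contribution from that summand is a copy of $\Delta(c+1)$ — \emph{except} at the extreme peaks, where the behaviour changes: at the top peak $k-1$ of the upper arm the map goes to the full $P(k-1)$ and the kernel picks up $\Delta(k)$ together with an extra $L(k-2)$ that collides with the next wedge, and at a peak adjacent to $j$ on the lower arm (the case $j\equiv k\bmod 2$, so the lowest lower-arm peak is $j+1$) the summand $L(j)$ of $\operatorname{rad}P(j+1)$ does map into $M$ but its partner $L(j-1)$ does not, contributing the isolated $L(j-1)$ seen in the table. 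Collecting these contributions over all peaks, and treating separately the degenerate cases $i=1$ (upper arm absent or starting at $1$, so $P(1)$ appears and contributes $\Delta(2)$), $j=1$ (lower arm analog), $k=i$, $k=n$ (where $P(n)=\Delta(n)$ has no $\Delta(n+1)$ in its radical), yields precisely the twenty-odd rows of the table.

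Concretely I would organize the proof as: (1) prove (i) by the top computation above; (2) prove (ii) for the ``generic'' row $k>i,j>1$ with $i\equiv k$, $j\not\equiv k\pmod 2$ — i.e.\ $M\in\mathcal F(\Delta)$ by Lemma~\ref{lemma:which Omega(i,j,k) are in F(Delta)} — by writing $P\tto M$ as the sum of the wedge maps $P(c)\to M$ and observing that the kernel is $\Delta_{[i+2,k-2]}\oplus\Delta_{[j+1,k-1]}$; (3) derive all remaining rows by perturbing this computation according to whether $i$ (resp.\ $j$) equals $1$, equals $k$, or has the ``wrong'' parity, each such perturbation inserting one simple summand $L(i-1)$ or $L(k-1)$ or $L(j-1)$, or replacing a $\Delta$ by $L(k-1)$, as dictated by the boundary behaviour; (4) dispose of the tiny base cases $(1,1,1)$, $(n,n,n)$ directly from the pictures $P(1)=I(1)$ and $P(n)=\Delta(n)$. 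The main obstacle is purely bookkeeping: there is no conceptual difficulty, but one must be careful about the \emph{overlap} of adjacent wedges — since consecutive peaks $c$ and $c+2$ on an arm are distance two apart, the radicals of $P(c)$ and $P(c+2)$ share no composition factor with $M$ beyond what is accounted for, so the kernel genuinely splits as a direct sum of the indicated $\Delta$'s and $L$'s rather than forming a larger indecomposable; verifying this splitting (e.g.\ by a dimension count using $\dim\Omega(i,j,k)$ and $\dim P(c)$, or by exhibiting explicit sub-representations of $P$) is the step that requires the most care.
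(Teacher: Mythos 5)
Your overall strategy---read off $\operatorname{top}\Omega(i,j,k)$ from the Loewy diagram to obtain $P$, then determine $K$ by tracking which part of each $\operatorname{rad}P(c)\cong L(c-1)\oplus\Delta(c+1)$ survives in $M$ and how the images of adjacent summands overlap---is exactly the paper's approach (the paper works the single example $\Omega(2,4,6)$ and declares the remaining cases similar), and your derivation of part (i) is correct in substance. However, several concrete assertions in your boundary analysis for part (ii) are false, and following them literally would produce wrong kernels. First, $\operatorname{rad}P(n)=L(n-1)$, not $\Delta(2)$ (you handle $k=n$ correctly later, but the sentence as written is wrong). Second, when $i=k$ the factor $L(k)$ is a \emph{peak} of $M=\Omega(k,j,k)$, not in the radical; the formula $P\cong P_{(j,k]}$ is right precisely because $k\in(j,k]$. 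Third, the lower-arm peak excluded from $\operatorname{top}M$ when $k>i$ is $L(k)$ (the hinge, which receives an arrow from the upper arm's $L(k-1)$), not $L(k-1)$. Fourth, and most seriously, the claim that at the peak $k-1$ of the upper arm ``the kernel picks up $\Delta(k)$'' fails whenever $j<k$: there $\Delta(k)$ is a submodule of $M$, the map $P(k-1)\to M$ is injective, and no $\Delta(k)$ occurs in $K$ (consistent with the generic rows, whose upper-arm interval stops at $k-2$); the summand $\Delta(k-1)$ that does appear comes from the overlap of the images of $P(k-1)$ and $P(k-2)$ at the socle factor $L(k-1)$ of the lower arm. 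Similarly, the isolated $L(j-1)$ in the case $j\equiv k\bmod 2$ comes from $\operatorname{rad}P(j)$ (the lowest lower-arm peak is then $j$ itself), not from $\operatorname{rad}P(j+1)$.

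In short, the plan is viable and is the paper's own, but the index bookkeeping at the ends of the arms must be redone before one can claim it ``yields precisely the rows of the table.'' You are right that the decisive check is the dimension count $\dim K=\dim P-\dim M$ together with exhibiting the anti-diagonal copies of $\Delta(c+1)$ spanning consecutive cover summands; carrying that check out in the rows with $j=k$ and $k>i$ (for instance $\Omega(2,4,4)$, whose cover is $P(3)$ with kernel $L(3)$, or $\Omega(2,6,6)$, with kernel $\Delta(4)\oplus L(5)$) in fact reveals that the printed table also contains a spurious $\Delta(k)$ in those rows ($\Delta_{[i+2,k]}$ should read $\Delta_{[i+2,k-2]}$, and likewise in the neighbouring $j=k$ rows), coincidentally matching your erroneous heuristic. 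So the step you defer as ``pure bookkeeping'' is exactly where both your proposal and the stated table need repair.
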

	\begin{proof}
		We again consider the example $M=\Omega(2,4,6)$:
		
		$$ \xymatrixrowsep{0.1cm} \xymatrixcolsep{0.5cm} \xymatrix{
			& 3 \ar[dl] \ar[dr]& & 5 \ar[dl] \ar[ddr] \\
			2 & & 4 & & \\
			& & & & 6 \ar[ddl]\\
			& & 4 \ar[dr] \\
			& & &  5
		}$$
		We have $\operatorname{top} (M)=L(3)\oplus L(4)\oplus L(5)$ which gives a projective cover $P=P(3)\oplus P(4)\oplus P(5)$. Therefore, we have a surjection $P\tto M$, whose kernel is equal to $\Delta(4)\oplus \Delta(5)\oplus L(3)$. The remaining cases are easily ascertained by drawing the Loewy diagrams of the appropriate modules.
	\end{proof}
	\begin{lemma}\label{lemma:M not in F(delta) iff kernel has simple direct summand}
		The kernel of the projective cover of $M$ has a simple direct summand if and only if $M\notin \mathcal{F}(\Delta)$.
	\end{lemma}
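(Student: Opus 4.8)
The plan is to derive both conditions by inspecting the explicit descriptions in Lemma~\ref{lemma:projective covers and kernels} and Lemma~\ref{lemma:which Omega(i,j,k) are in F(Delta)}. The starting point is the elementary remark that the standard module $\Delta(c)$ is simple only for $c=1$, since $\Delta(c)$ is two-dimensional for $c\geq 2$ and $\Delta(1)=L(1)$. Reading the table in Lemma~\ref{lemma:projective covers and kernels}(ii), the kernel $K$ is in every row a direct sum of standard modules $\Delta(c)$ with $c\geq 2$, together with, in some rows, one or more simple modules $L(c)$. As the index $1$ never occurs among the $\Delta$-summands of $K$, it follows that $K$ has a simple direct summand if and only if an $L(c)$-summand actually appears in the corresponding entry of the table. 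The degenerate cases, in which a set such as $(2,k-2]$ or $[j+2,k-1]$ is empty or a singleton, affect only the standard part of $K$ and so do not change this conclusion.

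It then remains to compare, case by case according to whether $i$ or $j$ equals $1$, the rows carrying an $L(c)$-summand with the parity conditions of Lemma~\ref{lemma:which Omega(i,j,k) are in F(Delta)}. When $i\neq 1$ and $j\neq 1$, the table shows that an $L(c)$-summand is present exactly when $i\not\equiv k$ or $j\equiv k$ modulo $2$, which is precisely the negation of the condition ``$i\equiv k$ and $j\not\equiv k$'' characterizing membership in $\mathcal{F}(\Delta)$ in part (i) of that lemma. The remaining cases are checked the same way: for $i=1\neq j$ an $L(c)$-summand occurs iff $j\equiv k$, matching the failure of part (ii); for $j=1\neq i$ it occurs iff $i\not\equiv k$, matching the failure of part (iii); and for $i=j=1$ no $L(c)$-summand ever occurs, which is consistent with part (iv), where $\Omega(1,1,k)\in\mathcal{F}(\Delta)$ for all $k$. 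Combining these with the first paragraph gives the stated equivalence.

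Since the whole argument is a finite verification against the two tables already established, I do not expect a genuine obstacle; the only point that needs a little care is the bookkeeping in the first paragraph, namely confirming that the standard-module part of $K$ never secretly contributes a simple module (which could happen only via $\Delta(1)=L(1)$, and the index $1$ does not appear there). A purely conceptual proof of one implication is also available—if $M\in\mathcal{F}(\Delta)$ then $K\in\mathcal{F}(\Delta)$ because $\mathcal{F}(\Delta)$ is resolving and the projective cover $P$ lies in $\mathcal{F}(\Delta)$—but this does not by itself rule out a simple summand (as $L(1)\in\mathcal{F}(\Delta)$), so the table comparison is the clean way to finish both directions.
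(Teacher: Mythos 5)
Your proof is correct and takes essentially the same approach as the paper, which likewise just observes that the rows of the table in Lemma~\ref{lemma:projective covers and kernels} carrying an $L(c)$-summand are exactly the parameter combinations excluded by Lemma~\ref{lemma:which Omega(i,j,k) are in F(Delta)}. Your extra care in ruling out a hidden simple summand via $\Delta(1)=L(1)$ is a worthwhile refinement of the paper's one-line verification.
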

	\begin{proof}
		This is easily checked by observing that the combinations of $i$, $j$ and $k$ which yield a simple direct summand of $K$, according to Lemma \ref{lemma:projective covers and kernels}, are exactly those for which $M$ does not have a $\Delta$-filtration, according to Lemma \ref{lemma:which Omega(i,j,k) are in F(Delta)}.
	\end{proof}
	
	\begin{lemma}\label{lemma:M not in F(nabla) iff cokernel has simple direct summand}
		The cokernel of the injective envelope of $M$ has a simple direct summand if and only if $M\notin \mathcal{F}(\nabla)$.
	\end{lemma}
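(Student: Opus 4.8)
The plan is to deduce this statement from its counterpart, Lemma~\ref{lemma:M not in F(delta) iff kernel has simple direct summand}, by transporting the whole situation through the simple-preserving duality $\star$, rather than by redoing the module-theoretic bookkeeping of Lemma~\ref{lemma:projective covers and kernels} on the injective side.

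First I would record the formal properties of $\star$ that are needed. As an exact contravariant self-equivalence of $\Lambda_n\operatorname{-mod}$ fixing the simple modules, $\star$ carries projective modules to injective modules, carries a projective cover $P\tto N$ to an injective envelope $N^\star\hookrightarrow P^\star$ (a surjection onto $N$ with projective source is a projective cover precisely when its kernel is superfluous, and dualizing turns this minimality into the statement that $N^\star\hookrightarrow P^\star$ is an essential extension into an injective), and it converts the short exact sequence $K\hookrightarrow P\tto N$ expressing the kernel of a projective cover into the short exact sequence $N^\star\hookrightarrow P^\star\tto K^\star$ expressing the cokernel of an injective envelope. Moreover $\star$ respects direct-sum decompositions and sends simple modules to simple modules, and since $\Lambda_n$ is quasi-hereditary with a simple-preserving duality, $\star$ interchanges $\Delta(i)$ with $\nabla(i)$ and hence interchanges $\mathcal{F}(\Delta)$ with $\mathcal{F}(\nabla)$.

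With this in place, the argument is a short chain of equivalences. Let $M$ be an indecomposable $\Lambda_n$-module and let $M\hookrightarrow I\tto C$ be the short exact sequence given by its injective envelope. Applying $\star$ gives $C^\star\hookrightarrow I^\star\tto M^\star$ with $I^\star$ projective, and by the remarks above $I^\star\tto M^\star$ is a projective cover, so $C^\star$ is isomorphic to the kernel of the projective cover of $M^\star$. Now $C$ has a simple direct summand if and only if $C^\star$ does, because $\star$ is an equivalence preserving direct-sum decompositions and sending simples to simples; by Lemma~\ref{lemma:M not in F(delta) iff kernel has simple direct summand} applied to $M^\star$, this holds if and only if $M^\star\notin\mathcal{F}(\Delta)$; and since $\star$ interchanges $\mathcal{F}(\Delta)$ and $\mathcal{F}(\nabla)$, that is equivalent to $M\notin\mathcal{F}(\nabla)$. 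Concatenating these equivalences yields the lemma.

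I do not expect a genuine obstacle here, since the statement is literally the $\star$-dual of Lemma~\ref{lemma:M not in F(delta) iff kernel has simple direct summand}; the only point that warrants explicit verification is that $\star$ sends a projective cover to an injective envelope (not merely a projective module to an injective module), which is what guarantees that $C^\star$ is the kernel of the \emph{minimal} projective presentation rather than of some arbitrary projective surjection. Once that is noted, nothing further is required.
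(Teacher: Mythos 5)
Your argument is exactly the paper's: the lemma is deduced from Lemma~\ref{lemma:M not in F(delta) iff kernel has simple direct summand} via the simple-preserving duality $\star$, and your write-up merely makes explicit the standard facts (that $\star$ carries projective covers to injective envelopes, preserves simples, and swaps $\mathcal{F}(\Delta)$ with $\mathcal{F}(\nabla)$) that the paper leaves implicit. The proof is correct and matches the paper's approach.
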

	\begin{proof}
		This follows from Lemma \ref{lemma:M not in F(delta) iff kernel has simple direct summand} by using the simple-preserving duality.
	\end{proof}

	\begin{lemma}\label{lemma:projective resolution of simple modules}
		Let $Q_\bullet$ be the projective resolution of $L(i)$. The terms $Q_m$ of $Q_\bullet$ are given by
		$$Q_m=\begin{cases}
			P_{[i-m,i+m]}, &  \text{if } m<i \text{ and } m \leq n-i; \\
			P_{[i-m,n)}, & \text{if } n-i<m<i;\\
			P_{[m-i+2,i+m]}, & \text{if } i\leq m \leq n-i; \\
			P_{[m-i+2,n)}, & \text{if } i\leq m \text{ and } n-i < m.
		\end{cases}$$
	\end{lemma}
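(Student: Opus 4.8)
The plan is to compute the minimal projective resolution $Q_\bullet$ of $L(i)$ by iterating syzygies. For a module $M$, let $\Sigma M$ denote the kernel of a projective cover of $M$, so that $Q_m$ is a projective cover of $\Sigma^m L(i)$, with $\Sigma$ additive and $\Sigma^{m+1}L(i)\cong\Sigma\bigl(\Sigma^m L(i)\bigr)$; I use the conventions $L(0):=0$, $\Delta(n+1):=0$. The first step is to record the one-step syzygies of the simple and standard modules. Reading off the Loewy diagrams of $P(j)$ and $\Delta(j)$ from Section~2 (this is also a special case of Lemma~\ref{lemma:projective covers and kernels}) gives
\[
\Sigma L(j)\ \cong\ L(j-1)\oplus\Delta(j+1)\qquad\text{and}\qquad\Sigma\Delta(j)\ \cong\ \Delta(j+1)
\]
for all $j$; the only inputs here are that $\operatorname{rad}P(j)\cong L(j-1)\oplus\Delta(j+1)$ genuinely splits as a direct sum (the vertices $j-1$ and $j+1$ are joined by no arrow inside $\operatorname{rad}P(j)$) and that $\Delta(n)=P(n)$ is projective, so that $\Sigma\Delta(n)=0=\Delta(n+1)$.

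The core of the proof is the following claim, established by induction on $m$: if $m\ge i$ then $\Sigma^m L(i)\cong\bigoplus_{c\in D_m}\Delta(c)$, and if $m<i$ then $\Sigma^m L(i)\cong L(i-m)\oplus\bigoplus_{c\in D_m}\Delta(c)$, where
\[
D_m\ :=\ \{\,c\in\{1,\dots,n\}\ :\ |i-m|+2\le c\le i+m\ \text{ and }\ c\equiv i+m\mod 2\,\};
\]
moreover all displayed summands are pairwise non-isomorphic. The case $m=0$ is immediate. For the inductive step one applies $\Sigma$ summand-by-summand using the one-step formulas: a summand $L(i-m)$ (present precisely when $m<i$) produces $L(i-m-1)\oplus\Delta(i-m+1)$, each $\Delta(c)$ produces $\Delta(c+1)$, and every summand $\Delta(n)$ is annihilated. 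One then checks that the resulting collection of indices is exactly $D_{m+1}$ together with the index $i-(m+1)$ of the possible $L$-summand, and that no repetition occurs. The crucial observation is that the least element of $D_{m+1}$ equals $|i-(m+1)|+2$ in every case: it is contributed by the newly created $\Delta(i-m+1)$ when $m+1\le i$, and by the shift of the old least element of $D_m$ when $m+1>i$.

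Granting the claim, let $S_m$ be the index set of $\Sigma^m L(i)$, so $S_m=D_m$ when $m\ge i$ and $S_m=\{i-m\}\cup D_m$ when $m<i$. Since $P(j)$ is a projective cover of both $L(j)$ and $\Delta(j)$, we obtain $Q_m\cong P_{S_m}$, and it remains only to rewrite this in the notation of the statement. The set $S_m$ is an arithmetic progression with common difference $2$: its least element is $i-m$ if $m<i$ and $m-i+2$ if $m\ge i$, while its greatest element is $i+m$ if $i+m\le n$ and otherwise the largest integer $\le n$ of the parity of $i+m$ — which is exactly the truncation built into the bracket $[a,b)$ of the previous section. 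Splitting into the four cases $(m<i,\ m\le n-i)$, $(n-i<m<i)$, $(i\le m\le n-i)$ and $(i\le m,\ n-i<m)$ then yields $P_{[i-m,i+m]}$, $P_{[i-m,n)}$, $P_{[m-i+2,i+m]}$ and $P_{[m-i+2,n)}$ respectively, as claimed.

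I expect the inductive step to be the main obstacle, since it amounts to controlling both ends of $S_m$ simultaneously: the \emph{left end}, which ``reflects'' once the $L$-summand has descended to $L(1)$ and been replaced by $\Delta(2)$, and the \emph{right end}, which ``reflects'' each time a summand reaches $\Delta(n)=P(n)$ and drops out of the resolution. These two reflections take place around the indices $m=i$ and $m=n-i$, and it is their overlap that is responsible for the four-case structure — the middle range $n-i<m<i$ being exactly where both reflections are active at once.
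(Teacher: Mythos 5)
Your proof is correct and follows essentially the same route as the paper, whose entire argument is the remark that the lemma ``follows from repeated application of Lemma~\ref{lemma:projective covers and kernels}'': your syzygy formulas $\Sigma L(j)\cong L(j-1)\oplus\Delta(j+1)$ and $\Sigma\Delta(j)\cong\Delta(j+1)$ are exactly the relevant rows of that lemma's table (together with $\Delta(n)=P(n)$), and your induction on $m$ is the omitted bookkeeping. The only difference is that you make the two ``reflections'' at $m=i$ and $m=n-i$ explicit, which is a welcome elaboration rather than a different method.
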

	\begin{proof}
		This follows from repeated application of Lemma \ref{lemma:projective covers and kernels}.
	\end{proof}
	
	\begin{lemma}\label{lemma:Ext between simples is non-zero}
		Let $i$ and $j$ be such that not both are equal to 1. Then $\operatorname{Ext}_{\Lambda_n}^m(L(i),L(j))\neq 0$ for
		$$m=\begin{cases}
			|i-j|, & \text{if }i\neq j;\\
			2, & \text{if }i=j.
		\end{cases}$$
	\end{lemma}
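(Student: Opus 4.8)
The plan is to compute $\operatorname{Ext}^m_{\Lambda_n}(L(i),L(j))$ directly from the projective resolution $Q_\bullet$ of $L(i)=\Omega(i,i,i)$ recorded in Lemma~\ref{lemma:projective resolution of simple modules}. Since that resolution is obtained by iterating projective covers via Lemma~\ref{lemma:projective covers and kernels}, it is minimal: the image of each differential $Q_m\to Q_{m-1}$ is the kernel of a projective cover and hence lies in $\operatorname{rad}Q_{m-1}$. Therefore applying $\operatorname{Hom}_{\Lambda_n}(\blank,L(j))$ kills all differentials, and
\[
\operatorname{Ext}^m_{\Lambda_n}(L(i),L(j))\cong\operatorname{Hom}_{\Lambda_n}(Q_m,L(j)),
\]
whose dimension is precisely the multiplicity of $P(j)$ as an indecomposable summand of $Q_m$. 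Thus the statement reduces to checking that $P(j)$ occurs in $Q_m$ for the prescribed value of $m$. I would also note at the outset that, since $\star$ is a simple-preserving duality, $\operatorname{Ext}^m_{\Lambda_n}(L(i),L(j))\cong\operatorname{Ext}^m_{\Lambda_n}(L(j),L(i))$, so in the case $i\neq j$ it suffices to treat $j<i$.

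With these reductions in place, what remains is a direct inspection of the four regimes in the formula for $Q_m$ and of the index-set conventions $[a,b]$, $[a,b)$, $(a,b]$. In the case $i\neq j$ with $j<i$, take $m=i-j$; then $m<i$, so only the first two regimes can occur, giving $Q_m=P_{[i-m,\,i+m]}=P_{[j,\,2i-j]}$ or $Q_m=P_{[j,\,n)}$, and in each the index $j$ lies in the relevant set (using $j\le i\le n$ and that $2i-j\equiv j\bmod 2$), so $P(j)$ is a summand of $Q_m$. In the case $i=j$ we have $i\ge 2$ and take $m=2$; running through the four regimes one finds $Q_2$ equal to one of $P_{[i-2,\,i+2]}$, $P_{[i-2,\,n)}$, $P_{[2,4]}$ (this occurs only when $i=2$), or $P_{[2,\,n)}$ (only when $i=2$ and $n\le 3$), and in every case $i$ belongs to the corresponding index set, so $P(i)$ is a summand of $Q_2$. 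In all cases the required indecomposable projective occurs in $Q_m$, and hence the Ext group is nonzero.

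I expect the only genuinely delicate point to be the bookkeeping: checking uniformly across all four regimes of the resolution formula — and in particular in the boundary cases where the index set has the truncated form $[\,\cdot\,,n)$ and for small values of $n$ — that the relevant projective is not lost. The structural input (minimality of $Q_\bullet$, hence vanishing of the induced differentials) is immediate from the construction of the resolution, so once that is noted the remainder is a routine, if slightly tedious, case verification.
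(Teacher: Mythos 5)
Your proof is correct and follows essentially the same route as the paper: both arguments read off the occurrence of the relevant indecomposable projective in the minimal projective resolution of $L(i)$ from Lemma~\ref{lemma:projective resolution of simple modules} (the paper reduces to $i<j$ by duality where you reduce to $j<i$, which is immaterial). You merely make explicit the minimality argument that the paper leaves implicit, and your case bookkeeping across the four regimes checks out.
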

	\begin{proof}
		Assume that $i<j$.
		By Lemma \ref{lemma:projective resolution of simple modules}, the module $P(j)$ appears at position $j-i$ of the projective resolution of $L(i)$, yielding a non-zero extension.  The case $j<i$ follows by using the simple-preserving duality. For $i=j>1$, the same lemma tells us that $P(i)$ appears in the second position of the projective resolution of $L(i)$, which proves the claim.
	\end{proof}

	\begin{proposition}
		If an indecomposable $\Lambda_{n}$-module $M$ has neither a standard filtration nor a costandard filtration, then $M$ is not self-orthogonal. 
	\end{proposition}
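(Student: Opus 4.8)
The plan is to run through the indecomposables $M=\Omega(i,j,k)$ and show that whenever $M$ has neither a standard nor a costandard filtration, some $\operatorname{Ext}^m_{\Lambda_n}(M,M)$ with $m>0$ is non-zero. I would first record that such an $M$ satisfies $i\neq 1$ and $j\neq 1$: parts (ii)--(iv) of Lemmas~\ref{lemma:which Omega(i,j,k) are in F(Delta)} and~\ref{lemma:which Omega(i,j,k) are in F(Nabla)} show that as soon as $i=1$ or $j=1$ the module $M$ lands in $\mathcal{F}(\Delta)$ or in $\mathcal{F}(\nabla)$. In particular $L(1)$ is not a composition factor of $M$, so $\operatorname{Hom}_{\Lambda_n}(P(1),M)=0$; this will matter in the harder case.

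Let $K$ be the kernel of a projective cover $P\tto M$ and $C$ the cokernel of an injective envelope $M\hookrightarrow I$. By Lemma~\ref{lemma:M not in F(delta) iff kernel has simple direct summand}, $M\notin\mathcal{F}(\Delta)$ forces a simple summand $L(s)\mid K$; by Lemma~\ref{lemma:M not in F(nabla) iff cokernel has simple direct summand}, $M\notin\mathcal{F}(\nabla)$ forces a simple summand $L(t)\mid C$. If $(s,t)\neq(1,1)$, then Lemma~\ref{lemma:Ext between simples is non-zero} provides an $m\geq 1$ with $\operatorname{Ext}^m_{\Lambda_n}(L(s),L(t))\neq 0$ (take $m=|s-t|$ if $s\neq t$ and $m=2$ if $s=t$). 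Since $L(s)$ is a direct summand of $K$ and $L(t)$ of $C$, this space is a direct summand of $\operatorname{Ext}^m_{\Lambda_n}(K,C)$, hence that group is non-zero, and Lemma~\ref{lemma:dimension shift argument}(iii) (applicable since $m+2\geq 3$) gives $\operatorname{Ext}^{m+2}_{\Lambda_n}(M,M)\cong\operatorname{Ext}^m_{\Lambda_n}(K,C)\neq 0$, so $M$ is not self-orthogonal.

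It remains to treat $s=t=1$, where this strategy breaks down because $\operatorname{Ext}^m_{\Lambda_n}(L(1),L(1))=0$ for all $m\geq 1$. Using the simple-preserving duality one sees that $C$ is the $\star$-dual of the kernel of the projective cover of $M^{\star}=\Omega(j,i,k)$, so $L(1)$ is a direct summand of the kernel of the projective cover of \emph{both} $\Omega(i,j,k)$ and $\Omega(j,i,k)$; checking which entries of the table in Lemma~\ref{lemma:projective covers and kernels} produce $L(1)$ as a summand of $K$ shows this happens precisely when $i=j=2$, i.e.\ $M=\Omega(2,2,k)$ for some $2\leq k\leq n$. For these modules the Loewy diagram (in each of the four shapes in the definition of $\Omega$) has $L(2)$ in its socle, so the composite $\Delta(2)\tto L(2)\hookrightarrow M$ is a non-zero homomorphism; since $\operatorname{rad}P(1)\cong\Delta(2)$ and $\operatorname{Hom}_{\Lambda_n}(P(1),M)=0$, applying $\operatorname{Hom}_{\Lambda_n}(\blank,M)$ to $\operatorname{rad}P(1)\hookrightarrow P(1)\tto L(1)$ gives $\operatorname{Ext}^1_{\Lambda_n}(L(1),M)\cong\operatorname{Hom}_{\Lambda_n}(\Delta(2),M)\neq 0$. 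As $L(1)$ is a summand of $K$, Lemma~\ref{lemma:dimension shift argument}(ii) then gives $\operatorname{Ext}^2_{\Lambda_n}(M,M)\cong\operatorname{Ext}^1_{\Lambda_n}(K,M)$, which contains $\operatorname{Ext}^1_{\Lambda_n}(L(1),M)\neq 0$ as a summand, so again $M$ is not self-orthogonal.

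The main obstacle is this last case: one must both isolate the offending modules as the $\Omega(2,2,k)$ --- a somewhat patient reading of the kernel table --- and abandon the clean ``shift down to a pair of simples'' argument for a direct degree-two computation that uses the absence of $L(1)$ from $M$ together with the presence of $L(2)$ in its socle. I would take care to verify the socle claim against all four cases of the definition of $\Omega(i,j,k)$, or equivalently against the string-module picture, where the walk defining $\Omega(2,2,k)$ has both endpoints at vertex $2$ and at least one of them is a sink of the walk.
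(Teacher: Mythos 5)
Your proposal is correct and follows essentially the same route as the paper: reduce to the simple direct summands of the syzygy $K$ and cosyzygy $C$, apply the non-vanishing of extensions between simples via the double dimension shift, and treat $M=\Omega(2,2,k)$ separately by exhibiting $\operatorname{Ext}^1_{\Lambda_n}(L(1),M)\neq 0$ (your computation via $\Delta(2)\hookrightarrow P(1)\tto L(1)$ is the same as the paper's computation with the projective resolution of $L(1)$). The only cosmetic difference is that you absorb the simple modules $L(i)$, $i\geq 2$, into the general argument, whereas the paper disposes of them by citing Lemma~\ref{lemma:Ext between simples is non-zero} directly.
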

	\begin{proof}
		If $M=\Omega(i,j,k)$ is not simple (this case was covered in Lemma \ref{lemma:Ext between simples is non-zero}) and has neither a standard nor a costandard filtration, then $i,j> 1$ and either $i \equiv k \mod 2$ and $j\equiv k \mod 2$, or $i\not\equiv k \mod 2$ and $j\not\equiv k \mod 2$. This follows from Lemma \ref{lemma:which Omega(i,j,k) are in F(Delta)} and Lemma \ref{lemma:which Omega(i,j,k) are in F(Nabla)}.
		
		Let $K$ denote the kernel of the projective cover $P\tto M$ and $C$ the cokernel of the injective envelope $M\hookrightarrow I$. 
		By Lemma \ref{lemma:M not in F(delta) iff kernel has simple direct summand} and Lemma \ref{lemma:M not in F(nabla) iff cokernel has simple direct summand} both $K$ and $C$ have a simple direct summand. 
		\begin{enumerate}[$\bullet$]
			\item
			If $i \equiv k \mod 2$ and $j\equiv k \mod 2$, then $L(j-1)$ is a direct summand of $K$ and $L(i-1)$ is a direct summand of $C$.
			\item
			If $i\not\equiv k \mod 2$ and $j\not\equiv k \mod 2$, then $L(i-1)$ is a direct summand of $K$ and $L(j-1)$ is a direct summand of $C$. 
		\end{enumerate}
		Unless both $i$ and $j$ are equal to $2$, using Lemma \ref{lemma:dimension shift argument} together with Lemma \ref{lemma:Ext between simples is non-zero}, we get $$\operatorname{Ext}_{\Lambda_n}^m(M,M)\cong \operatorname{Ext}_{\Lambda_n}^{m-2}(K,C)\neq 0,$$ for some $m>2$. 
		
		If $i=j=2$, then $M=\Omega(2,2,k)$ and the kernel $K$ of the projective cover is equal to $\bigoplus_{x=3}^{k-1}\Delta(x)\oplus L(1)$. The beginning of the projective resolution of $L(1)$ looks as follows: 
		\[
		\xymatrix{
			\cdots \ar[r] & P(3) \ar[rr]^{d_2} \ar[dr] && P(2) \ar[rr]^{d_1} \ar[dr] && P(1) \ar[r] & L(1) \\
			&& \Delta(3) \ar[ur] && \Delta(2) \ar[ur]
		}\]
		Since $L(2)$ is a submodule of $M$ and there is no homomorphism from $P(1)$ to $M$, we have a non-zero extension of degree 1 from $L(1)$ to $M$. But this implies that there is a non-zero extension of degree two from $M$ to itself, as $L(1)$ is a direct summand of $K$ and, by Lemma \ref{lemma:dimension shift argument}, we have $\operatorname{Ext}_{\Lambda_n}^2(M,M)\cong \operatorname{Ext}_{\Lambda_n}^1(K,M)$. This proves the claim. 
	\end{proof}

	\begin{lemma} \label{lemma:Ext from simples to F(nabla) is zero}
		Let $M=\Omega(i,j,k)\in\mathcal{F}(\nabla)$. Then $\operatorname{Ext}_{\Lambda_n}^m(L(x),M)=0$, for all $1\leq x<\min(i,j)$ and $m\geq 0$.
	\end{lemma}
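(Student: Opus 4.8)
The plan is to reduce the statement, by an induction along a costandard filtration of $M$, to the vanishing of $\operatorname{Ext}_{\Lambda_n}^m(L(x),\nabla(\ell))$ for all $m\geq 0$ whenever $x<\ell$, and then to prove that reduced claim by a separate induction on $x$, built on the homological orthogonality of the pair $(\mathcal{F}(\Delta),\mathcal{F}(\nabla))$ from \cite{Ri91}.

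First I would note that the statement is vacuous unless $i,j\geq 2$: otherwise $\min(i,j)=1$ and there is no $x$ with $1\leq x<\min(i,j)$. So assume $i,j\geq 2$ and fix such an $x$. Next I would record that every composition factor $L(w)$ of $M=\Omega(i,j,k)$ satisfies $\min(i,j)\leq w\leq k$, which is immediate from the Loewy diagrams (equivalently, the upper arm $\Omega(i,k,k)$ has composition factors $L(i),\dots,L(k)$ and the lower arm $\Omega(k,j,k)$ has composition factors $L(j),\dots,L(k)$). Now fix a costandard filtration $0=M_0\subset M_1\subset\dots\subset M_t=M$ with $M_s/M_{s-1}\cong\nabla(\ell_s)$. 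Since the composition factors of $\nabla(\ell_s)$, namely $L(\ell_s)$ and $L(\ell_s-1)$ (the latter present as $\ell_s\geq\min(i,j)\geq 2$), must occur in $M$, the composition-factor bound forces $\ell_s-1\geq\min(i,j)$, hence $\ell_s>x$ for every $s$. (Alternatively, Lemma~\ref{lemma:which Omega(i,j,k) are in F(Delta)} together with the simple-preserving duality shows that the costandard subquotients of $M$ are exactly the $\nabla(\ell)$ with $\ell\geq\min(i,j)+1$.) Applying $\operatorname{Hom}_{\Lambda_n}(L(x),\blank)$ to the short exact sequences $0\to M_{s-1}\to M_s\to\nabla(\ell_s)\to 0$ and inducting on $s$, the desired vanishing $\operatorname{Ext}_{\Lambda_n}^m(L(x),M)=0$ for all $m\geq 0$ follows once we know $\operatorname{Ext}_{\Lambda_n}^m(L(x),\nabla(\ell))=0$ for all $m\geq 0$ and all $\ell>x$.

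To establish this reduced claim for $1\leq x<\ell$ and all $m\geq 0$, I would induct on $x$. For $x=1$ we have $L(1)=\Delta(1)$, so $\operatorname{Hom}_{\Lambda_n}(L(1),\nabla(\ell))=0$ because $\operatorname{soc}\nabla(\ell)=L(\ell)$ and $\ell\neq 1$, while $\operatorname{Ext}_{\Lambda_n}^m(L(1),\nabla(\ell))=\operatorname{Ext}_{\Lambda_n}^m(\Delta(1),\nabla(\ell))=0$ for $m\geq 1$ by the orthogonality of $(\mathcal{F}(\Delta),\mathcal{F}(\nabla))$. For $x\geq 2$, the Loewy diagram of $\Delta(x)$ provides the short exact sequence $0\to L(x-1)\to\Delta(x)\to L(x)\to 0$; applying $\operatorname{Hom}_{\Lambda_n}(\blank,\nabla(\ell))$ and using $\operatorname{Ext}_{\Lambda_n}^m(\Delta(x),\nabla(\ell))=0$ for $m\geq 1$ together with $\operatorname{Hom}_{\Lambda_n}(\Delta(x),\nabla(\ell))=0$ (as $x\neq\ell$), the long exact sequence yields $\operatorname{Hom}_{\Lambda_n}(L(x),\nabla(\ell))=0$ and $\operatorname{Ext}_{\Lambda_n}^m(L(x),\nabla(\ell))\cong\operatorname{Ext}_{\Lambda_n}^{m-1}(L(x-1),\nabla(\ell))$ for $m\geq 1$; since $1\leq x-1<\ell$, the inductive hypothesis makes the right-hand side vanish.

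I expect the only genuinely delicate point to be the bookkeeping in the second paragraph, namely confirming that no subquotient $\nabla(\ell)$ with $\ell\leq x$ can occur in a costandard filtration of $M$; this is handled cleanly by the composition-factor bound $\min(i,j)\leq w\leq k$. The remainder is a routine dimension-shift argument resting on Ringel's orthogonality, which is already in use elsewhere in the paper.
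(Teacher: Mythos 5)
Your proof is correct, but it takes a genuinely different route from the paper's. The paper works directly with the projective cover $P(x)\tto L(x)$, whose kernel is $L(x-1)\oplus\Delta(x+1)$: part (i) of Lemma~\ref{lemma:dimension shift argument} handles $\operatorname{Ext}^1$ by a dimension count (the delicate point there being that $\operatorname{Hom}_{\Lambda_n}(\Delta(x+1),M)=0$ even when $x+1=\min(i,j)$, because the relevant composition factor of $M$ sits in its top rather than its socle), and part (ii) gives the shift $\operatorname{Ext}_{\Lambda_n}^m(L(x),M)\cong\operatorname{Ext}_{\Lambda_n}^{m-1}(L(x-1),M)$, iterated down either to the degree-one case or to $L(1)$. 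You instead first filter $M$ by costandard modules, using the composition-factor bound $\min(i,j)\leq w\leq k$ to see that only subquotients $\nabla(\ell)$ with $\ell>x$ can occur, and then resolve $L(x)$ via the short exact sequence $L(x-1)\hookrightarrow\Delta(x)\tto L(x)$, inducting on $x$ with Ringel's orthogonality. Your version avoids both the $\operatorname{Ext}^1$ dimension count and the analysis of homomorphisms into the general module $M$ (you only need $\operatorname{Hom}_{\Lambda_n}(L(w),\nabla(\ell))=0$ for $w\neq\ell$, immediate from the simple socle), and it in fact proves the slightly stronger statement that $L(x)$ is left-orthogonal in all degrees to any module in $\mathcal{F}(\nabla)$ whose costandard subquotients all have index exceeding $x$. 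The paper's version has the compensating advantage of reusing the kernels already tabulated in Lemma~\ref{lemma:projective covers and kernels} and of not requiring one to identify the costandard subquotients of $\Omega(i,j,k)$. Both arguments ultimately rest on the same input, namely the vanishing of $\operatorname{Ext}^{>0}$ from $\mathcal{F}(\Delta)$ to $\mathcal{F}(\nabla)$.
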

	\begin{proof}
		It is clear that $\operatorname{Hom}_{\Lambda_n}(L(x),M)=0$, for all $1\leq x<\min(i,j)$, since $M$ does not have $L(x)$ as a composition factor for such $x$. It is also clear that $\operatorname{Ext}_{\Lambda_n}^m(L(1),M)=0$, for $m>0$, since $L(1)\in \mathcal{F}(\Delta)$ and $M\in\mathcal{F}(\nabla)$. This proves the claim for $x=1$. 
		
		Consider $x$ such that $2\leq x <\min(i,j)$.
		In this case, we have the short exact sequence $$L(x-1)\oplus \Delta(x+1)\hookrightarrow P(x)\tto L(x),$$ and by Lemma~\ref{lemma:dimension shift argument} we have the following equality:
		\begin{displaymath}
			\dim\operatorname{Ext}_{\Lambda_n}^1(L(x),M)=\dim\operatorname{Hom}_{\Lambda_n}(L(x),M)-\dim\operatorname{Hom}_{\Lambda_n}(P(x),M) 
			+\dim\operatorname{Hom}_{\Lambda_n}(L(x-1)\oplus \Delta(x+1),M).		    
		\end{displaymath}
		
		Since neither $L(x)$ nor $L(x-1)$ are composition factors of $M$ this equality reduces to
		$$\dim\operatorname{Ext}_{\Lambda_n}^1(L(x),M)=\dim\operatorname{Hom}_{\Lambda_n}(\Delta(x+1),M).$$
		If $2\leq x<\min(i,j)-1$, then clearly $$\operatorname{Hom}_{\Lambda_n}(\Delta(x+1),M)=0,$$ since $L(x+1)$ does not occur as composition factors in $M$. 
		When $x=\min(i,j)-1$ we also have $\operatorname{Hom}_{\Lambda_n}(\Delta(x+1),M)=0$. To see this, note that any non-zero homomorphism $f:\Delta(x+1)\to M$ annihilates $\operatorname{rad}(\Delta(x+1))=L(x)$ since $L(x)$ does not occur as a composition factor in $M$. This implies that the image of $f$ is isomorphic to $L(x+1)$. But $M$ has no such submodule since the (unique) composition factor $L(x+1)$ is contained in the top of $M$, a contradiction. It follows that $\operatorname{Ext}_{\Lambda_n}^1(L(x),M)=0$ for all $x<\min(i,j)$.
		
		It remains to show that $\operatorname{Ext}_{\Lambda_{n}}^m(L(x),M)=0$ for all $m>1$.
		By Lemma~\ref{lemma:dimension shift argument} and the short exact sequence above, together with the fact that $M\in \mathcal{F}(\nabla)$, it follows that 
		$$\operatorname{Ext}_{\Lambda_n}^m(L(x),M)\cong \operatorname{Ext}_{\Lambda_n}^{m-1}(L(x-1)\oplus \Delta(x+1),M)\cong \operatorname{Ext}_{\Lambda_n}^{m-1}(L(x-1),M).$$

		If $m< x$, then by repeated use of the isomorphism above we have $$\operatorname{Ext}_{\Lambda_n}^m(L(x),M)\cong \operatorname{Ext}_{\Lambda_n}^{1}(L(x-m+1),M)=0,$$ by the previous case. If $m\geq x$, then $$\operatorname{Ext}_{\Lambda_n}^m(L(x),M)\cong \operatorname{Ext}_{\Lambda_n}^{m-x+1}(L(1),M)=0,$$
		since $L(1)\in\mathcal{F}(\Delta)$ and $M\in\mathcal{F}(\nabla)$.
	\end{proof}
	
	\begin{proposition}\label{proposition:characterization of self-orthogonal modules}
		Let $\Omega(i,j,k)\in \mathcal{F}(\Delta)\cup \mathcal{F}(\nabla)$ be such that $\Omega(i,j,k)\notin \mathcal{F}(\Delta)\cap \mathcal{F}(\nabla)$. Then $\Omega(i,j,k)$ is self-orthogonal if and only if $|i-j|=1$.
	\end{proposition}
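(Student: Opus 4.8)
The plan is to reduce to the case $M=\Omega(i,j,k)\in\mathcal F(\Delta)$ by means of the simple-preserving duality $\star$, which sends $\Omega(i,j,k)$ to $\Omega(j,i,k)$, is exact, and leaves both the self-orthogonality of $M$ and the quantity $|i-j|$ unchanged. By Corollary~\ref{corollary:characteristic tilting module} the indecomposables in $\mathcal F(\Delta)\cap\mathcal F(\nabla)$ are exactly the $\Omega(1,1,k)$, so the hypothesis $M\notin\mathcal F(\Delta)\cap\mathcal F(\nabla)$ only excludes these; in particular $i\neq j$ throughout, since by Lemma~\ref{lemma:which Omega(i,j,k) are in F(Delta)} (and Lemma~\ref{lemma:which Omega(i,j,k) are in F(Nabla)}) no module $\Omega(i,i,k)$ with $i\neq 1$ lies in $\mathcal F(\Delta)\cup\mathcal F(\nabla)$. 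Since $M\notin\mathcal F(\nabla)$, Lemma~\ref{lemma:M not in F(nabla) iff cokernel has simple direct summand} says the cokernel $C$ of the injective envelope $M\hookrightarrow I$ has a simple direct summand, and by the dual of Lemma~\ref{lemma:projective covers and kernels} applied to $M^\star$ it decomposes as $C=C_\nabla\oplus C_L$ with $C_\nabla\in\mathcal F(\nabla)$ and $C_L$ a nonzero semisimple module whose socle labels can be read off the kernel table.

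The second ingredient is an explicit minimal projective resolution of $M$. As $M\in\mathcal F(\Delta)$, Lemma~\ref{lemma:projective covers and kernels} shows that the first syzygy $K$ of $M$ is a direct sum $\bigoplus_l\Delta(l)$ of standard modules; since $\Delta(l)$ has projective cover $P(l)$ and $\Omega\Delta(l)=\Delta(l+1)$ by Lemma~\ref{lemma: short exact sequences}, with $\Delta(n)=P(n)$ projective, the whole minimal projective resolution of $M$ is $P_0=P(M)$ and $P_p=\bigoplus_l P(l+p-1)$ for $p\geq1$, the sum over the labels $l$ with $\Delta(l)$ a summand of $K$. Hence $\operatorname{Ext}_{\Lambda_n}^p(M,L(x))\neq0$ for some $p\geq1$ if and only if $x$ is at least the smallest label occurring in $K$; equivalently, $\operatorname{Ext}_{\Lambda_n}^p(M,L(x))=0$ for all $p\geq1$ exactly when $x$ is strictly smaller than every label occurring in $K$.

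For the forward implication I would argue by contraposition, so assume $|i-j|\geq3$ and produce a non-zero self-extension. By Lemma~\ref{lemma:dimension shift argument} we have $\operatorname{Ext}_{\Lambda_n}^m(M,M)\cong\operatorname{Ext}_{\Lambda_n}^{m-1}(M,C)$ for $m\geq2$; the summand $C_\nabla$ contributes nothing by Ringel's orthogonality of $(\mathcal F(\Delta),\mathcal F(\nabla))$, so it suffices to find a simple summand $L(x)$ of $C_L$ with $x$ at least the smallest label in $K$, whence $\operatorname{Ext}_{\Lambda_n}^p(M,L(x))\neq0$ for a suitable $p\geq1$ and therefore $\operatorname{Ext}_{\Lambda_n}^{p+1}(M,M)\neq0$. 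Reading off the tables, when $i,j>1$ the smallest label in $K$ is about $\min(i,j)+2$ while $C_L$ contains $L(\max(i,j)-1)$, and $\max(i,j)-1\geq\min(i,j)+2$ precisely because $|i-j|\geq3$; the cases $i=1$ or $j=1$ go the same way (there $C_L$ contains $L(j-1)$, respectively $L(i-1)$, and the smallest label in $K$ is $2$ or $3$). Conversely, if $|i-j|=1$ the same comparison shows that \emph{every} simple summand $L(x)$ of $C_L$ satisfies $x<$ (smallest label in $K$), so $\operatorname{Ext}_{\Lambda_n}^{m-1}(M,C_L)=0$ for $m-1\geq1$, and hence $\operatorname{Ext}_{\Lambda_n}^m(M,M)=0$ for all $m\geq2$. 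The degree $\operatorname{Ext}_{\Lambda_n}^1(M,M)$ does not reduce to a homological shift and I would settle it by a direct dimension count via Lemma~\ref{lemma:dimension shift argument}(i), using the explicit forms of $M$, of its projective cover, and of $K=\bigoplus_l\Delta(l)$; note that for $|i-j|=1$ the module $M$ carries a standard filtration with consecutive subquotients $\Delta(a),\Delta(a+1),\dots,\Delta(k)$, which makes those three $\operatorname{Hom}$-spaces transparent.

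The main obstacle is bookkeeping: the table of Lemma~\ref{lemma:projective covers and kernels} has many rows, and everything rests on correctly identifying, in each configuration of $(i,j,k)$ with $i$ or $j$ possibly equal to $1$, both the socle labels of $C_L$ and the smallest label occurring in $K$, and then verifying the single numerical equivalence: $\max(\text{socle labels of }C_L)<\min(\text{labels of }K)$ holds if and only if $|i-j|=1$. A secondary subtlety worth stressing is that $C_L$ may contain $L(\min(i,j))$ even when $|i-j|=1$; this does \emph{not} yield a self-extension, precisely because the first syzygy of $M$ only involves standard modules with labels strictly larger than $\min(i,j)$, so $P(\min(i,j))$ never occurs in positive homological degree of the minimal resolution of $M$.
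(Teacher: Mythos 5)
Your argument is correct in outline but takes a genuinely different route from the paper's. The paper reduces to $M\in\mathcal{F}(\nabla)$ and, for $|i-j|=1$, kills the higher self-extensions by comparing $M$ with the characteristic tilting summand: it uses the short exact sequence $K\hookrightarrow T(k)\tto M$ to get $\operatorname{Ext}^m(M,M)\cong\operatorname{Ext}^{m-1}(K,M)$, computes the second syzygy of $K$, and invokes the vanishing of $\operatorname{Ext}^{\ast}(L(x),M)$ for $x<\min(i,j)$ (Lemma~\ref{lemma:Ext from simples to F(nabla) is zero}); it gets $\operatorname{Ext}^1(M,M)=0$ from $\operatorname{Hom}(K,M)=0$, and for $|i-j|>1$ it exhibits an explicit non-liftable chain map out of the projective resolution of $L(i-1)$. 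You instead work in $\mathcal{F}(\Delta)$ and exploit that the first syzygy of any $M\in\mathcal{F}(\Delta)$ is a sum of standard modules, so the whole minimal resolution is visible and $\operatorname{Ext}^p(M,L(x))$ is read off from which $P(x)$ occur; combined with $\operatorname{Ext}^m(M,M)\cong\operatorname{Ext}^{m-1}(M,C)$ and the splitting $C=C_\nabla\oplus C_L$, this turns the vanishing of all $\operatorname{Ext}^{\geq 2}(M,M)$ into the single uniform inequality $\max(\text{labels of }C_L)<\min(\text{labels of }K)$. That criterion is exact (both directions), handles the forward and backward implications simultaneously, and is in my view cleaner than the paper's case-by-case treatment; the price is that everything hinges on reading Lemma~\ref{lemma:projective covers and kernels} correctly in every parity configuration.

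Two soft spots to tighten. First, your numerical claim is stated loosely: for $i,j>1$ the smallest label in $K$ is $\min(i+2,j+1)$, not ``about $\min(i,j)+2$'', and the equivalence with $|i-j|=1$ only closes because membership in $\mathcal{F}(\Delta)\setminus\mathcal{F}(\nabla)$ forces $i\not\equiv j\pmod 2$ there, so $|i-j|\geq 2$ automatically means $|i-j|\geq 3$; when $i=1$ or $j=1$ the value $|i-j|=2$ genuinely occurs and the comparison with the smallest label ($2$ or $3$, depending on the parity of $k$) has to be checked in each subcase --- it does work out, but it is not automatic. Second, the degree-one vanishing for $|i-j|=1$ is only announced, not executed; the dimension count via Lemma~\ref{lemma:dimension shift argument}(i) does give $\dim\operatorname{Ext}^1(M,M)=(k-i+1)-2(k-i)+(k-i-1)=0$ once the three Hom-spaces are computed, but those computations are essentially Lemmas~\ref{lemma: dim of hom-spaces} and~\ref{lemma: dim of hom-spaces in F(Delta)}, which are not free. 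Neither issue is a wrong step, but both need to be written out for the proof to be complete.
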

	\begin{proof}
		We will show that the claim holds for $M=\Omega(i,j,k)$ such that $M\in\mathcal{F}(\nabla)$, but $M\notin \mathcal{F}(\Delta)$. It will then follow, by applying the simple-preserving duality, that the claim also holds for $M$ such that $M\in\mathcal{F}(\Delta)$, but $M\notin\mathcal{F}(\nabla)$. Note that if $i=j=1$, then $M\in \mathcal{F}(\Delta)\cap \mathcal{F}(\nabla)$, and if $i=j>1$, then $M\notin \mathcal{F}(\Delta)\cup \mathcal{F}(\nabla)$. Thus, we must have $|i-j|\geq 1$.
		
		Assume that $|i-j|=1$. 
		Consider the surjection $T(k) \tto M$ (which is unique up to a scalar) and denote the kernel of this projection by $K$. If $i,j>1$, then the kernel $K$ can be written as a direct sum of two indecomposable modules $U\oplus L$, where $U=\Omega(1,i-1,i-1)$ and $L=\Omega(1,j-1,j-1)$. If $i=1$ and $j=2$, or $i=2$ and $j=1$, then the kernel $K$ is equal to $L(1)$. This gives us a short exact sequence $$K \hookrightarrow T(k) \tto M.$$
		By applying $\operatorname{Hom}_{\Lambda_n}(\blank,M)$, we get a long exact sequence
		\begin{align*}
			0&\rightarrow \operatorname{Hom}_{\Lambda_n}(M,M)\rightarrow \operatorname{Hom}_{\Lambda_n}(T(k),M)\rightarrow \operatorname{Hom}_{\Lambda_n}(K,M)  \\
			&\rightarrow \operatorname{Ext}_{\Lambda_n}^1(M,M) \rightarrow \operatorname{Ext}_{\Lambda_n}^1(T(k),M) \rightarrow \operatorname{Ext}_{\Lambda_n}^1(K,M) \\
			&\rightarrow \operatorname{Ext}_{\Lambda_n}^2(M,M) \rightarrow \operatorname{Ext}_{\Lambda_n}^2(T(k),M) \rightarrow \operatorname{Ext}_{\Lambda_n}^2(K,M) \rightarrow \cdots 
		\end{align*}
		Since $M\in\mathcal{F}(\nabla)$ and $T(k)\in\mathcal{F}(\Delta)$, we have $\operatorname{Ext}_{\Lambda_n}^m(T(k),M)=0$ for all $m>0$. This implies that \[\operatorname{Ext}_{\Lambda_n}^m(M,M)\cong \operatorname{Ext}_{\Lambda_n}^{m-1}(K,M),\] for all $m\geq 2$. 
		
		Note that the only common composition factor of $K$ and $M$ is $L(x)$, where $x=\min(i,j)$. However, $L(x)$ is not a submodule of $M$, which means that there are no non-zero homomorphisms from $K$ to $M$. Together with the fact that $\operatorname{Ext}^1_{\Lambda_{n}}(T(k),M)=0$, this implies $\operatorname{Ext}_{\Lambda_n}^1(M,M)=0$.

		If $i=1$ and $j=2$, or $i=2$ and $j=1$, then $\operatorname{Ext}_{\Lambda_{n}}^m(M,M)\cong\operatorname{Ext}_{\Lambda_n}^{m-1}(K,M)=0$, for all $m\geq 2$, since $K=L(1)\in\mathcal{F}(\Delta)$. 
		Now assume that $i,j>1$. By Lemma \ref{lemma:projective covers and kernels}, the kernel $J$ of the projective cover of $K=U\oplus L$ is equal to $\Delta_{(2,i-3]}\oplus \Delta_{(2,j-3]}\oplus L(i-2)\oplus L(j-2)$, where $L(x)$ is interpreted as 0 if $x<1$. Using Lemma \ref{lemma:Ext from simples to F(nabla) is zero} and the fact that $M\in \mathcal{F}(\nabla)$, this implies that $\operatorname{Ext}_{\Lambda_n}^{m-2}(J,M)=0.$
		It now follows that
		$$\operatorname{Ext}_{\Lambda_n}^m(M,M)\cong \operatorname{Ext}_{\Lambda_n}^{m-1}(K,M)\cong \operatorname{Ext}_{\Lambda_n}^{m-2}(J,M)=0,$$
		for all $m\geq 2$. Note that the second isomorphism is obtained by applying Lemma \ref{lemma:dimension shift argument}. This proves the claim that, if $|i-j|=1$, then $\Omega(i,j,k)$ is self-orthogonal.

		Assume that $|i-j|>1$.
		The projective cover of $M$ will be $P_{[i,k-1]}\oplus P_{(j,k-2]}$ and the corresponding kernel will be $\Delta_{(i+1,k-2]}\oplus \Delta_{(j+1,k-1]}\oplus L(i-1)\oplus L(j-1)$, where $L(x)$ is interpreted as 0 if $x<1$. This, together with Lemma \ref{lemma:dimension shift argument} and the fact that $M\in\mathcal{F}(\nabla)$, implies that \[\operatorname{Ext}_{\Lambda_n}^m(M,M)\cong \operatorname{Ext}_{\Lambda_n}^{m-1}(L(i-1),M)\oplus \operatorname{Ext}_{\Lambda_n}^{m-1}(L(j-1),M),\]
		for all $m\geq 1$.
		
		We will prove that either $\operatorname{Ext}_{\Lambda_n}^{1}(L(i-1),M)\neq 0$, or $\operatorname{Ext}_{\Lambda_n}^{1}(L(j-1),M)\neq 0$, depending on whether $j\leq i-2$ or $i\leq j-2$. We will consider the case when $j\leq i-2$. The case $i\leq j-2$ is proven using the exact same arguments, but with $i$ replaced by $j$.
		
		The beginning of the projective resolution of $L(i-1)$ looks as follows:
		\[
		\xymatrix{
			\cdots \ar[r]^-{d_2} & P(i-2)\oplus P(i) \ar[rr]^-{d_1} \ar[dr] && P(i-1) \ar[r]^-{d_0} & L(i-1) \\
			&& L(i-2)\oplus \Delta(i) \ar[ur]
		}\]
		Since $L(i-2)$ is a submodule of $M$, there is a homomorphism $f:P(i-2)\oplus P(i)\to M$ such that the image of $f$ is isomorphic to $L(i-2)$. Furthermore, the composition $f\circ d_2$ is equal to the zero homomorphism. However, we cannot have $f=g\circ d_1$ for any homomorphism $g:P(i-1)\to M$. Indeed, the kernel of the unique (up to scalar) non-zero map from $P(i-1)$ to $M$ is equal to $L(i-1)$. The image of $g\circ d_1$ therefore contains the submodule $L(i)$. Since the image of $f$ is isomorphic to $L(i-2)$ the two maps cannot be equal. This means that $\operatorname{Ext}_{\Lambda_n}^1(L(i-1),M)\neq 0$, which implies that $\operatorname{Ext}_{\Lambda_n}^2(M,M)\neq 0$.
	\end{proof}
	
	\section{Generalized tilting modules}
	\subsection{Background of generalized tilting modules} \label{subsection: generalized tilting modules}
	The main goal of this article is to classify all generalized tilting $\Lambda_n$-modules. Classical tilting modules were first introduced by \cite{BB80, HR82} and were later generalized by Miyashita in \cite{Mi86}. Recall that for a $\Lambda_n$-module $M$, $\operatorname{add}M$ denotes the full subcategory of $\Lambda_n\operatorname{-mod}$ consisting of direct summands of finite direct sums of $M$.
	
	\begin{definition}\cite{Mi86}
		Let $\Lambda$ be an algebra and let $T$ be a $\Lambda$-module. Then, $T$ is called a \emph{generalized tilting module} if
		\begin{enumerate}
			\item[(T1)] $T$ has finite projective dimension;
			\item[(T2)] $\operatorname{Ext}_{\Lambda}^m(T,T)=0$ for all $m>0$;
			\item[(T3)] there is an exact sequence
			$$\xymatrix{
				0\ar[r]& \Lambda \ar[r] & Q_0\ar[r] & Q_1 \ar[r] & \dots \ar[r]& Q_r\ar[r] & 0
			}$$
			such that $Q_i \in \operatorname{add} T$ for all $0\leq i\leq r$.
		\end{enumerate}
	\end{definition}
	
	Recall that every quasi-hereditary algebra has finite global dimension, so (T1) is satisfied for every $\Lambda_n$-module.

	\begin{theorem}\cite{rickard_schofield_1989} \label{theorem:RS}
		Let $\Lambda$ be an algebra of finite representation type and let $T$ be a $\Lambda$-module satisfying the first two properties of a generalized tilting module:
		\begin{enumerate}
			\item[(T1)]
			$T$ has finite projective dimension;
			\item[(T2)]
			$\operatorname{Ext}_\Lambda^m(T,T)=0$, for all $m>0$.
		\end{enumerate}
		Then, there is  a $\Lambda$-module $S$, such that $T\oplus S$ is a generalized tilting module. 
	\end{theorem}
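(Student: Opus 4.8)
The plan is to obtain the completion of $T$ directly from the Auslander--Reiten correspondence between functorially finite coresolving subcategories of $\operatorname{mod}\Lambda$ and generalized tilting modules. Put
\[
\mathcal{D}\;=\;\{\,X\in\operatorname{mod}\Lambda \;:\; \operatorname{Ext}_\Lambda^m(T,X)=0 \text{ for all } m>0\,\}.
\]
The first step is to record the structure of $\mathcal{D}$. From the long exact sequence in $\operatorname{Ext}_\Lambda(T,-)$ one checks at once that $\mathcal{D}$ is closed under direct summands and extensions, contains every injective $\Lambda$-module, and is closed under cokernels of monomorphisms with both terms in $\mathcal{D}$; thus $\mathcal{D}$ is a coresolving subcategory. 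By (T2) we have $T\in\mathcal{D}$, and by the very definition of $\mathcal{D}$ we have $\operatorname{Ext}_\Lambda^m(T,X)=0$ for all $X\in\mathcal{D}$ and $m>0$, so $T$ is an $\operatorname{Ext}$-projective object of $\mathcal{D}$. Finally, since $\Lambda$ is of finite representation type, every subcategory of $\operatorname{mod}\Lambda$ closed under summands is the additive closure of a finite set of indecomposables, hence functorially finite; in particular $\mathcal{D}$ is covariantly finite.

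The second step uses (T1) to show that every $\Lambda$-module has a finite coresolution by objects of $\mathcal{D}$. Write $d=\operatorname{pd}_\Lambda T$; we may assume $d\ge1$, since if $T$ is projective then $\mathcal{D}=\operatorname{mod}\Lambda$ and the assertion is immediate. For a module $M$ let $0\to M\to I^0\to I^1\to\cdots$ be a minimal injective coresolution, with cosyzygies $C^0=M$ and $C^j=\operatorname{coker}(I^{j-2}\to I^{j-1})$ for $j\ge1$. Dimension shift along the short exact sequences $0\to C^{j-1}\to I^{j-1}\to C^j\to0$, in which the $I^{j-1}$ are injective, gives $\operatorname{Ext}_\Lambda^m(T,C^j)\cong\operatorname{Ext}_\Lambda^{m+j}(T,M)$ for all $m\ge1$ and $j\ge0$; taking $j=d$ and using $m+d>d=\operatorname{pd}_\Lambda T$ yields $\operatorname{Ext}_\Lambda^m(T,C^d)=0$ for all $m\ge1$, i.e.\ $C^d\in\mathcal{D}$. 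Hence $0\to M\to I^0\to\cdots\to I^{d-1}\to C^d\to0$ is a finite coresolution of $M$ with all terms in $\mathcal{D}$.

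In the third step I would invoke the Auslander--Reiten correspondence for functorially finite coresolving subcategories (the tilting analogue of their cotilting correspondence for contravariantly finite resolving subcategories): for a covariantly finite coresolving subcategory $\mathcal{D}$ of $\operatorname{mod}\Lambda$ such that every module admits a finite $\mathcal{D}$-coresolution, the $\operatorname{Ext}$-projective objects of $\mathcal{D}$ form $\operatorname{add}(T')$ for a basic generalized tilting module $T'$, and $\mathcal{D}=\{X:\operatorname{Ext}_\Lambda^{>0}(T',X)=0\}$. Both hypotheses have just been verified for our $\mathcal{D}$, so such a $T'$ exists. Since $T$ is an $\operatorname{Ext}$-projective object lying in $\mathcal{D}$, it follows that $T\in\operatorname{add}(T')$; discarding superfluous repeated summands we may write $T'\cong T\oplus S$, and then $T\oplus S$ is a generalized tilting module, which proves the claim.

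The main obstacle is the Auslander--Reiten correspondence invoked in the third step, which is itself a substantial theorem. A more self-contained alternative would be to enlarge $T$ one indecomposable summand at a time via Bongartz-type universal extensions --- finite representation type furnishes the functorial finiteness needed at each stage --- halting once the number of pairwise non-isomorphic indecomposable summands equals the number of simple $\Lambda$-modules, whereupon a counting argument forces (T3); the subtlety there is that the universal extension must be taken relative to $\operatorname{add}(T)$-approximations, since for $\operatorname{pd}_\Lambda T>1$ the naive Bongartz extension need not preserve (T2).
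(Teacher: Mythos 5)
The paper does not prove this statement --- it is quoted from Rickard--Schofield without argument --- so the only question is whether your proof stands on its own, and it does. The three verifications are all sound: $\mathcal{D}=T^{\perp}$ is coresolving by the long exact sequence; finite representation type makes any full additive subcategory closed under summands equal to $\operatorname{add}$ of a single module, hence functorially finite; and the cosyzygy dimension shift correctly shows $C^{d}\in\mathcal{D}$ for $d=\operatorname{pd}_{\Lambda}T$, so $\check{\mathcal{D}}=\operatorname{mod}\Lambda$. These are exactly the hypotheses of the tilting-side Auslander--Reiten correspondence, whose conclusion (the Ext-projectives of $\mathcal{D}$ are $\operatorname{add}(T')$ for a basic generalized tilting $T'$) then yields $T\in\operatorname{add}(T')$ and the desired complement. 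The one caveat worth recording is that the entire weight of the argument rests on the Auslander--Reiten correspondence, which is itself a substantial theorem and in fact postdates the cited Rickard--Schofield paper; their original argument is more direct, building the complement from a finite ``cocover'' of $T^{\perp}$ rather than passing through the full classification of covariantly finite coresolving subcategories. Both routes locate the role of finite representation type in exactly the same place, namely the functorial finiteness of $T^{\perp}$, which is the point that fails for general algebras and general self-orthogonal modules of finite projective dimension. Your closing remark about a Bongartz-style alternative via $\operatorname{add}(T)$-relative universal extensions, and the warning that the naive Bongartz extension does not preserve (T2) when $\operatorname{pd}_{\Lambda}T>1$, is also accurate.
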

	\begin{corollary}\cite{rickard_schofield_1989} \label{corollary:theorem:RS}
		Let $\Lambda$ be an algebra of finite representation type and let $T$ be a $\Lambda$-module satisfying:
		\begin{enumerate}
			\item[(T1)] $T$ has finite projective dimension;
			\item[(T2)] $\operatorname{Ext}_\Lambda^m(T,T)=0$, for all $m>0$;
			\item[(T3$^\prime$)] $T$ has $n$ non-isomorphic indecomposable direct summands.
		\end{enumerate}
		Then, $T$ is a generalized tilting module.
	\end{corollary}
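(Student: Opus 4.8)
The plan is to deduce Corollary~\ref{corollary:theorem:RS} from Theorem~\ref{theorem:RS} together with the classical fact that any generalized tilting module over an algebra with $n$ isomorphism classes of simple modules has exactly $n$ pairwise non-isomorphic indecomposable direct summands (see \cite{Mi86}); here $n$ is precisely the integer appearing in (T3$^\prime$).

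First I would apply Theorem~\ref{theorem:RS} to $T$: since $\Lambda$ has finite representation type and $T$ satisfies (T1) and (T2), there is a $\Lambda$-module $S$ such that $T\oplus S$ is a generalized tilting module. The whole content of the corollary then reduces to showing that $S$ contributes no new indecomposable summands, i.e.\ that $\operatorname{add}(T\oplus S)=\operatorname{add}T$.

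For this I would compare numbers of non-isomorphic indecomposable summands. On the one hand, being a generalized tilting module, $T\oplus S$ has exactly $n$ pairwise non-isomorphic indecomposable summands. On the other hand, by (T3$^\prime$) the module $T$ has $n$ pairwise non-isomorphic indecomposable summands, each of which is also an indecomposable summand of $T\oplus S$. Hence, up to isomorphism, the indecomposable summands of $T\oplus S$ are exactly those of $T$; in particular every indecomposable summand of $S$ is isomorphic to one of $T$, so $\operatorname{add}(T\oplus S)=\operatorname{add}T$. (If one prefers not to invoke the exact count, the upper bound suffices: the indecomposable summands of a module satisfying (T1) and (T2) are linearly independent in $K_0(\Lambda)$, so there are at most $n=\operatorname{rk}K_0(\Lambda)$ of them; applied to $T\oplus S$ this, combined with (T3$^\prime$), again forces the equality of additive closures.)

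It then remains to verify the defining conditions of a generalized tilting module for $T$ itself. Conditions (T1) and (T2) hold by hypothesis, so only (T3) needs attention: the generalized tilting module $T\oplus S$ admits an exact sequence $0\to\Lambda\to Q_0\to\cdots\to Q_r\to 0$ with each $Q_i\in\operatorname{add}(T\oplus S)=\operatorname{add}T$, and this very sequence witnesses (T3) for $T$. Hence $T$ is a generalized tilting module. The only genuine input beyond bookkeeping with additive closures is the counting principle for indecomposable summands of (partial) tilting modules; I expect that, rather than any delicate calculation, to be the step to state carefully.
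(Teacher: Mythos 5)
Your derivation is correct: the paper itself gives no proof of this corollary (it is simply cited from Rickard--Schofield), and your argument --- apply Theorem~\ref{theorem:RS} to obtain a complement $S$, then use the fact that a generalized tilting module has exactly $n$ pairwise non-isomorphic indecomposable summands (equivalently, the $K_0$-independence bound) to conclude $\operatorname{add}(T\oplus S)=\operatorname{add}T$, so that the coresolution of $\Lambda$ witnessing (T3) for $T\oplus S$ also witnesses it for $T$ --- is exactly the standard deduction the citation is implicitly relying on. The only external input is the counting principle for summands of (partial) tilting modules, which you correctly isolate and attribute.
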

	
	Corollary \ref{corollary:theorem:RS} implies that to classify generalized tilting modules, it is enough to classify all collections of $n$ indecomposable self-orthogonal modules such that all extensions (of positive degree) between each pair of modules vanish.

	\subsection{Non-zero extensions between modules in $\mathcal{F}(\nabla)$ and $\mathcal{F}(\Delta)$}
	The aim of this subsection is to prove that for any self-orthogonal modules $M\in \mathcal{F}(\nabla)$ and $N\in \mathcal{F}(\Delta)$, there is a non-zero extension, from $M$ to $N$, of positive degree. This reduces the problem of classifying all generalized tilting $\Lambda_n$-modules to finding all generalized tilting modules in $\mathcal{F}(\Delta)$ and then, by using the simple-preserving duality, obtaining all generalized tilting modules in $\mathcal{F}(\nabla)$.

	\begin{proposition}\label{proposition:non-zero extensions between F(Delta) and F(nabla)}
		Let $M\in \mathcal{F}(\nabla)$ and $N\in \mathcal{F}(\Delta)$ be indecomposable $\Lambda_{n}$-modules such that $M$ and $N$ are self-orthogonal and $M,N\not\in \mathcal{F}(\Delta)\cap \mathcal{F}(\nabla)$. Then, $\operatorname{Ext}_{\Lambda_n}^m(M,N)\neq 0$, for some $m\geq 1$.
	\end{proposition}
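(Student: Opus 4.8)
Here is a plan of proof.

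The plan is to reduce, via the dimension shifting of Lemma~\ref{lemma:dimension shift argument}, to a non-vanishing $\operatorname{Ext}$-group between two simple modules, with one residual family of pairs treated separately but still by dimension shifting. First, since neither $M$ nor $N$ lies in $\mathcal{F}(\Delta)\cap\mathcal{F}(\nabla)$, Proposition~\ref{proposition:characterization of self-orthogonal modules} lets me write $M=\Omega(a,b,c)$ and $N=\Omega(a',b',c')$ with $|a-b|=|a'-b'|=1$, and moreover $M\notin\mathcal{F}(\Delta)$ and $N\notin\mathcal{F}(\nabla)$. Let $K$ be the kernel of the projective cover $P\tto M$ and $C$ the cokernel of the injective envelope $N\hookrightarrow I$. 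By Lemma~\ref{lemma:M not in F(delta) iff kernel has simple direct summand} and Lemma~\ref{lemma:M not in F(nabla) iff cokernel has simple direct summand} both $K$ and $C$ have a simple direct summand, and the table in Lemma~\ref{lemma:projective covers and kernels} (together with its dual obtained from the simple-preserving duality) records exactly which simples occur: the simple summands of $K$ lie among $L(a-1),L(b-1)$ and those of $C$ among $L(a'-1),L(b'-1)$, with the convention $L(0)=0$. Fixing a simple summand $L(x)$ of $K$ and $L(y)$ of $C$, Lemma~\ref{lemma:dimension shift argument}(iii) gives $\operatorname{Ext}_{\Lambda_n}^m(M,N)\cong\operatorname{Ext}_{\Lambda_n}^{m-2}(K,C)$ for all $m\geq 3$, and the right-hand side has $\operatorname{Ext}_{\Lambda_n}^{m-2}(L(x),L(y))$ as a direct summand.

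If $L(x)$ and $L(y)$ can be chosen with $(x,y)\neq(1,1)$, then Lemma~\ref{lemma:Ext between simples is non-zero} produces an integer $\ell\geq 1$ (take $\ell=|x-y|$, or $\ell=2$ if $x=y$) with $\operatorname{Ext}_{\Lambda_n}^\ell(L(x),L(y))\neq 0$, so with $m=\ell+2\geq 3$ we get $\operatorname{Ext}_{\Lambda_n}^m(M,N)\neq 0$ and we are done. When $a,b>1$ the condition $M\in\mathcal{F}(\nabla)$ forces $a\not\equiv c$ and $b\equiv c\pmod 2$, so by the table $K$ has \emph{both} $L(a-1)$ and $L(b-1)$ as summands, and since $\{a,b\}=\{t,t+1\}$ with $t\geq 2$ the summand $L(\max(a,b)-1)=L(t)$ is different from $L(1)$; the analogous statement holds for $C$ and $N$. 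Hence the choice $(x,y)\neq(1,1)$ fails only when $K$ has $L(1)$ as its unique simple summand and $C$ has $L(1)$ as its unique simple summand, which — scanning the rows of the table with a single simple summand — happens precisely when $\{a,b\}=\{1,2\}$ and $\{a',b'\}=\{1,2\}$. Every other pair is settled by the paragraph above.

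It remains to treat the case $\{a,b\}=\{1,2\}=\{a',b'\}$, so that $N$ is one of $\Omega(1,2,c')$ or $\Omega(2,1,c')$ (with the parity of $c'$ forced by $N\in\mathcal{F}(\Delta)$), and likewise for $M$. Here $L(1)$ is still a direct summand of $K$, but now I would work with the pair $(L(1),N)$ directly. Since $N\in\mathcal{F}(\Delta)$ with $|a'-b'|=1$, its standard subquotients are $\Delta(2),\Delta(3),\dots,\Delta(c')$, each with multiplicity one, so $[N:L(z)]$ equals $1$ for $z\in\{1,c'\}$, equals $2$ for $2\leq z\leq c'-1$, and $0$ otherwise; feeding this into the projective resolution of $L(1)$ from Lemma~\ref{lemma:projective resolution of simple modules} gives
\[
\sum_{t\geq 0}(-1)^t\dim\operatorname{Ext}_{\Lambda_n}^t(L(1),N)=\sum_{t\geq 0}(-1)^t[N:L(t+1)]=1-2+2-\dots\pm 1=0 .
\]
On the other hand $\operatorname{Hom}_{\Lambda_n}(L(1),N)\neq 0$, because in each of these four modules for $N$ the composition factor $L(1)$ sits at an end of the corresponding string and that end is a sink, i.e.\ $L(1)\in\operatorname{soc}N$. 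Combining the two facts forces $\operatorname{Ext}_{\Lambda_n}^t(L(1),N)\neq 0$ for some $t\geq 1$, and then Lemma~\ref{lemma:dimension shift argument}(ii) yields $\operatorname{Ext}_{\Lambda_n}^{t+1}(M,N)\cong\operatorname{Ext}_{\Lambda_n}^{t}(K,N)\supseteq\operatorname{Ext}_{\Lambda_n}^{t}(L(1),N)\neq 0$, completing the argument. I expect the bulk of the work to be bookkeeping: correctly reading off the simple direct summands of $K$ and of $C$ from the table of Lemma~\ref{lemma:projective covers and kernels} in each parity configuration (this is the only place the fine structure of $M$ and $N$ is used), and verifying the socle statement $L(1)\in\operatorname{soc}N$ for the four exceptional families.
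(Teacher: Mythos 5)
Your argument is correct and follows the paper's proof in its main reduction: both split off the case where the only available simple summands of $K$ and $C$ are $L(1)$ (your $\{a,b\}=\{a',b'\}=\{1,2\}$ is exactly the paper's $i=j=1$), and in all other cases both conclude via $\operatorname{Ext}^m_{\Lambda_n}(M,N)\cong\operatorname{Ext}^{m-2}_{\Lambda_n}(K,C)$ together with Lemma~\ref{lemma:Ext between simples is non-zero}. Where you genuinely diverge is in the residual case: the paper exhibits an explicit $1$-cocycle, namely a map $f:P(2)\to N$ with image $L(2)$ satisfying $f\circ d_2=0$ but $f\neq g\circ d_1$ (treating $N=\Delta(2)$ separately via the non-split sequence $\Delta(2)\hookrightarrow P(1)\tto L(1)$), thereby pinning down $\operatorname{Ext}^1_{\Lambda_n}(L(1),N)\neq 0$ and hence a non-zero extension in degree exactly $2$. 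You instead use the Euler-characteristic identity $\sum_t(-1)^t\dim\operatorname{Ext}^t_{\Lambda_n}(L(1),N)=\sum_z(-1)^{z-1}[N:L(z)]=0$, valid since the resolution of $L(1)$ from Lemma~\ref{lemma:projective resolution of simple modules} is finite with $Q_m=P(m+1)$, combined with $L(1)\subseteq\operatorname{soc}N$, to force some positive-degree $\operatorname{Ext}$ to be non-zero. Your route avoids the cocycle computation and is arguably cleaner, at the cost of not identifying the degree of the non-vanishing extension; since the proposition only asserts existence of some $m\geq 1$, this loss is immaterial here.
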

	\begin{proof}
		By Proposition \ref{proposition:characterization of self-orthogonal modules}, $M$ and $N$ must be of one of the following forms. 
		\begin{enumerate}[$\bullet$]
			\item
			$M=\Omega(i,i+1,k)$ if $i\equiv k \mod 2$;
			\item
			$M=\Omega(i+1,i,k)$ if $i\not\equiv k \mod 2$;
			\item
			$N=\Omega(j+1,j,\ell)$ if $j\equiv \ell \mod 2$;
			\item
			$N=\Omega(j,j+1,\ell)$ if $j\not\equiv \ell \mod 2$.
		\end{enumerate}
		
		By Lemma \ref{lemma:projective covers and kernels}, the kernel $K$ of the projective cover of $M$ is equal to $\bigoplus_{x=i+1}^{k-1}\Delta(x)\oplus L(i-1)\oplus L(i)$, where $L(x)$ is interpreted as zero if $x<1$.
		By dualizing Lemma \ref{lemma:projective covers and kernels}, the cokernel $C$ of the injective envelope of $N$ is equal to $\bigoplus_{x=j+1}^{\ell-1}\nabla(x)\oplus L(j-1)\oplus L(j)$, where $L(x)$ is interpreted as zero if $x<1$.

		Unless $i=j=1$, using Lemma \ref{lemma:dimension shift argument} together with Lemma \ref{lemma:Ext between simples is non-zero}, we get $\operatorname{Ext}_{\Lambda_n}^m(M,N)\cong\operatorname{Ext}_{\Lambda_n}^{m-2}(K,C)\neq 0$, for some $m>0$.
		Assume that $i=j=1$. Since $i=1$, as mentioned before, the kernel $K$ of the projective cover of $M$ is equal to $\bigoplus_{x=2}^{k-1}\Delta(x)\oplus L(1)$. The beginning of the projective resolution of $L(1)$ looks as follows: 
		\[
		\xymatrix{
			\cdots \ar[r] & P(3) \ar[rr]^{d_2} \ar[dr] && P(2) \ar[rr]^{d_1} \ar[dr] && P(1) \ar[r] & L(1) \\
			&& \Delta(3) \ar[ur] && \Delta(2) \ar[ur]
		}\]
		If $\ell=2$, then $N=\Delta(2)$ and we a non-split short exact sequence $$\Delta(2)\hookrightarrow P(1) \tto L(1).$$ This, together with Lemma~\ref{lemma:dimension shift argument}, implies that there is a non-zero extension of degree two from $M$ to $N$.
		
		If $\ell>2$, there is a homomorphism $f:P(2)\to N$, whose image is isomorphic to $L(2)$ and annihilates $\operatorname{rad}P(2)$. Since the image of the differential $d_2$ is contained in $\operatorname{rad}P(2)$, we have $f\circ d_2=0$. However, the unique (up to scalar) homomorphism $g:P(1)\to N$ is such that $L(2)$ does not occur as a composition factor in the image, so that we must have $f\neq g\circ d_1$. Therefore, we have a non-zero extension of degree one from $L(1)$ to $N$. But this, together with Lemma~\ref{lemma:dimension shift argument}, implies that there is a non-zero extension of degree two from $M$ to $N$. This proves the claim. 
	\end{proof}

	\subsection{A strict partial order on $\mathcal{F}(\Delta)$}
	We know that the only generalized tilting module contained in $\mathcal{F}(\Delta)\cap \mathcal{F}(\nabla)$ is the characteristic tilting module. Moreover, we have shown that if $M\in \mathcal{F}(\nabla)$, $N\in \mathcal{F}(\Delta)$ and $M,N\notin \mathcal{F}(\Delta)\cap \mathcal{F}(\nabla)$, then $\operatorname{Ext}_{\Lambda_n}^m(M,N)\neq 0$ for some $m\geq 1$. This implies that any generalized tilting module must be contained in either $\mathcal{F}(\Delta)$ or $\mathcal{F}(\nabla)$. Together, these statements imply that it is enough to find the basic generalized tilting modules contained in $\mathcal{F}(\Delta)$, which do not equal the characteristic tilting module. All basic generalized tilting modules will then be these modules, their duals (with respect to the simple preserving duality), and the characteristic tilting module.

	Let $\mathcal{D}_n$ denote the set of indecomposable self-orthogonal modules in $\mathcal{F}(\Delta)$.
	Next, we define the relation $\prec_E$ on $\mathcal{D}_n$ given by $M\prec_E N$ if and only if $\operatorname{Ext}^m(M,N)\neq 0$ for some $m\geq 1$.
	Note that it is not clear from the definition whether this relation is transitive. 
	
	Let $Q_n$ be the set of pairs $(i,k)$, with $1\leq i\leq k\leq n$. We want these pairs to encode the indecomposable self-orthogonal modules contained in $\mathcal{F}(\Delta)$. Let $M(i,k)$ be the module
	
	$$M(i,k)=\begin{cases}
		\Omega(1,1,k), & \text{if } i=1;\\
		\Omega(i,i-1,k), & \text{if } i>1 \text{ and } i\equiv k\mod 2;\\
		\Omega(i-1,i,k), & \text{if } i>1 \text{ and } i\not\equiv k\mod 2.
	\end{cases}$$
	This defines a bijection $\varphi:Q_n \rightarrow \mathcal{D}_n$ given by $(i,k)\mapsto M(i,k)$.
	
	We define the following relation on $Q_n$:
	$$(i,i)\prec_0 (i+1,\ell), \text{ for } \ell=i+1,\dots,n.$$
	For $k=1,2,\dots$ we have the additional relations
	\begin{align*}
		(i,i+2k)&\prec_0 (i+1,i+1+2\ell), &\text{ for } &\ell=0,1,\dots,k-1; \\
		(i,i+2k)&\prec_0 (i+1,\ell), &\text{ for } &\ell=i+2k+1, i+2k+2, \dots, n;\\
		(i,i+2k+1)&\prec_0 (i+1,i+1+2\ell), &\text{ for } &\ell=0,1,\dots, k-1.
	\end{align*}
	Now define $\prec$ to be the transitive closure of $\prec_0$. This defines a strict partial order on $Q_n$. By a \emph{strict partial order} on a set, we mean a relation which is transitive, asymmetric and irreflexive. The set $Q_n$ is naturally graded by $\deg(i,k)=i$, and from the definition it is clear that this grading, together with the strict partial order $\prec$, makes $Q_n$ into a graded poset.
	
	Note that, since we have the relations $(i,k)\prec_0 (i+1,i+1)$, if $k\neq i+1$, and $(i+1,i+1)\prec_0 (i+2,\ell)$, for all $\ell\geq j+2$, it follows that, if $k\neq i+1$, then $$(i,k)\prec (j,\ell),$$
	for every $j\geq i+2$ and every $\ell\geq j$. 
	
	The aim of the next subsection is to prove the following theorem:
	\begin{theorem}\label{theorem:order isomorphism}
		The bijection $\varphi:Q_n \rightarrow \mathcal{D}_n$ defined above is an order isomorphism between $(Q_n,\prec)$ and $(\mathcal{D}_n, \prec_E)$.
	\end{theorem}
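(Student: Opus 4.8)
The plan is to establish the two directions of an order isomorphism separately: first that $\varphi$ is order-reflecting, meaning $(i,k) \prec (j,\ell)$ implies $M(i,k) \prec_E M(j,\ell)$, and second that $\varphi$ is order-preserving in the reverse sense, meaning $M(i,k) \prec_E M(j,\ell)$ implies $(i,k) \prec (j,\ell)$. Since $\prec$ is the transitive closure of $\prec_0$ and $\prec_E$ is transitive (this is exactly the content one would have to verify, so presumably it follows from Theorem~\ref{theorem:order isomorphism} itself, or is proven en route), it suffices for the first direction to check that each generating relation $(i,k) \prec_0 (j,\ell)$ yields a non-zero extension $\operatorname{Ext}^m(M(i,k), M(j,\ell)) \neq 0$ for some $m \geq 1$. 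There are four families of generating relations to treat; in each case I would compute the projective cover and kernel of $M(i,k)$ using Lemma~\ref{lemma:projective covers and kernels}, apply the dimension-shift isomorphisms of Lemma~\ref{lemma:dimension shift argument}, and reduce the non-vanishing to one of the known non-vanishing statements --- Proposition~\ref{proposition:ext between standards and costandards}, Lemma~\ref{lemma:Ext between simples is non-zero}, or a direct inspection of Loewy diagrams when a simple summand of the kernel must map into a submodule of the target.

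For the converse direction, the strategy is contrapositive: assuming $(i,k) \not\prec (j,\ell)$, I would show $\operatorname{Ext}^m(M(i,k), M(j,\ell)) = 0$ for all $m \geq 1$. The grading $\deg(i,k) = i$ is the key organizing tool. If $i \geq j$ then, since $\prec$ only ever increases degree (each generating relation goes from degree $i$ to degree $i+1$), we automatically have $(i,k) \not\prec (j,\ell)$, and one expects all extensions to vanish for degree reasons --- the composition factors of $M(i,k)$ all have labels $\geq i \geq j$ while the relevant ``directionality'' of extensions in a quasi-hereditary setting pushes weights upward; I would make this precise by resolving $M(i,k) \in \mathcal{F}(\Delta)$ and noting $\operatorname{Ext}^m(\Delta(a), M(j,\ell))$ controls things, combined with Lemma~\ref{lemma:Ext from simples to F(nabla) is zero}-type vanishing. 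The genuinely delicate range is $i < j$ but $(i,k) \not\prec (j,\ell)$: here the second index $k$ must obstruct the relation. From the explicit list of generating relations, $(i,k)$ fails to reach $(i+1,\ell)$ precisely when $k > i+1$ and $\ell < k$ (roughly: the ``arm length'' $k$ is too long relative to $\ell$), and one must show the corresponding extension vanishes. I would do this by the dimension-shift argument applied to the short exact sequence $K \hookrightarrow P \tto M(i,k)$: the summands of $K$ are standard modules $\Delta(x)$ with $x$ in a controlled range together with possibly a single simple, and I would show each contributes zero to $\operatorname{Ext}^{m-1}(K, M(j,\ell))$ using that $M(j,\ell)$ has no composition factors, or no submodules of the required shape, in the obstructing range.

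The main obstacle I anticipate is the transitivity of $\prec_E$, which is flagged in the text as not obvious. If it is \emph{not} already available, then proving $\varphi$ order-reflecting cannot be reduced to generators, and one must instead argue directly: given $(i,k) \prec (j,\ell)$, choose a chain $(i,k) = (i_0, k_0) \prec_0 (i_1,k_1) \prec_0 \cdots \prec_0 (i_t, k_t) = (j,\ell)$ and splice together the extensions along the chain using Yoneda composition, checking the composite is non-zero --- this requires knowing the relevant extension groups are one-dimensional or that the composites land in the right summand, which is where the characteristic tilting module $T(k) = \Omega(1,1,k)$ and the universal short exact sequences $K \hookrightarrow T(k) \tto M$ from Proposition~\ref{proposition:characterization of self-orthogonal modules} become essential. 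An alternative, cleaner route --- which I would pursue first --- is to establish order-reflecting via generators and order-preserving via the contrapositive \emph{independently}, and then observe that together these two facts force $\prec_E$ to coincide with $\prec$ on $\mathcal{D}_n$, which in particular proves $\prec_E$ transitive as a corollary rather than a prerequisite. The second, slightly less routine obstacle is bookkeeping the four parity cases in the definition of $M(i,k)$ against the four Loewy-diagram cases (a)--(d) for $\Omega(i,j,k)$; this is mechanical but error-prone, and I would isolate it into a single lemma computing, for each $(i,k)$, the socle, top, and the set of composition-factor labels of $M(i,k)$ and of the summands of its projective cover's kernel.
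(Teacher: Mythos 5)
There is a genuine gap in the logic of your preferred ``cleaner'' route. You propose to prove two facts: (A) every generating relation $(i,k)\prec_0(j,\ell)$ yields a non-zero extension, and (B) $(i,k)\not\prec(j,\ell)$ implies all positive-degree extensions from $M(i,k)$ to $M(j,\ell)$ vanish. Fact (B) gives the inclusion $\prec_E\,\subseteq\,\prec$, but (A) and (B) together do \emph{not} give $\prec\,\subseteq\,\prec_E$: a pair with $(i,k)\prec(j,\ell)$ and $j\geq i+2$ is related only through a chain of generators, is not itself a generator, and neither (A) nor (B) produces a non-zero extension for it. Such pairs exist and matter --- for instance $(1,1)\prec(3,3)$ only via $(1,1)\prec_0(2,2)\prec_0(3,3)$, and one must still exhibit $\operatorname{Ext}^2_{\Lambda_n}(\Delta(1),\Delta(3))\neq 0$. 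Without transitivity of $\prec_E$, which you correctly note is unavailable a priori, these pairs fall through the cracks; and your fallback of splicing extensions along a chain via Yoneda composition founders on exactly the difficulty you flag (showing the composites are non-zero), which is why the paper avoids it.

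The missing ingredient is a \emph{direct} non-vanishing argument for $j\geq i+2$, which is Proposition~\ref{proposition: Ext from (i,k) to (j,l) is non-zero for j>=i+2}: when $k\neq i+1$ (so that $M(i,k)$ is not projective) and $j\geq i+3$, Equation~\eqref{equation: dim Ext^m m>=3} reduces $\operatorname{Ext}^m_{\Lambda_n}(M(i,k),M(j,\ell))$ to $\operatorname{Ext}^{m-2}_{\Lambda_n}(\Delta_X,L(j-2)\oplus L(j-1))$, which is non-zero by Lemma~\ref{lemma:Ext from Delta to simple}; when $j=i+2$ one uses $\operatorname{Ext}^2_{\Lambda_n}(M(i,k),M(j,\ell))\cong\operatorname{Ext}^1_{\Lambda_n}(K,M(j,\ell))$ together with the fact that $\Delta(i+1)=M(i+1,i+1)$ is always a direct summand of the syzygy $K$ and $(i+1,i+1)\prec_0(i+2,\ell)$ for every $\ell$. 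With this step added, your treatment of the remaining cases ($j\leq i$ via degree reasons, and $j=i+1$ via dimension shifting and explicit Hom-space dimensions) does match the paper's Propositions~\ref{proposition: no ext from (i,k) to (j,l) if j<=i}, \ref{proposition:edge in Q_n corresponds to non-zero ext} and~\ref{proposition:no edge in Q_n corresponds to zero ext}. A secondary inaccuracy: your description of when $(i,k)\not\prec_0(i+1,\ell)$ (``$k>i+1$ and $\ell<k$'') ignores the parity conditions in the definition of $\prec_0$ --- for example $(1,3)\prec_0(2,2)$ even though $2<3$ --- so the case analysis in the contrapositive direction must follow the parity-sensitive list of generators exactly.
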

	
	Before proving the theorem, let us briefly consider its implications. Let $n=4$. We have the following Hasse diagram for the graded poset $(Q_n,\prec)$.
	
	$$\xymatrix{
		(1,1) \ar[rd] \ar[rrd] \ar[rrrd] & (1,2) & (1,3) \ar[rd]\ar[ld] & (1,4) \ar[lld]\\
		& (2,2) \ar[rd] \ar[rrd]& (2,3) & (2,4) \ar[ld]\\
		& & (3,3) \ar[rd]& (3,4)\\
		& & & (4, 4)	
	}$$
	
	We view the above picture as a graph, with the vertices $(i,k)$ corresponding to indecomposable self-orthogonal modules in $\mathcal{F}(\Delta)$. We draw an edge $(i,k)\to (j, \ell)$ if and only if $(i,k) \prec (j, \ell)$, which, as we will show, holds if and only if there is a non-zero extension between the corresponding modules. 
	
	With the theorem established, we will be able to find all generalized tilting modules over $\Lambda_n$ which are contained in $\mathcal{F}(\Delta)$ by finding all anti-chains of length $n$ in the above graph. To see this, note that an anti-chain of length $n$ corresponds exactly to a self-orthogonal module of finite projective dimension, having $n$ indecomposable summands. Such a module is a generalized tilting module by, Corollary~\ref{corollary:theorem:RS}.
	
	In the picture above, we can simply read off the anti-chains from the picture. We see that
	\begin{gather*}
		\{(1,1), (1,2), (1,3), (1,4)\},\quad \{(1,2), (1,3), (1,4), (2,3)\},\\
		\{(1,2), (2,2), (2,3), (2,4)\},\quad \{(1,2), (1,4), (2,3), (2,4)\}, \\
		\{(1,2), (2,3), (3,3), (3,4)\},\quad \{(1,2), (2,3), (2,4), (3,4)\} \\
		\text{and}\quad \{(1,2), (2,3), (3,4), (4,4)\}
	\end{gather*}
	are all possible anti-chains. The first anti-chain corresponds to the characteristic tilting module, and the last anti-chain corresponds to the module $P(1)\oplus P(2)\oplus P(3)\oplus P(4)$.
	
	\subsection{Proof of Theorem~\ref{theorem:order isomorphism}}
	Let $M,N$ be two $\Lambda_n$-modules. Consider the following two exact sequences, where $K$ is the kernel of the projective cover $P\tto M$ and $C$ is the cokernel of the injective envelope $N\hookrightarrow I$:
	$$K\hookrightarrow P \tto M, \quad N\hookrightarrow I \tto C.$$ 
	
	If $M$ is the module corresponding to $(i,k)$, where $k>i$, then, by Lemma \ref{lemma:projective covers and kernels}, we have $P=\bigoplus_{x=i}^{k-1}P(x)$ and $K=\bigoplus_{x=i+1}^{k-1}\Delta(x)$. 
	The module corresponding to $(i,i)$ is $\Delta(i)$ and, in this case, $P=P(i)$ and $K=\Delta(i+1)$.
	If $N$ is the module corresponding to $(j,\ell)$, then, by looking at $N^\star$ and using Lemma \ref{lemma:projective covers and kernels}, we find that $I=\bigoplus_{x=j-1}^{\ell-1}I(x)$ and $C=\bigoplus_{x=j}^{\ell-1}\nabla(x)\oplus L(j-2)\oplus L(j-1)$, where $L(x)$ is interpreted as zero if $x<1$. 
	
	Recall that, by Lemma \ref{lemma:dimension shift argument}, we have the following.
	
	\begin{enumerate}[(i)]
		\item
		$\dim\operatorname{Ext}_{\Lambda_n}^1(M,N)=\dim\operatorname{Hom}_{\Lambda_n}(M,N)-\dim\operatorname{Hom}_{\Lambda_n}(P,N)+\dim\operatorname{Hom}_{\Lambda_n}(K,N)$;
		\item
		$\operatorname{Ext}_{\Lambda_n}^k(M,N)\cong \operatorname{Ext}_{\Lambda_n}^{k-1}(K,N)\cong \operatorname{Ext}_{\Lambda_n}^{k-1}(M,C)$, for all $k\geq 2$;
		\item
		$\operatorname{Ext}_{\Lambda_n}^k(M,N)\cong \operatorname{Ext}_{\Lambda_n}^{k-2}(K,C)$, for all $k\geq 3$.
	\end{enumerate}

	Using the above, together with Lemma~\ref{lemma:projective covers and kernels}, we get the following equations for the dimensions of extension spaces from $M$ to $N$.
	\begin{align}\label{equation: dim Ext^1}
		\dim\operatorname{Ext}_{\Lambda_n}^1(M,N)=&\dim\operatorname{Hom}_{\Lambda_n}(M,N)-\dim\operatorname{Hom}_{\Lambda_n}(P_X,N) 
		+\dim\operatorname{Hom}_{\Lambda_n}(\Delta_Y,N), 
	\end{align}
	where $X=\{i,i+1,\dots,k-1\}$, $Y=\{i+1,i+2,\dots, k-1\}$, if $k>i$ and $X=\{i\}$, $Y=\{i+1\}$, if $k=i$.
	\begin{align}\label{equation: dim Ext^2}
		\dim\operatorname{Ext}_{\Lambda_n}^2(M,N)=&\dim\operatorname{Hom}_{\Lambda_n}(\Delta_X,N)-\dim\operatorname{Hom}_{\Lambda_n}(P_X,N) 
		+\dim\operatorname{Hom}_{\Lambda_n}(\Delta_Y,N), 
	\end{align}
	where $X=\{i+1,i+2,\dots,k-1\}$, $Y=\{i+2,i+3,\dots, k\}$, if $k>i$ and $X=\{i+1\}$, $Y=\{i+2\}$, if $k=i$.
	\begin{align}\label{equation: dim Ext^m m>=3}
		\dim\operatorname{Ext}_{\Lambda_n}^m(M,N)=\dim\operatorname{Ext}_{\Lambda_n}^{m-2}(\Delta_X,L(j-2)\oplus L(j-1)),
	\end{align}
	where $m\geq 3$, $X=\{i,i+1,\dots,k-1\}$, if $k>i$, and $X=\{i\}$, if $k=i$. Note that for the last equality we have used the fact that $\operatorname{Ext}_{\Lambda_n}^m(\Delta(x),\nabla(y))=0$, for all $m>0$.
	
	These equations indicate that we need to find the dimensions of various homomorphism spaces, as well as determining when we have a non-zero extension of positive degree from a standard module to a simple module, or not.

	\begin{lemma}\label{lemma: dim of hom-spaces}
		We have the following dimensions for the homomorphism spaces from a projective module to $M(i,k)$ and from a standard module to $M(i,k)$, respectively:
		\[
		\dim\operatorname{Hom}_{\Lambda_n}(P(x),M(i,k))=
		\begin{cases} 
			1, & \text{if } x=i-1 \lor x=k; \\
			2, & \text{if } i\leq x \leq k-1; \\
			0, & \text{if } x<i-1 \lor x>k.
		\end{cases}
		\]
		\[
		\dim\operatorname{Hom}_{\Lambda_n}(\Delta(x),M(i,k))=
		\begin{cases} 
			1, & \text{if } i-1\leq x \leq k; \\
			0, & \text{if } x<i-1 \lor x>k.
		\end{cases}
		\]
	\end{lemma}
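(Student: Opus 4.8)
The plan is to compute the two homomorphism spaces directly from the explicit Loewy diagrams of the modules $M(i,k)$, $P(x)$, and $\Delta(x)$, using the fact that $\Lambda_n$ is a string algebra, so that $\operatorname{Hom}$-spaces between string modules have a combinatorial basis indexed by "admissible" overlaps of the corresponding $V$-sequences (graph maps). First I would recall that, pictorially, $M(i,k)$ is one of the modules $\Omega(1,1,k)$, $\Omega(i,i-1,k)$ or $\Omega(i-1,i,k)$, so in all cases its top is supported on the vertices in the interval $\{i,i+1,\dots,k\}$ (with a single exceptional vertex pattern at the ends), and its socle is supported on a subinterval shifted by one; the key structural feature is that each vertex $x$ with $i\le x\le k$ appears in $M(i,k)$, with multiplicity governed by whether $x$ is in the "upper arm" $\operatorname{upp}(M)\cong\Omega(i,k,k)$, the "lower arm" $\operatorname{low}(M)\cong\Omega(k,i,k)$ (using the parity conventions), or both.

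For the projective case, since $P(x)$ has Loewy diagram with top $L(x)$, radical $L(x-1)\oplus L(x+1)$ (for $2\le x\le n-1$) and socle $L(x)$, a homomorphism $P(x)\to M(i,k)$ is determined by the image of the top generator, so $\dim\operatorname{Hom}(P(x),M(i,k))$ equals the number of submodule-generators of $M(i,k)$ isomorphic to a quotient of $P(x)$ — concretely, the number of occurrences of $L(x)$ in $M(i,k)$ that sit at the top of a subquotient isomorphic to a quotient of $P(x)$. Reading off the diagrams in the definition of $\Omega(i,j,k)$: the vertex $x$ appears twice in $M(i,k)$ when $i\le x\le k-1$ (once in the upper arm as a "top" composition factor and once in the lower arm as a "socle" composition factor, or the appropriate mirror), giving the two independent maps (one hitting the quotient $\Delta(x)$-like piece in the upper arm, one factoring through the socle); it appears once when $x=k$ (only the source vertex $k$, which is in the socle) or when $x=i-1$ (only as the socle vertex $i-1$ of the upper arm $\Delta(i)$-piece, which receives a map from $P(i-1)$ via its radical); and it does not appear for $x<i-1$ or $x>k$. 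This case analysis, tracking the at-most-two overlaps allowed by the string combinatorics, yields the first displayed formula; the endpoint cases $i=1$, $k=n$, $k=i$ must be checked separately against the special modules $\Delta(1)=L(1)$, $P(1)=I(1)$, $P(n)=\Delta(n)$ listed in Section~2.

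For the standard case, $\Delta(x)$ for $x\ge2$ has Loewy diagram $L(x)$ on top of $L(x-1)$ (and $\Delta(1)=L(1)$), so a nonzero map $\Delta(x)\to M(i,k)$ is determined by sending the generator to an occurrence of $L(x)$ in $M(i,k)$ that lies above a copy of $L(x-1)$ in the module structure; because $M(i,k)$ is a string module, each composition factor $L(x)$ with $i\le x\le k$ has exactly one such "descent" to $L(x-1)$ available (the arm containing that $x$ goes down to $x-1$ on one side), and the two copies of $L(x)$ when $i\le x\le k-1$ both factor through a single one-dimensional image $\Delta(x)$, so the Hom-space is one-dimensional for every $x$ with $i-1\le x\le k$ — including $x=i-1$ (where $\Delta(i-1)\to M(i,k)$ hits the socle copy of $L(i-1)$ only if... here one uses that the relevant submodule of $M(i,k)$ generated at $i-1$ is simple, but $\Delta(i-1)$ still maps onto it by killing its radical, contributing $1$) and $x=k$ (the socle copy of $L(k)$, with the required $L(k-1)$ sitting above it in the lower arm). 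I expect the main obstacle to be the careful bookkeeping at the boundary indices $x=i-1$ and $x=k$ and in the degenerate cases $i=1$ or $k=i$, where the generic "upper arm/lower arm" picture collapses; there one must argue, e.g., that a map from $\Delta(i-1)$ does not vanish by verifying that $L(i-1)$ genuinely occurs as a composition factor of $M(i,k)$ (it is the socle of the $\Delta(i)$-subquotient in the upper arm), and that no extra maps appear from the second arm because the second copy of $L(i-1)$ simply is not present. Once these finitely many boundary verifications are done, the two formulas follow by inspection of the diagrams.
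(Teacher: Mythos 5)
Your overall strategy --- reading the dimensions off the Loewy diagrams of $M(i,k)$ --- is the same as the paper's, and the projective half is essentially fine: once you invoke the standard identification $\operatorname{Hom}_{\Lambda_n}(P(x),M)\cong e_xM$, the first formula is just the count of composition factors $L(x)$ in $M(i,k)$, and none of the more delicate ``graph map'' combinatorics is needed. (One detail: $L(k)$ is not in the socle of $M(i,k)$ when the lower arm is nontrivial; it sits in the middle Loewy layer, with $L(k-1)$ \emph{below} it, not above. This does not affect the multiplicity count.)

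The standard-module half, however, rests on an incorrect criterion, and this is where the actual content of the lemma lies. Since $\Delta(x)$ is the quotient of $P(x)$ by the submodule generated by $\alpha_xe_x$, one has $\operatorname{Hom}_{\Lambda_n}(\Delta(x),M)\cong\{m\in e_xM\mid \alpha_xm=0\}$; the relevant condition on a copy of $L(x)$ is therefore that the arrow $\alpha_x$ \emph{towards} $x+1$ annihilates it, not that it ``lies above a copy of $L(x-1)$'' (the latter is the condition for $\alpha'_{x-1}$ to act nontrivially, i.e.\ for the map to be injective). For $i\le x\le k-1$ the two copies of $L(x)$ in $M(i,k)$ are: one in the top, on which $\alpha_x$ acts nontrivially (it maps down to the copy of $L(x+1)$ in the same arm), and one in the socle. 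Your criterion selects the top copy, but sending the generator of $\Delta(x)$ there is not a module homomorphism precisely because $\alpha_x$ does not kill it; the unique homomorphism instead sends the generator to the socle copy and annihilates $\operatorname{rad}\Delta(x)$ --- which is the map the paper exhibits. The crux of the lemma, namely why a multiplicity-two composition factor yields only a one-dimensional Hom-space, is exactly the observation that the top copy of $L(x)$ generates a submodule of dimension at least three, which cannot be a quotient of the two-dimensional $\Delta(x)$; your write-up never makes this point, and the phrase ``the two copies \dots both factor through a single one-dimensional image $\Delta(x)$'' does not parse ($\Delta(x)$ is two-dimensional for $x\ge2$). The boundary cases you single out as the main obstacle ($x=i-1$, $x=k$, $i=1$, $k=i$) are in fact the easy ones, since there $L(x)$ has multiplicity one.
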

	\begin{proof}
		Since $\dim\operatorname{Hom}_{\Lambda_n}(P(x),M(i,k))$ is equal to the number of composition factors $L(x)$ in $M(i,k)$, the first result follows immediately from the definition of $M(i,k)$. 
		If $x<i-1$, or $x>k$, then $\Delta(x)$ and $M(i,k)$ do not have any common composition factors, and there cannot exist any non-zero homomorphisms from $\Delta(x)$ to $M(i,k)$. 
		For $x=k$, we note that $\Delta(k)$ is a submodule of $M(i,k)$, and, since there is only one composition factor $L(k)$ in $M(i,k)$, it is clear that the inclusion of $\Delta(k)$ into $M(i,k)$ is the only homomorphism.
		
		Now assume that $i-1\leq x\leq k-1$. We claim that, for each such $x$, there is exactly one homomorphism from $\Delta(x)$ to $M(i,k)$ (up to a scalar), namely the homomorphism that sends the top of $\Delta(x)$ to the (unique) submodule $L(x)\subset M(i,k)$ and annihilates the radical of $\Delta(x)$. If $x\neq 1$, or equivalently, if $\Delta(x)$ is not simple, the only other possibility would be an inclusion of $\Delta(x)$ into $M(i,k)$, but for such $x$, the module $\Delta(x)$ is not a submodule of $M(i,k)$.
	\end{proof}
	
	The following lemma can be proven using the main theorem in \cite{CB89}, but for pedagogical reasons we give a more elementary proof.

	\begin{lemma}\label{lemma: dim of hom-spaces in F(Delta)}
		The space $\operatorname{Hom}_{\Lambda_n}(M(i,k),M(j,\ell))$, where $i+2\leq k$, has the following dimension, depending on $i,j,k,\ell$:
		\begin{enumerate}[$\bullet$]
			\item
			$\dim\operatorname{Hom}_{\Lambda_n}(M(i,k),M(j,\ell))=0$, if $i>\ell$ or $j>k$.
			\item $\dim \operatorname{Hom}_{\Lambda_n}(M(i,k),M(j,\ell))=1$, if $j=k$ (and $i\leq \ell$).
			\item $\dim \operatorname{Hom}_{\Lambda_n}(M(i,k),M(j,\ell))=\min(k,\ell)-\max(i,j-1)$, if $i\leq \ell$, $j<k$ and, in addition, one of the following hold: 
			\begin{enumerate}[$-$]
				\item
				$j\geq i+2$;
				\item
				$j=i+1$ and $\ell=k$;
				\item
				$j=i+1, \ell<k$ and $i\equiv \ell\mod 2$;
				\item
				$j=i+1, \ell>k$ and $i\not\equiv \ell\mod 2$.
			\end{enumerate}
			\item $\dim \operatorname{Hom}_{\Lambda_n}(M(i,k),M(j,\ell))=\min(k,\ell)-\max(i,j-1)+1$, if $i\leq \ell$, $j<k$ and, in addition, one of the following hold: 
			\begin{enumerate}[$-$]
				\item
				$j=i+1, \ell<k$ and $i\not\equiv \ell\mod 2$;
				\item
				$j=i+1, \ell>k$ and $i\equiv \ell\mod 2$;
				\item
				$j\leq i$.
			\end{enumerate}
		\end{enumerate}
	\end{lemma}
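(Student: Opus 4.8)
The statement computes $\dim\operatorname{Hom}_{\Lambda_n}(M(i,k),M(j,\ell))$ for $M(i,k)$ having a "full" shape (i.e.\ $k\geq i+2$, so both arms of $M(i,k)$ are nonempty). The natural approach is to combine two standard facts: first, every $M(i,k)$ and every $M(j,\ell)$ lies in $\mathcal{F}(\Delta)$ by construction (via Lemma~\ref{lemma:which Omega(i,j,k) are in F(Delta)}), and second, $M(i,k)$ has an explicit short exact presentation in terms of standard modules coming from Lemma~\ref{lemma:projective covers and kernels}, namely $K\hookrightarrow P\tto M(i,k)$ with $P=\bigoplus_{x=i}^{k-1}P(x)$ and $K=\bigoplus_{x=i+1}^{k-1}\Delta(x)$. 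Applying $\operatorname{Hom}_{\Lambda_n}(\blank,M(j,\ell))$ to this sequence and using that $\operatorname{Ext}^1_{\Lambda_n}(P(x),M(j,\ell))=0$, we obtain the exact sequence
\[
0\to \operatorname{Hom}_{\Lambda_n}(M(i,k),M(j,\ell))\to \operatorname{Hom}_{\Lambda_n}(P_X,M(j,\ell))\to \operatorname{Hom}_{\Lambda_n}(\Delta_Y,M(j,\ell))\to \operatorname{Ext}^1_{\Lambda_n}(M(i,k),M(j,\ell))\to 0,
\]
where $X=\{i,\dots,k-1\}$ and $Y=\{i+1,\dots,k-1\}$. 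By Lemma~\ref{lemma: dim of hom-spaces} the two middle dimensions are completely explicit, so
\[
\dim\operatorname{Hom}_{\Lambda_n}(M(i,k),M(j,\ell)) = \dim\operatorname{Hom}_{\Lambda_n}(P_X,M(j,\ell)) - \dim\operatorname{Hom}_{\Lambda_n}(\Delta_Y,M(j,\ell)) + \dim\operatorname{Ext}^1_{\Lambda_n}(M(i,k),M(j,\ell)).
\]

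**Carrying it out.** First I would dispose of the easy ranges. If $i>\ell$ or $j>k$, then $M(i,k)$ and $M(j,\ell)$ share no composition factors in the relevant positions, forcing $\operatorname{Hom}=0$; this handles the first bullet. If $j=k$, then $M(j,\ell)=M(k,\ell)$ has $L(k)$ only in its lower arm, and one checks directly from the Loewy pictures that the unique (up to scalar) map is the one killing everything above the socle copy of $L(k-1)$—giving dimension $1$—provided $i\leq\ell$ so that the image actually embeds; this is the second bullet. For the main cases, I would compute $\dim\operatorname{Hom}_{\Lambda_n}(P_X,M(j,\ell))=\sum_{x\in X}\dim\operatorname{Hom}_{\Lambda_n}(P(x),M(j,\ell))$ and $\dim\operatorname{Hom}_{\Lambda_n}(\Delta_Y,M(j,\ell))=\sum_{x\in Y}\dim\operatorname{Hom}_{\Lambda_n}(\Delta(x),M(j,\ell))$ using the two formulas in Lemma~\ref{lemma: dim of hom-spaces}; the difference telescopes, since $P$-dimensions are $2$ in the interior and $1$ at the endpoints of $[j-1,\ell]$ while $\Delta$-dimensions are uniformly $1$ on $[j-1,\ell]$, leaving essentially $\min(k,\ell)-\max(i,j-1)$ plus a correction from the endpoints $x=i$ and $x=k-1$, which are in $X$ but not in $Y$. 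The remaining input is therefore $\dim\operatorname{Ext}^1_{\Lambda_n}(M(i,k),M(j,\ell))$, which by self-orthogonality of the source (Proposition~\ref{proposition:characterization of self-orthogonal modules}) is frequently $0$, but not always—precisely the parity conditions listed in the statement should turn out to govern whether this $\operatorname{Ext}^1$ contributes an extra $+1$.

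**The main obstacle.** The hard part is the bookkeeping for $\operatorname{Ext}^1_{\Lambda_n}(M(i,k),M(j,\ell))$ and the endpoint corrections, i.e.\ distinguishing the third bullet (answer $\min(k,\ell)-\max(i,j-1)$) from the fourth (answer one larger). When $j\geq i+2$ the arms of the two modules are "nested" and a direct comparison of the string diagrams shows that a basis of $\operatorname{Hom}$ is given by the obvious sub/quotient maps indexed by overlapping positions, with no extension correction. The delicate regime is $j=i+1$, where the upper arm of $M(j,\ell)$ starts exactly where that of $M(i,k)$ continues; here whether an additional homomorphism exists—equivalently, whether the connecting map $\operatorname{Hom}(P_X,M(j,\ell))\to\operatorname{Hom}(\Delta_Y,M(j,\ell))$ fails to be surjective—depends on how the top composition factor $L(k-1)$ of $M(i,k)$ (the endpoint of $X$) interacts with the arms of $M(j,\ell)$, which is exactly controlled by the congruences $i\equiv\ell$ or $i\not\equiv\ell\bmod 2$ together with the sign of $\ell-k$. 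I would handle this by writing out the Loewy diagrams of $M(i,k)$ and $M(j,\ell)$ explicitly in each of the four subcases of bullets three and four, exhibiting an explicit basis of homomorphisms, and confirming the count against the dimension formula derived above; the case $j\leq i$ (where $M(i,k)$ surjects onto an initial segment that embeds into $M(j,\ell)$) is similar but always produces the extra homomorphism, accounting for the $+1$ in the last sub-bullet of bullet four.
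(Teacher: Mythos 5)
Your proposal has a genuine circularity at its core. You propose to extract $\dim\operatorname{Hom}_{\Lambda_n}(M(i,k),M(j,\ell))$ from the exact sequence
\[
0\to \operatorname{Hom}(M(i,k),N)\to \operatorname{Hom}(P_X,N)\to \operatorname{Hom}(\Delta_Y,N)\to \operatorname{Ext}^1(M(i,k),N)\to 0,
\]
but this only determines $\dim\operatorname{Hom}$ if $\dim\operatorname{Ext}^1(M(i,k),M(j,\ell))$ is known independently, and you have no source for that number. Proposition~\ref{proposition:characterization of self-orthogonal modules} only controls $\operatorname{Ext}^\bullet(M,M)$ for a single module and says nothing about $\operatorname{Ext}^1$ between two \emph{distinct} modules $M(i,k)$ and $M(j,\ell)$; in the paper the logic runs in the opposite direction, with the present lemma feeding into Equation~\eqref{equation: dim Ext^1} to \emph{compute} that $\operatorname{Ext}^1$ (Propositions~\ref{proposition:edge in Q_n corresponds to non-zero ext} and~\ref{proposition:no edge in Q_n corresponds to zero ext}). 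Your claim that the case $j\geq i+2$ carries ``no extension correction'' is moreover arithmetically inconsistent: summing the formulas of Lemma~\ref{lemma: dim of hom-spaces} over $X=\{i,\dots,k-1\}$ and $Y=\{i+1,\dots,k-1\}$ gives Euler characteristic $\min(k,\ell)-j$ when $j\geq i+2$, which is one \emph{less} than the asserted value $\min(k,\ell)-(j-1)$, so a nonzero $\operatorname{Ext}^1$ contribution is forced there, contrary to what you state.

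What the exact sequence genuinely gives is only the lower bound $\dim\operatorname{Hom}\geq\dim\operatorname{Hom}(P_X,N)-\dim\operatorname{Hom}(\Delta_Y,N)$; the substance of the lemma is the matching upper bound, i.e.\ showing that beyond the ``obvious'' maps (each top composition factor $L(x)$ of $M(i,k)$ sent to the socle copy of $L(x)$ in $M(j,\ell)$) there is at most one further homomorphism, and pinning down exactly when it exists. The paper does this by a propagation argument along the string: if some $L(x)$ in the top of $M(i,k)$ were mapped to the non-socle copy of $L(x)$ in $M(j,\ell)$, then the requirement that the image be a submodule and the kernel a submodule forces successive composition factors along the arms into the image, eventually demanding a composition factor (such as $L(j-2)$, $L(\ell+1)$ or $L(k+1)$) that the target or source does not possess --- except precisely in the parity/inequality configurations of the fourth bullet, where the chain terminates consistently and yields the extra map. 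Your plan ends with ``write out the Loewy diagrams and confirm the count against the dimension formula,'' but since that formula contains the unknown $\operatorname{Ext}^1$ it confirms nothing; you would need to supply this exclusion argument explicitly, and as written the proposal does not contain it.
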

	\begin{proof}
		First we note that we have the following formulas for the top of $M(i,k)$ and the socle of $M(j,\ell)$:
		\begin{align*}
			\operatorname{top}(M(i,k))&=\medoplus_{x=i}^{k-1}L(x), \\
			\operatorname{soc}(M(j,\ell))&=
			\left\{\begin{array}{c l}
				\medoplus\limits_{x=j-1}^{\ell-1} L(x), & \text{if } j>1; \\
				\medoplus\limits_{x=1}^{\ell-1}L(x), & \text{if } j=1.
			\end{array}\right.
		\end{align*}
		Observe that, if $i>\ell$ or $j>k$, then no direct summand of $\operatorname{top}(M(i,k))$ is a composition factor of $M(j,\ell)$, so $\dim\operatorname{Hom}_{\Lambda_n}(M(i,k),M(j,\ell))=0$. If $j=k$ (and thus $i\leq \ell$), then $\dim \operatorname{Hom}_{\Lambda_n}(M(i,k),M(j,\ell))=1$, and a basis vector in $\operatorname{Hom}_{\Lambda_n}(M(i,k),M(j,\ell))$ is given by the homomorphism defined by mapping $L(k-1)= L(j-1)\subset \operatorname{top}(M(i,k))$ to the unique submodule $L(j-1)\subset \operatorname{soc}(M(j,\ell))$. 
		
		Assume throughout the rest of the proof that $i\leq \ell$ and $j< k$. Then, each composition factor $L(x)$, belonging to the top of $M(i,k)$, can be mapped to the composition factor $L(x)$, contained in the socle of $M(j,\ell)$, for each $x$ such that $\max(i,j-1)\leq x \leq \min(k-1,\ell-1)$. This means that $$\dim\operatorname{Hom}_{\Lambda_n}(M(i,k),M(j,\ell))\geq \min(k,\ell)-\max(i,j-1).$$
		We will now investigate whether or not there is an additional homomorphism  $f$ that does not belong to the subspace spanned by the homomorphisms previously mentioned. 
		To get such a homomorphism $f$, some composition factor $L(x)$, contained in the top of $M(i,k)$, must be mapped to the (unique) composition factor $L(x)$ not contained in the socle of $M(j,\ell)$. This means that either $L(x)$ is contained in the top of $M(j,\ell)$, or that $x=\ell$.

		Assume that $f$ is such a homomorphism. Since the image of $f$ is a submodule, the composition factors $L(x-1)$ and $L(x+1)$, contained in the radical of $M(j,\ell)$, must also belong to the image of $f$, assuming that they are composition factors of $M(j,\ell)$. This means that the composition factors $L(x-1)$ and $L(x+1)$, contained in the radical of $M(i,k)$, do not belong to the kernel of $f$. But the kernel is a submodule, so any composition factors having arrows to $L(x-1)$ or $L(x+1)$ in the Loewy diagram of $M(i,k)$ also do not belong to the kernel of $f$. Repeating these two arguments we will either reach a contradiction, meaning that there are no more homomorphisms, or come to the conclusion that there is (up to scalar) exactly one more homomorphism.
		
		We will investigate whether or not there exists $x$ such that a homomorphism $f$ could map $L(x)$, contained in the top of $M(i,k)$, to the (unique) composition $L(x)$ not contained in the socle of $M(j,\ell)$. This will depend on the parameters $i,j,k,\ell, x$.
		
		\begin{enumerate}[$\bullet$]
			\item Assume that $j\geq i+2$.
			Repeating the arguments above we find that one of the composition factors $L(j-1)$ or $L(j-2)$, depending on the parity of $x$ and $j$, both contained in the top of $M(i,k)$, are not contained in the kernel of $f$. This is a contradiction. Indeed, 
			$L(j-2)$ is not a composition factor of $M(j,\ell)$, so $L(j-2)$ must be contained in the kernel of $f$. 
			The other case corresponds to $x$ and $j$ having different parity. In this case the (unique) composition factor $L(j-1)$ of $M(j,\ell)$ does not belong to the same arm as the composition factor $L(x)$ in question. This implies that even if $L(j-1)$ would be contained in the image of $f$, the map would not commute with the action of $\Lambda_{n}$, and is therefore not a homomorphism.
			Therefore, in this case, we have $\dim \operatorname{Hom}_{\Lambda_n}(M(i,k),M(j,\ell))=\min(k,\ell)-\max(i,j-1)$.
			
			\item Assume that $j=i+1$.						
			First we consider $x$ such that $x\equiv i \mod 2$. Then the composition factor $L(x)$ that belongs to the top of $M(i,k)$ is contained in the same arm as the (unique) composition factor $L(i-1)$ of $M(i,k)$. Similarly, the composition factor $L(x)$ that does not belong to the socle of $M(j,\ell)$ is contained in the same arm as the composition factor $L(i+1)$ that belongs to the socle of $M(j,\ell)$. If $f$ would map $L(x)$ contained in the top of $M(i,k)$ to the (unique) composition factor $L(x)$ not contained in the socle of $M(j,\ell)$, using previous arguments leads to a similar contradiction as in the case when $j\geq i+2$. This implies that $f$ must map the composition factor $L(x)$, for $x\equiv i \mod 2$, that belongs to the top of $M(i,k)$, to the socle of $M(j,\ell)$. 
			
			Next we consider $x$ such that $x\not\equiv i \mod 2$. Then the composition factor $L(x)$ that belongs to the top of $M(i,k)$ is contained in the same arm as the composition factor $L(i)$ that belongs to the socle of $M(i,k)$.
			Similarly, if $x\neq \ell$, the composition factor $L(x)$ that does not belong to the socle of $M(j,\ell)$ is contained in the same arm as the composition factor $L(i)$ that belongs to the socle of $M(j,\ell)$. If $x=\ell$, then it follows that $L(i)\subset \operatorname{soc}(M(j,\ell))$ belongs to the lower arm of $M(j,\ell)$.
			We have five cases.
			\begin{enumerate}[$-$]
				\item  Suppose $\ell < k$ and $i\equiv \ell \mod 2$. This implies that $L(x)\subset\operatorname{upp}(M(j,\ell))$ and $x\neq \ell$. If $f$ would map $L(x)$ contained in the top of $M(i,k)$ to the (unique) composition factor $L(x)$ not contained in the socle of $M(j,\ell)$, then, in the same way as before, we see that $L(\ell+1)\subset M(i,k)$ is not contained in the kernel of $f$, which is a contradiction since $L(\ell+1)$ is not a composition factor of $M(j,\ell)$. Therefore, in this case, we have $\dim \operatorname{Hom}_{\Lambda_n}(M(i,k),M(j,\ell))=\min(k,\ell)-\max(i,j-1)$.
				
				\item Suppose $\ell < k$ and $i\not\equiv \ell \mod 2$. Then  $\operatorname{low}(M(j,\ell))$ occurs as a quotient of $M(i,k)$, giving us a homomorphism. 
				Using the same argument as above we see that this is the only possibility for $f$. Therefore, in this case, we have $\dim \operatorname{Hom}_{\Lambda_n}(M(i,k),M(j,\ell))=\min(k,\ell)-\max(i,j-1)+1$.
				
				\item Suppose $\ell >k$ and $i\equiv \ell \mod 2$. Then, $\operatorname{upp}(M(i,k))$ is isomorphic to a submodule of $M(j,\ell)$, giving us a homomorphism. Using the same argument as above we see that this is the only possibility for $f$. Therefore, in this case, we have $\dim \operatorname{Hom}_{\Lambda_n}(M(i,k),M(j,\ell))=\min(k,\ell)-\max(i,j-1)+1$.
				
				\item Suppose $\ell >k$ and $i\not\equiv \ell \mod 2$. This implies that $L(x)\subset \operatorname{low}(M(i,k))$. Then, we find that $L(k+1)\subset M(j,\ell)$ is contained in the image of $f$. This is a contradiction since $L(k+1)$ is not a composition factor of $M(i,k)$. Therefore, in this case, we have $\dim \operatorname{Hom}_{\Lambda_n}(M(i,k),M(j,\ell))=\min(k,\ell)-\max(i,j-1)$.
				\item Suppose $\ell=k$. If $L(x) \subset \operatorname{upp}(M(i,k))$, then also $L(x)\subset \operatorname{upp}(M(j,\ell))$. Using the same arguments as before leads to a similar contradiction as in the case $j\geq i+2$.  		
				If, instead, $L(x)\subset \operatorname{low}(M(i,k))$ then also $L(x)\subset \operatorname{low}(M(j,\ell))$ and the situation is similar to the previous case. Therefore, in this case, we have $\dim \operatorname{Hom}_{\Lambda_n}(M(i,k),M(j,\ell))=\min(k,\ell)-\max(i,j-1)$.
			\end{enumerate}
			\item Assume that $j\leq i$.  We have three cases.
			\begin{enumerate}[$-$]
				\item Suppose $\ell < k$. 
				Depending on the parity of $\ell-i$, either $\Omega(\ell, i-1,\ell)$ or $\Omega(\ell, i, \ell)$ is a quotient of $M(i,k)$ and this quotient is isomorphic to the lower arm of $M(j,\ell)$. This defines a homomorphism $f$ from $M(i,k)$ to $M(j,\ell)$.
				Using the same arguments as above, we see that this is the only possibility for $f$. Therefore, in this case, we have $\dim \operatorname{Hom}_{\Lambda_n}(M(i,k),M(j,\ell))=\min(k,\ell)-\max(i,j-1)+1$.
				
				\item Suppose $\ell > k$. In this case, we see that the upper arm of $M(i,k)$ is isomorphic to a submodule of $M(j,\ell)$, and $f$ can be chosen as (a scalar multiple of) the obvious embedding. Using the same argument as above, we see that this is the only possibility for $f$. Therefore, in this case, we have $\dim \operatorname{Hom}_{\Lambda_n}(M(i,k),M(j,\ell))=\min(k,\ell)-\max(i,j-1)+1$.
				
				\item Suppose $\ell=k$. In this case, we see that $M(i,k)\subset M(j,\ell)$ is a submodule, and consequently, $f$ can be chosen as (a scalar multiple of) the obvious embedding. Using the same argument as above, we see that this is the only possibility for $f$. Therefore, in this case, we have $\dim \operatorname{Hom}_{\Lambda_n}(M(i,k),M(j,\ell))=\min(k,\ell)-\max(i,j-1)+1$.\qedhere
			\end{enumerate}
		\end{enumerate} 
	\end{proof}

	\begin{lemma}\label{lemma:Ext from Delta to simple}
		If $y\leq x$, then $\operatorname{Ext}_{\Lambda_n}^m(\Delta(x),L(y))=0$, for all $m>0$. If $y>x$, then $\operatorname{Ext}_{\Lambda_n}^m(\Delta(x),L(y))\neq 0$, if and only if $m=y-x$. 
	\end{lemma}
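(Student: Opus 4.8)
\emph{Proof proposal.} The plan is to read off both statements from an explicit projective resolution of $\Delta(x)$. First I would establish that the minimal projective resolution of $\Delta(x)$ is
\[
0 \longrightarrow P(n) \longrightarrow P(n-1) \longrightarrow \cdots \longrightarrow P(x+1) \longrightarrow P(x) \longrightarrow \Delta(x) \longrightarrow 0,
\]
that is, $Q_m \cong P(x+m)$ for $0 \le m \le n-x$ and $Q_m = 0$ for $m > n-x$. This follows by downward induction on $x$: the module $\Delta(n) = P(n)$ is projective, so the claim holds for $x = n$; and for $x < n$, Lemma~\ref{lemma: short exact sequences} identifies the kernel of the projective cover $P(x) \tto \Delta(x)$ with $\Delta(x+1)$, whose resolution is known by the inductive hypothesis, and splicing yields the claim for $x$. (This is essentially the same recursion that produces Lemma~\ref{lemma:projective resolution of simple modules}, with the case $x=1$ covered since $\Delta(1)=L(1)$.)

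Next I would apply $\operatorname{Hom}_{\Lambda_n}(\blank,L(y))$ to this resolution. Since $\dim\operatorname{Hom}_{\Lambda_n}(P(j),L(y))$ equals the composition multiplicity of $L(j)$ in $L(y)$, it is $1$ when $j=y$ and $0$ otherwise; hence the complex $\operatorname{Hom}_{\Lambda_n}(Q_\bullet,L(y))$ has at most one nonzero entry, namely a copy of $\Bbbk$ in cohomological degree $m = y-x$, present exactly when $0 \le y-x \le n-x$, i.e.\ when $x \le y \le n$. As $L(y)$ with $y \le n$ is the only relevant case, every differential of this complex vanishes, so $\operatorname{Ext}_{\Lambda_n}^m(\Delta(x),L(y)) \cong \Bbbk$ if $x \le y$ and $m = y-x$, and $\operatorname{Ext}_{\Lambda_n}^m(\Delta(x),L(y)) = 0$ otherwise.

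Reading off the two cases gives the statement: if $y \le x$, the only possibly nonzero extension group sits in degree $y-x \le 0$, so $\operatorname{Ext}_{\Lambda_n}^m(\Delta(x),L(y)) = 0$ for all $m>0$; if $y > x$, then $\operatorname{Ext}_{\Lambda_n}^m(\Delta(x),L(y)) \ne 0$ precisely for $m = y-x$. The only point needing a little care is the identification of the projective resolution---in particular that it terminates at $P(n) = \Delta(n)$ and that $x=1$ obeys the same recursion---but this is immediate from Lemma~\ref{lemma: short exact sequences}, so I do not anticipate a genuine obstacle.
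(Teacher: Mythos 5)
Your proposal is correct and follows essentially the same route as the paper's proof: splicing the short exact sequences $\Delta(x+1)\hookrightarrow P(x)\tto\Delta(x)$ to see that the $m$th term of the projective resolution of $\Delta(x)$ is $P(x+m)$, and then reading off the Ext groups from $\operatorname{Hom}_{\Lambda_n}(P(x+m),L(y))$. The extra care you take with the termination at $P(n)$ and the vanishing of the differentials is a welcome but minor elaboration of the same argument.
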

	\begin{proof}
		Splicing the short exact sequences $$\Delta(x+1)\hookrightarrow P(x) \tto \Delta(x),$$ it follows that, at position $m$ in the projective resolution of $\Delta(x)$, we find the projective module $P(x+m)$. This means that, if $y\leq x$, then $\operatorname{Ext}_{\Lambda_n}^m(\Delta(x),L(y))=0$, for all $m>0$. At position $y-x$, we find the projective module $P(y)$, which surjects onto $L(y)$, giving rise to a non-zero extension. It is also clear that $\operatorname{Ext}_{\Lambda_n}^m(\Delta(x),L(y))=0$, for $m\neq y-x$, since there is no homomorphism from $P(m+x)$ to $L(y)$ for such $m$.
	\end{proof}
	
	With these results in hand, we can now determine between which pairs of modules in $\mathcal{D}_n$ all extensions of positive degree vanish.
	
	\begin{lemma}\label{lemma:higher ext from (i,l) to (i+1,m) vanishes}
		For all $m\geq 2$, we have $\operatorname{Ext}_{\Lambda_n}^m(M(i,k),M(i+1,\ell))=0$.
	\end{lemma}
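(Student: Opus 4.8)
The plan is to reduce the assertion, by dimension shifting, to the single statement (call it the \emph{claim}) that $\operatorname{Ext}^1_{\Lambda_n}(\Delta(y),M(i+1,\ell))=0$ for every $y\geq i+1$, and then to read that vanishing off Lemma~\ref{lemma: dim of hom-spaces}. Granting the claim, let $K$ be the kernel of the projective cover of $M(i,k)$. By Lemma~\ref{lemma:projective covers and kernels} (in the form recalled just before equations \eqref{equation: dim Ext^1}--\eqref{equation: dim Ext^m m>=3}), the module $K$ is a direct sum of standard modules: it is $\bigoplus_{x=i+1}^{k-1}\Delta(x)$ when $k>i$ (so $K=0$ and $M(i,k)=P(i)$ is projective when $k=i+1$), and it is $\Delta(i+1)$ when $k=i$; in every case each standard summand $\Delta(x)$ has index $x\geq i+1$. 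By Lemma~\ref{lemma:dimension shift argument}(ii), for $m\geq 2$ we have $\operatorname{Ext}^m_{\Lambda_n}(M(i,k),M(i+1,\ell))\cong\operatorname{Ext}^{m-1}_{\Lambda_n}(K,M(i+1,\ell))$, which decomposes along the summands of $K$; and since the kernel of the projective cover $P(x)\tto\Delta(x)$ is $\Delta(x+1)$ (Lemma~\ref{lemma: short exact sequences}, with $\Delta(n)=P(n)$ so no term survives beyond $x=n$), iterating Lemma~\ref{lemma:dimension shift argument}(ii) gives $\operatorname{Ext}^{m-1}_{\Lambda_n}(\Delta(x),M(i+1,\ell))\cong\operatorname{Ext}^1_{\Lambda_n}(\Delta(x+m-2),M(i+1,\ell))$ with $x+m-2\geq i+1$. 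Hence the claim yields the lemma. (Equivalently, one may feed this reduction into the ready-made formulas: the right-hand side of \eqref{equation: dim Ext^2} is term-by-term $\sum_{x=i+1}^{k-1}\dim\operatorname{Ext}^1_{\Lambda_n}(\Delta(x),M(i+1,\ell))$, while for $m\geq 3$ equation \eqref{equation: dim Ext^m m>=3} reduces the assertion to $\operatorname{Ext}^{m-2}_{\Lambda_n}(\Delta(x),L(i-1)\oplus L(i))=0$, which is immediate from Lemma~\ref{lemma:Ext from Delta to simple} since $i-1,i\leq x$.)

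To prove the claim, apply Lemma~\ref{lemma:dimension shift argument}(i) to the short exact sequence $\Delta(y+1)\hookrightarrow P(y)\tto\Delta(y)$: then $\dim\operatorname{Ext}^1_{\Lambda_n}(\Delta(y),M(i+1,\ell))$ equals $\dim\operatorname{Hom}_{\Lambda_n}(\Delta(y),M(i+1,\ell))-\dim\operatorname{Hom}_{\Lambda_n}(P(y),M(i+1,\ell))+\dim\operatorname{Hom}_{\Lambda_n}(\Delta(y+1),M(i+1,\ell))$. Now substitute the values from Lemma~\ref{lemma: dim of hom-spaces}, read with the module $M(i+1,\ell)$ in the role of $M(i,k)$. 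For $y\geq i+1$, a short case analysis according to whether $i+1\leq y\leq\ell-1$, $y=\ell$, or $y\geq\ell+1$ produces for this triple of Hom-dimensions the values $(1,2,1)$, $(1,1,0)$, or $(0,0,0)$ respectively, so the alternating sum is $0$ in each case; this is exactly the claim.

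The place to watch — and, I expect, the only real subtlety — is the hypothesis $y\geq i+1$: running the same computation with $y=i$ gives $1-1+1=1$, so $\operatorname{Ext}^1_{\Lambda_n}(\Delta(i),M(i+1,\ell))\neq 0$, and the argument genuinely relies on the fact that the kernel of the projective cover of $M(i,k)$ never contains a copy of $\Delta(i)$. This is also why the relation $(i,k)\prec_0(i+1,\ell)$, which by Theorem~\ref{theorem:order isomorphism} records a non-zero extension, is a purely degree-one phenomenon rather than one visible in higher degrees.
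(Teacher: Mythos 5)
Your proof is correct and follows essentially the same route as the paper: the case $m=2$ is the paper's Equation~\eqref{equation: dim Ext^2} evaluated term by term (your claim that $\operatorname{Ext}^1_{\Lambda_n}(\Delta(y),M(i+1,\ell))=0$ for all $y\geq i+1$ is exactly the vanishing of each summand there), and for $m\geq 3$ your parenthetical alternative is precisely the paper's argument via Equation~\eqref{equation: dim Ext^m m>=3} and Lemma~\ref{lemma:Ext from Delta to simple}. The only difference is organizational: you unify all degrees through a single $\operatorname{Ext}^1$ statement and write out the case analysis of Hom-dimensions that the paper's proof leaves implicit.
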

	\begin{proof}
		By Equation~\eqref{equation: dim Ext^2}, we have
		\begin{gather*}
			\dim \operatorname{Ext}_{\Lambda_n}^2(M(i,k),M(i+1,\ell))=\\
			=\dim\operatorname{Hom}_{\Lambda_n}(\Delta_X,M(i+1,\ell)) -\dim\operatorname{Hom}_{\Lambda_n}(P_X,M(i+1,\ell))  +\dim\operatorname{Hom}_{\Lambda_n}(\Delta_Y,M(i+1,\ell)),
		\end{gather*}
		where $X, Y\subset \{1,\dots, n\}$ are subsets depending on $i$ and $k$. Lemma~\ref{lemma: dim of hom-spaces} then allows us to compute the dimensions 
		\begin{gather*}
			\dim \operatorname{Hom}_{\Lambda_n}(\Delta_X, M(i+1,\ell)),\quad \dim \operatorname{Hom}_{\Lambda_n}(P_X,M(i+1,\ell)) \quad
			\text{and}\quad \dim \operatorname{Hom}_{\Lambda_n}(\Delta_Y, M(i+1,\ell)),
		\end{gather*}
		which, of course, depend on $i, k$ and $\ell$. Summing these numbers, with signs prescribed by Equation \eqref{equation: dim Ext^2}, in each case, then yields $\dim \operatorname{Ext}_{\Lambda_n}^2(M(i,k),M(i+1,\ell))=0$. This proves the case $m=2$.
		
		For $m\geq 3$, we use Equation~\eqref{equation: dim Ext^m m>=3}, which says that $$\dim\operatorname{Ext}_{\Lambda_n}^m(M(i,k),M(i+1,\ell))=\dim\operatorname{Ext}_{\Lambda_n}^{m-2}(\Delta_X,L(i-1)\oplus L(i)),$$
		where $X=\{i,i+1,\dots,k-1\}$, if $k>i$, and $X=\{i\}$, if $k=i$. Since $x\geq i$, for all $x\in X$, Lemma~\ref{lemma:Ext from Delta to simple} guarantees that $\dim\operatorname{Ext}_{\Lambda_n}^m(M(i,k),M(i+1,\ell))=\dim\operatorname{Ext}_{\Lambda_n}^{m-2}(\Delta_X,L(i-1)\oplus L(i))=0$, for all $m\geq 3.$\qedhere
	\end{proof}

	\begin{proposition} \label{proposition: no ext from (i,k) to (j,l) if j<=i}
		If $j\leq i$, then $\operatorname{Ext}_{\Lambda_{n}}^m(M(i,k),M(j,\ell))=0$, for all $m\geq 1$.
	\end{proposition}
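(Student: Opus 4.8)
The plan is to handle the cohomological degrees $m\ge 3$, $m=2$ and $m=1$ separately, after first disposing of the degenerate case $k=i+1$. If $k=i+1$, then by the description of projective covers and kernels given at the beginning of this subsection (which rests on Lemma~\ref{lemma:projective covers and kernels}) the kernel of the projective cover of $M(i,i+1)$ is $\bigoplus_{x=i+1}^{i}\Delta(x)=0$; hence $M(i,i+1)$ is projective and $\operatorname{Ext}_{\Lambda_n}^m(M(i,i+1),M(j,\ell))=0$ for all $m\ge 1$. So we may assume $k=i$ or $k\ge i+2$.

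For $m\ge 3$, I would apply Equation~\eqref{equation: dim Ext^m m>=3}, which writes $\operatorname{Ext}_{\Lambda_n}^m(M(i,k),M(j,\ell))$ as a direct sum of the spaces $\operatorname{Ext}_{\Lambda_n}^{m-2}(\Delta(x),L(j-2)\oplus L(j-1))$ with $x$ running over $X$. Every element of $X$ is at least $i$, and by hypothesis $i\ge j>j-1>j-2$, so each such $x$ satisfies $j-1\le x$ and $j-2\le x$; Lemma~\ref{lemma:Ext from Delta to simple} then forces $\operatorname{Ext}_{\Lambda_n}^{m-2}(\Delta(x),L(j-1))=\operatorname{Ext}_{\Lambda_n}^{m-2}(\Delta(x),L(j-2))=0$ (with $L(j-2)$ interpreted as zero if $j\le 2$). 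Summing over $x\in X$ settles all degrees $m\ge 3$.

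For $m\in\{1,2\}$ the argument is a direct evaluation of the alternating sums of homomorphism-space dimensions occurring in Equations~\eqref{equation: dim Ext^1} and~\eqref{equation: dim Ext^2}. The quantities $\dim\operatorname{Hom}_{\Lambda_n}(P_X,M(j,\ell))$, $\dim\operatorname{Hom}_{\Lambda_n}(\Delta_X,M(j,\ell))$ and $\dim\operatorname{Hom}_{\Lambda_n}(\Delta_Y,M(j,\ell))$ are read off from Lemma~\ref{lemma: dim of hom-spaces}; here the hypothesis $j\le i$ is exactly what ensures that every index appearing in $X$ or $Y$ exceeds $j-1$, so those piecewise formulas take their simplest form. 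For $m=1$ with $k\ge i+2$ one additionally needs $\dim\operatorname{Hom}_{\Lambda_n}(M(i,k),M(j,\ell))$, which, since $j\le i$, is given by the last bullet of Lemma~\ref{lemma: dim of hom-spaces in F(Delta)} as $\min(k,\ell)-i+1$ when $i\le\ell$ and as $0$ when $i>\ell$; for $k=i$ one instead uses $M(i,i)=\Delta(i)$, so that Lemma~\ref{lemma: dim of hom-spaces} again suffices. Substituting these values into~\eqref{equation: dim Ext^1} and~\eqref{equation: dim Ext^2} and splitting into the cases $\ell\ge k$, $\ell=k-1$, $i+1\le\ell\le k-2$ and $\ell\le i$ (with the exact boundaries depending on $m$, and with $k=i$ being the simpler degenerate version), the alternating sums collapse to $0$, giving $\operatorname{Ext}_{\Lambda_n}^1(M(i,k),M(j,\ell))=\operatorname{Ext}_{\Lambda_n}^2(M(i,k),M(j,\ell))=0$.

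The step I expect to be the main obstacle is the $m=1$ computation for $k\ge i+2$: one has to keep precise track of the additive constant $+1$ contributed by $\dim\operatorname{Hom}_{\Lambda_n}(M(i,k),M(j,\ell))$, of the $+1$ that appears in $\dim\operatorname{Hom}_{\Lambda_n}(P_X,M(j,\ell))$ precisely when $\ell$ lies in the index interval $X$, and of the fact that $X$ and $Y$ differ by one element, and then verify that these contributions cancel on each boundary value of $\ell$. Everything else reduces to routine arithmetic once the case division on $\ell$ is in place.
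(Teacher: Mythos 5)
Your proposal is correct and follows essentially the same route as the paper: the paper's proof of this proposition is exactly an appeal to Equations~\eqref{equation: dim Ext^1}, \eqref{equation: dim Ext^2} and \eqref{equation: dim Ext^m m>=3} together with Lemmas~\ref{lemma: dim of hom-spaces}, \ref{lemma: dim of hom-spaces in F(Delta)} and \ref{lemma:Ext from Delta to simple}, which is what you carry out (and the alternating sums do collapse to zero in every case of your subdivision on $\ell$). Your separate treatment of $k=i+1$ via the projectivity of $M(i,i+1)$ is a sensible refinement within the same strategy, since Lemma~\ref{lemma: dim of hom-spaces in F(Delta)} is only stated for $i+2\leq k$.
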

	\begin{proof}
		This can be proved using the same strategy as for Lemma~\ref{lemma:higher ext from (i,l) to (i+1,m) vanishes}, by using Lemma~\ref{lemma: dim of hom-spaces},~\ref{lemma: dim of hom-spaces in F(Delta)}~and~\ref{lemma:Ext from Delta to simple} as well as Equation \eqref{equation: dim Ext^1}, \eqref{equation: dim Ext^2} and \eqref{equation: dim Ext^m m>=3}.\qedhere
	\end{proof}

	\begin{proposition}\label{proposition:edge in Q_n corresponds to non-zero ext}
		Let $(i,k)$ and $(i+1,\ell)$ be such that $(i,k)\prec (i+1,\ell)$. Then, $$\operatorname{Ext}_{\Lambda_{n}}^1(M(i,k),M(i+1,\ell))\neq 0.$$ 
	\end{proposition}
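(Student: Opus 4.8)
Since every defining relation of $\prec_0$ connects an element of degree $i$ to one of degree $i+1$, and $\prec$ is the transitive closure of $\prec_0$, any chain in $(Q_n,\prec)$ from a degree-$i$ element to a degree-$(i+1)$ element has length one; hence the hypothesis $(i,k)\prec(i+1,\ell)$ is equivalent to $(i,k)\prec_0(i+1,\ell)$, that is, to the pair $(k,\ell)$ occurring explicitly in the list defining $\prec_0$. The plan is to run through that list, which splits into the four families
\begin{enumerate}[(a)]
	\item $k=i$ and $i+1\le\ell\le n$;
	\item $k\equiv i \mod 2$, $k>i$ and $i+1\le\ell\le k-1$ with $\ell\not\equiv i \mod 2$;
	\item $k\equiv i \mod 2$, $k>i$ and $k<\ell\le n$;
	\item $k\not\equiv i \mod 2$, $k>i$ and $i+1\le\ell\le k-2$ (so $\ell<k$ and $\ell\equiv k \mod 2$);
\end{enumerate}
and to verify $\operatorname{Ext}_{\Lambda_n}^1(M(i,k),M(i+1,\ell))\neq 0$ in each. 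Observe that the value $\ell=k$ occurs in none of the four families, and that $k=i+1$ never occurs as a source (family (d) is then vacuous).

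The workhorse is Equation~\eqref{equation: dim Ext^1}, which expresses $\dim\operatorname{Ext}_{\Lambda_n}^1(M(i,k),M(i+1,\ell))$ as $\dim\operatorname{Hom}_{\Lambda_n}(M(i,k),M(i+1,\ell))-\dim\operatorname{Hom}_{\Lambda_n}(P_X,M(i+1,\ell))+\dim\operatorname{Hom}_{\Lambda_n}(\Delta_Y,M(i+1,\ell))$, with $X=\{i,\dots,k-1\}$ and $Y=\{i+1,\dots,k-1\}$ when $k>i$, and $X=\{i\}$, $Y=\{i+1\}$ when $k=i$. The two terms involving $P_X$ and $\Delta_Y$ are read off directly from Lemma~\ref{lemma: dim of hom-spaces}. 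The term $\dim\operatorname{Hom}_{\Lambda_n}(M(i,k),M(i+1,\ell))$ is computed, when $k\ge i+2$, from Lemma~\ref{lemma: dim of hom-spaces in F(Delta)} applied with target parameter $j=i+1$; when $k=i$ one has $M(i,k)=\Delta(i)$ and this dimension equals $1$ directly (from the second formula of Lemma~\ref{lemma: dim of hom-spaces}, applied to $M(i+1,\ell)$, at $x=i$, with a one-line adjustment for $i=1$ using $L(1)\subseteq\operatorname{soc}M(2,\ell)$). Forming the alternating sum in each family then gives the value $1$, which in particular is non-zero. Family (a) is uniform and short; in its instance $\ell=i+1$ the claim is already contained in Proposition~\ref{proposition:ext between standards and costandards} via the non-split sequence $\Delta(i+1)\hookrightarrow P(i)\tto\Delta(i)$ of Lemma~\ref{lemma: short exact sequences}.

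The step I expect to cause the most trouble is the bookkeeping in families (b) and (c). There one must first determine which clause of Lemma~\ref{lemma: dim of hom-spaces in F(Delta)} governs $\dim\operatorname{Hom}_{\Lambda_n}(M(i,k),M(i+1,\ell))$ — this depends on the sign of $\ell-k$ and on whether $\ell\equiv i\mod 2$ — and then verify that the $\min$/$\max$ expressions and the $\pm1$ correction in that clause combine with the (likewise parity-sensitive) contributions of $\operatorname{Hom}_{\Lambda_n}(P_X,-)$ and $\operatorname{Hom}_{\Lambda_n}(\Delta_Y,-)$ so as to yield $1$ rather than $0$; the delicate boundary cases are $\ell=k-1$ in family (b) and $\ell=k+1$ in family (c). It is cleanest to record the outcome as a small table, one row per family, listing the three dimensions and their alternating sum. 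Nothing beyond Equation~\eqref{equation: dim Ext^1} and Lemmas~\ref{lemma: dim of hom-spaces} and~\ref{lemma: dim of hom-spaces in F(Delta)} is needed; since only $\operatorname{Ext}^1$ is at stake, no dimension shift into higher degrees — and in particular no costandard cofiltration of $M(i+1,\ell)$ — enters the argument.
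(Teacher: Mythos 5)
Your plan coincides with the paper's own proof, which is a one-line deferral to ``the same strategy as for Lemma~\ref{lemma:higher ext from (i,l) to (i+1,m) vanishes}'', i.e.\ exactly the combination of Equation~\eqref{equation: dim Ext^1} with Lemmas~\ref{lemma: dim of hom-spaces} and~\ref{lemma: dim of hom-spaces in F(Delta)} and a case analysis over the defining families of $\prec_0$; your observation that $(i,k)\prec(i+1,\ell)$ forces $(i,k)\prec_0(i+1,\ell)$ is also correct, as is your treatment of family (a).

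There is, however, one place where the plan as written would fail if you take Lemma~\ref{lemma: dim of hom-spaces in F(Delta)} at face value, and it sits precisely in the family (c) you flagged as delicate. For $k\equiv i\bmod 2$, $k>i$, $\ell>k$ and $\ell\not\equiv i\bmod 2$, that lemma's clause ``$j=i+1$, $\ell>k$, $i\not\equiv\ell\bmod 2$'' returns $\dim\operatorname{Hom}_{\Lambda_n}(M(i,k),M(i+1,\ell))=\min(k,\ell)-\max(i,i)=k-i$, while Lemma~\ref{lemma: dim of hom-spaces} gives $\dim\operatorname{Hom}_{\Lambda_n}(P_X,M(i+1,\ell))=2(k-i)-1$ and $\dim\operatorname{Hom}_{\Lambda_n}(\Delta_Y,M(i+1,\ell))=k-i-1$; the alternating sum of Equation~\eqref{equation: dim Ext^1} is then $(k-i)-(2(k-i)-1)+(k-i-1)=0$, not $1$. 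The resolution is that the Hom-dimension in this clause is actually $k-i+1$: besides the $k-i$ maps sending top composition factors of $M(i,k)$ to socle composition factors of $M(i+1,\ell)$, there is an extra homomorphism factoring as $M(i,k)\tto\operatorname{upp}(M(i,k))\hookrightarrow M(i+1,\ell)$, and this embedding of the upper arm exists for every $\ell>k$ irrespective of the parity of $\ell$ (contrary to the case division in the proof of Lemma~\ref{lemma: dim of hom-spaces in F(Delta)}). A concrete instance is $\dim\operatorname{Hom}_{\Lambda_n}(\Omega(1,1,3),\Omega(2,1,4))=3$, not $2$, which yields $\dim\operatorname{Ext}^1_{\Lambda_n}(M(1,3),M(2,4))=3-3+1=1$ as the proposition requires. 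So the statement you are proving is true and your architecture is sound, but in family (c) you cannot simply cite the lemma verbatim; you must recompute (or correct) this one Hom-dimension before forming the alternating sum.
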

	\begin{proof}
		This can be proved using the same strategy as for Lemma~\ref{lemma:higher ext from (i,l) to (i+1,m) vanishes}, by using Lemma~\ref{lemma: dim of hom-spaces}~and~\ref{lemma: dim of hom-spaces in F(Delta)} as well as Equation \eqref{equation: dim Ext^1}.
	\end{proof}

	\begin{proposition}\label{proposition:no edge in Q_n corresponds to zero ext}
		Let $(i,k)$ and $(i+1,\ell)$ be such that $(i,k)\not\prec (i+1,\ell)$. Then, $$\operatorname{Ext}_{\Lambda_{n}}^m(M(i,k),M(i+1,\ell))=\operatorname{Ext}_{\Lambda_{n}}^m(M(i+1,\ell),M(i,k))=0,$$ for all $m\geq 1$.
	\end{proposition}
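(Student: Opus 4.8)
The assertion splits into three parts, of which two are already at hand. The ``downward'' extensions vanish unconditionally: applying Proposition~\ref{proposition: no ext from (i,k) to (j,l) if j<=i} to the pair of modules $M(i+1,\ell)$ and $M(i,k)$ --- whose target index $i$ does not exceed the source index $i+1$ --- gives $\operatorname{Ext}^m_{\Lambda_n}(M(i+1,\ell),M(i,k))=0$ for all $m\geq 1$, regardless of whether $(i,k)\prec(i+1,\ell)$. On the other side, Lemma~\ref{lemma:higher ext from (i,l) to (i+1,m) vanishes} already yields $\operatorname{Ext}^m_{\Lambda_n}(M(i,k),M(i+1,\ell))=0$ for all $m\geq 2$. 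Thus the whole content of the proposition reduces to showing that $\operatorname{Ext}^1_{\Lambda_n}(M(i,k),M(i+1,\ell))=0$ under the assumption $(i,k)\not\prec(i+1,\ell)$.

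The first step is to unpack that assumption. Since every generating relation of $\prec_0$ increases the degree by exactly one, a $\prec$-chain joining an element of degree $i$ to one of degree $i+1$ must consist of a single step; hence $(i,k)\prec(i+1,\ell)$ if and only if $(i,k)\prec_0(i+1,\ell)$, and $(i,k)\not\prec(i+1,\ell)$ translates, via the explicit list defining $\prec_0$, into a short list of configurations for $(k,\ell)$ depending on $i$ and on the parities of $k-i$ and $\ell-i$. Two degenerate cases are immediate. If $k=i$, then $(i,i)\prec_0(i+1,\ell)$ holds for every admissible $\ell$, so the hypothesis never holds and there is nothing to prove. If $k=i+1$, then $M(i,i+1)=P(i)$ --- this identification is a direct comparison of Loewy diagrams --- which is projective, so $\operatorname{Ext}^1_{\Lambda_n}(M(i,i+1),N)=0$ for every $\Lambda_n$-module $N$. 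It therefore suffices to treat the case $k\geq i+2$.

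For $k\geq i+2$ the plan is to evaluate Equation~\eqref{equation: dim Ext^1}, so that $\dim\operatorname{Ext}^1_{\Lambda_n}(M(i,k),M(i+1,\ell))$ equals $\dim\operatorname{Hom}_{\Lambda_n}(M(i,k),M(i+1,\ell))-\dim\operatorname{Hom}_{\Lambda_n}(P_X,M(i+1,\ell))+\dim\operatorname{Hom}_{\Lambda_n}(\Delta_Y,M(i+1,\ell))$ with $X=\{i,\dots,k-1\}$ and $Y=\{i+1,\dots,k-1\}$ --- exactly the strategy used in Lemma~\ref{lemma:higher ext from (i,l) to (i+1,m) vanishes} and in Propositions~\ref{proposition: no ext from (i,k) to (j,l) if j<=i} and~\ref{proposition:edge in Q_n corresponds to non-zero ext}. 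The terms $\dim\operatorname{Hom}_{\Lambda_n}(P_X,M(i+1,\ell))$ and $\dim\operatorname{Hom}_{\Lambda_n}(\Delta_Y,M(i+1,\ell))$ are read off directly from Lemma~\ref{lemma: dim of hom-spaces} (they depend only on the position of $\ell$ relative to $i$ and $k$), while $\dim\operatorname{Hom}_{\Lambda_n}(M(i,k),M(i+1,\ell))$ comes from Lemma~\ref{lemma: dim of hom-spaces in F(Delta)} (the $j=i+1$ sub-cases of its third and fourth bullets). One then performs the arithmetic in each of the finitely many configurations singled out by the position of $\ell$ relative to $k$ (the cases $\ell<k$, $\ell=k$, $\ell>k$) and by the parity of $\ell-i$; in every non-edge configuration the alternating sum comes out to $0$, while in the complementary (edge) configurations it comes out to $1$, in agreement with Proposition~\ref{proposition:edge in Q_n corresponds to non-zero ext}.

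The one genuinely delicate point --- and the main obstacle --- is combinatorial rather than homological: one must check that the list of non-edge configurations extracted from the definition of $\prec_0$ coincides exactly with the list of configurations in which Lemma~\ref{lemma: dim of hom-spaces in F(Delta)} contributes no extra ``$+1$'' to $\dim\operatorname{Hom}_{\Lambda_n}(M(i,k),M(i+1,\ell))$ (or in which that extra homomorphism is precisely cancelled by the $\operatorname{Hom}_{\Lambda_n}(P_X,-)$ term). Since the formula in Lemma~\ref{lemma: dim of hom-spaces in F(Delta)} branches according to whether $\ell<k$, $\ell=k$ or $\ell>k$ and according to the parity of $i$ versus $\ell$, and since the defining relations of $\prec_0$ branch along these very same inequalities and parities, the verification is a finite but somewhat fiddly table-check. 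Extra care is needed at the boundary value $i=1$, where the definition of $M(i,k)$ changes form, and at $\ell=i+1$, where $M(i+1,\ell)=\Delta(i+1)$; and one should bear in mind throughout that Lemma~\ref{lemma: dim of hom-spaces in F(Delta)} is only available for $i+2\leq k$, which is precisely why the cases $k=i$ and $k=i+1$ had to be isolated above.
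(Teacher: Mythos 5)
Your proposal is correct and follows essentially the same route as the paper: quote Proposition~\ref{proposition: no ext from (i,k) to (j,l) if j<=i} for the reverse direction, Lemma~\ref{lemma:higher ext from (i,l) to (i+1,m) vanishes} for degrees $\geq 2$, and then compute $\dim\operatorname{Ext}^1$ case by case via Equation~\eqref{equation: dim Ext^1} together with Lemmas~\ref{lemma: dim of hom-spaces} and~\ref{lemma: dim of hom-spaces in F(Delta)}. Your explicit isolation of the cases $k=i$ (vacuous) and $k=i+1$ (where $M(i,i+1)=P(i)$ is projective and Lemma~\ref{lemma: dim of hom-spaces in F(Delta)} is unavailable) is a welcome extra precaution that the paper's proof leaves implicit.
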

	\begin{proof}
		By Lemma~\ref{lemma:higher ext from (i,l) to (i+1,m) vanishes} and  Proposition~\ref{proposition: no ext from (i,k) to (j,l) if j<=i} all extensions of degree two or higher from $M(i,k)$ to $M(i+1, \ell)$ vanish, as well as all extensions of positive degree from $M(i+1,\ell)$ to $M(i,k)$.
		That the remaining extensions vanish can be proved using the same strategy as for Lemma~\ref{lemma:higher ext from (i,l) to (i+1,m) vanishes}, by using Lemma~\ref{lemma: dim of hom-spaces}~and~\ref{lemma: dim of hom-spaces in F(Delta)} as well as Equation~\eqref{equation: dim Ext^1}.
	\end{proof}

	\begin{proposition}\label{proposition: Ext from (i,k) to (j,l) is non-zero for j>=i+2}
		If $k\neq i+1$ and $j\geq i+2$, then $\operatorname{Ext}_{\Lambda_n}^m(M(i,k),M(j,\ell))\neq 0$, for some $m\geq 1$.
	\end{proposition}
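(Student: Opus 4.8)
The plan is to single out one non-vanishing extension group, namely $\operatorname{Ext}_{\Lambda_n}^{j-i}(M(i,k),M(j,\ell))$, and to run the whole argument through the direct summand $\Delta(i+1)$ of the kernel of the projective cover of $M(i,k)$ --- this is exactly the point where the hypothesis $k\neq i+1$ enters. Since $(i,k)\in Q_n$ and $k\neq i+1$, we have $k=i$ or $k\geq i+2$, and by Lemma~\ref{lemma:projective covers and kernels} the kernel $K$ of the projective cover $P\tto M(i,k)$ always contains $\Delta(i+1)$ as a direct summand: $K=\Delta(i+1)$ when $k=i$, and $K=\bigoplus_{x=i+1}^{k-1}\Delta(x)$ when $k>i$. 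Since $j-i\geq 2$, Lemma~\ref{lemma:dimension shift argument}(ii) gives $\operatorname{Ext}_{\Lambda_n}^{j-i}(M(i,k),M(j,\ell))\cong\operatorname{Ext}_{\Lambda_n}^{j-i-1}(K,M(j,\ell))$, so it suffices to prove that $\operatorname{Ext}_{\Lambda_n}^{j-i-1}(\Delta(i+1),M(j,\ell))\neq 0$.

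For this I would dimension-shift once more, now in the first variable: applying Lemma~\ref{lemma:dimension shift argument}(ii) repeatedly to the non-split short exact sequences $\Delta(x+1)\hookrightarrow P(x)\tto\Delta(x)$ of Lemma~\ref{lemma: short exact sequences} --- each of which exhibits $\Delta(x+1)$ as the first syzygy of $\Delta(x)$, valid for every $x\leq n-1$ --- and noting that only the indices $i+1,i+2,\dots,j-1$ occur, all of which are $\leq n-1$, one obtains
\[
\operatorname{Ext}_{\Lambda_n}^{j-i-1}(\Delta(i+1),M(j,\ell))\cong\operatorname{Ext}_{\Lambda_n}^{1}(\Delta(j-1),M(j,\ell)).
\]
Finally, applying $\operatorname{Hom}_{\Lambda_n}(\blank,M(j,\ell))$ to the short exact sequence $\Delta(j)\hookrightarrow P(j-1)\tto\Delta(j-1)$ and using that $\operatorname{Ext}_{\Lambda_n}^1(P(j-1),\blank)=0$, one gets
\begin{align*}
\dim\operatorname{Ext}_{\Lambda_n}^1(\Delta(j-1),M(j,\ell))=\;&\dim\operatorname{Hom}_{\Lambda_n}(\Delta(j),M(j,\ell))-\dim\operatorname{Hom}_{\Lambda_n}(P(j-1),M(j,\ell))\\
&+\dim\operatorname{Hom}_{\Lambda_n}(\Delta(j-1),M(j,\ell)).
\end{align*}
By Lemma~\ref{lemma: dim of hom-spaces} (applied with its parameters $i,k$ specialised to $j,\ell$), each of the three terms on the right equals $1$, so the left-hand side equals $1$ and the proposition follows.

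I do not expect a genuine obstacle: the statement asks only for the existence of a single non-zero extension, and the reductions above produce one in degree $j-i$. The only points needing care are bookkeeping --- checking that every syzygy step remains in the range $x\leq n-1$ (so that $\Delta(x+1)$ really is the first syzygy of $\Delta(x)$), and treating the degenerate case $k=i$, where $M(i,k)=\Delta(i)$, on the same footing as $k\geq i+2$. Alternatively, in the computational spirit of Lemma~\ref{lemma:higher ext from (i,l) to (i+1,m) vanishes} one can derive the same conclusion from Equation~\eqref{equation: dim Ext^m m>=3} together with Lemma~\ref{lemma:Ext from Delta to simple} when $j\geq i+3$, and from Equation~\eqref{equation: dim Ext^2} together with Lemma~\ref{lemma: dim of hom-spaces} when $j=i+2$; in that variant the case $j=i+2$ is the only one requiring separate attention.
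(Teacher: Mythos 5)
Your argument is correct, and it is a genuine (if mild) variant of the paper's proof rather than a reproduction of it. The paper splits into two cases: for $j\geq i+3$ it shifts twice, to $\operatorname{Ext}^{m-2}(K,C)$, and detects the non-vanishing via the simple summands $L(j-2)\oplus L(j-1)$ of the cokernel $C$ together with Lemma~\ref{lemma:Ext from Delta to simple}; for $j=i+2$ it shifts once to $K$ and then invokes Proposition~\ref{proposition:edge in Q_n corresponds to non-zero ext} for the relation $(i+1,i+1)\prec_0(i+2,\ell)$. You instead shift once to the summand $\Delta(i+1)$ of $K$ (this is, as in the paper, exactly where $k\neq i+1$ is needed) and then walk down the syzygy chain $\Delta(i+1),\Delta(i+2),\dots,\Delta(j-1)$ in the first variable, reducing everything to the single computation $\dim\operatorname{Ext}^1_{\Lambda_n}(\Delta(j-1),M(j,\ell))=1-1+1=1$ via Lemma~\ref{lemma: dim of hom-spaces}. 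The payoff of your route is uniformity: both cases $j=i+2$ and $j\geq i+3$ are handled by one argument, and you do not need to cite Proposition~\ref{proposition:edge in Q_n corresponds to non-zero ext} (whose proof in the paper is itself only sketched), nor the dualized cokernel description entering Equation~\eqref{equation: dim Ext^m m>=3}. Your bookkeeping checks out: all syzygy indices stay in range since $j\leq n$, the edge cases $\ell=j$ and $j=n$ cause no trouble, and both ends produce the same explicit non-zero extension in degree $j-i$.
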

	\begin{proof}
		For  $j\geq i+3$ the result follows from Equation~\eqref{equation: dim Ext^m m>=3} together with Lemma~\ref{lemma:Ext from Delta to simple}.
		For $j=i+2$, we will use the isomorphism $$\operatorname{Ext}_{\Lambda_n}^2(M(i,k),M(j,\ell))\cong \operatorname{Ext}_{\Lambda_n}^1(K,M(j,\ell)),$$ where $K$ is the kernel of the projective cover of $M(i,k)$. Note that $K$ always contains $\Delta(i+1)$ as a direct summand, and that $\Delta(i+1)=M(i+1,i+1)$. Since $(i+1,i+1)\prec (i+2,\ell)$, for all $\ell$, Proposition~\ref{proposition:edge in Q_n corresponds to non-zero ext} implies that $\operatorname{Ext}_{\Lambda_n}^2(M(i,k),M(j,\ell))\cong \operatorname{Ext}_{\Lambda_n}^1(K,M(j,\ell))$ is non-zero.
	\end{proof}
	
	\begin{proof}[Proof of Theorem \ref{theorem:order isomorphism}]
		By applying Proposition~\ref{proposition: no ext from (i,k) to (j,l) if j<=i}, Proposition~\ref{proposition:edge in Q_n corresponds to non-zero ext}, Proposition~\ref{proposition:no edge in Q_n corresponds to zero ext} and  Proposition~\ref{proposition: Ext from (i,k) to (j,l) is non-zero for j>=i+2}, it is now clear that $(i,k) \prec (j,\ell)$ if and only if there is a non-zero extension from $M(i,k)$ to  $M(j,\ell)$.
	\end{proof}

	\subsection{Characterization of generalized tilting modules}
	Using Theorem~\ref{theorem:order isomorphism} we can now characterize the generalized tilting $\Lambda_n$-modules in $\mathcal{F}(\Delta)$ in terms of anti-chains with respect to the order $\prec$.
	
	Observe that, for any $n\geq 2$, there is an epimorphism of algebras $\Lambda_n \twoheadrightarrow \Lambda_{n-1}$, given by quotienting out the two-sided ideal generated by $e_n$, the idempotent corresponding to the vertex $n$. This epimorphism induces a functor $F:\Lambda_{n-1}\operatorname{-mod}\to \Lambda_n\operatorname{-mod}$, which is well-known to be fully faithful and exact. From this, one can deduce the following.
	
	\begin{lemma}\label{lemma:natural functor is ext-full}\cite{Dlab-Ringel,KlucznikKonig}
		For any $\Lambda_{n-1}$-modules $M$ and $N$, we have
		$$\operatorname{Ext}^k_{\Lambda_{n-1}}(M,N)\cong \operatorname{Ext}_{\Lambda_n}^k(M,N),$$
		for all $k\geq 0$.
	\end{lemma}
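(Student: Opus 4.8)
The functor $F$ is exact and fully faithful, so the statement for $k=0$ is the full faithfulness of $F$ and the task is only to propagate the $\operatorname{Hom}$-isomorphism to all higher degrees. The plan is to use the fact that $e_n\Lambda_n e_n\cong\Bbbk$ (the path $\alpha_{n-1}\alpha_{n-1}'$ is killed by the relation $\alpha_i\alpha_i'=0$ with $i=n-1$, so the trivial path is the only path from $n$ to $n$), which makes $\Lambda_n e_n\Lambda_n$ a heredity ideal with $\Lambda_{n-1}=\Lambda_n/\Lambda_n e_n\Lambda_n$. Concretely, I would first prove the following acyclicity claim: for every indecomposable projective $\Lambda_{n-1}$-module $P_{\Lambda_{n-1}}(j)$ and every $\Lambda_{n-1}$-module $N$, one has $\operatorname{Ext}_{\Lambda_n}^k(F(P_{\Lambda_{n-1}}(j)),F(N))=0$ for all $k>0$.

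I would establish this claim by splitting on $j$. If $j\leq n-2$, then the Loewy diagrams recorded in Section~2 show that $P_{\Lambda_n}(j)$ has no composition factor $L(n)$; hence $(\Lambda_n e_n\Lambda_n)e_j=0$, so $F(P_{\Lambda_{n-1}}(j))=\Lambda_{n-1}e_j=\Lambda_n e_j=P_{\Lambda_n}(j)$ is projective over $\Lambda_n$ and the higher $\operatorname{Ext}$ groups vanish trivially. If $j=n-1$, then $n-1$ is maximal for the natural order on $\{1,\dots,n-1\}$, so $P_{\Lambda_{n-1}}(n-1)=\Delta_{\Lambda_{n-1}}(n-1)$ and $F(P_{\Lambda_{n-1}}(n-1))=\Delta_{\Lambda_n}(n-1)$; then Lemma~\ref{lemma: short exact sequences} with $i=n-1$, together with the identity $\Delta(n)=P(n)$, produces a short exact sequence of $\Lambda_n$-modules
$$0\longrightarrow P_{\Lambda_n}(n)\longrightarrow P_{\Lambda_n}(n-1)\longrightarrow F(P_{\Lambda_{n-1}}(n-1))\longrightarrow 0.$$
Applying $\operatorname{Hom}_{\Lambda_n}(-,F(N))$ to this sequence, the only term that could obstruct the vanishing is $\operatorname{Hom}_{\Lambda_n}(P_{\Lambda_n}(n),F(N))\cong e_n F(N)$, which is zero because $F(N)$ is inflated from a $\Lambda_{n-1}$-module and hence annihilated by $e_n$; so $\operatorname{Ext}_{\Lambda_n}^k(F(P_{\Lambda_{n-1}}(n-1)),F(N))=0$ for all $k>0$.

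Granting the claim, the lemma follows by a standard argument. For $\Lambda_{n-1}$-modules $M$ and $N$, pick a projective resolution $\cdots\to Q_1\to Q_0\to M\to 0$ in $\Lambda_{n-1}\operatorname{-mod}$. Exactness of $F$ makes $\cdots\to F(Q_1)\to F(Q_0)\to F(M)\to 0$ a resolution of $F(M)$ consisting of modules that are $\operatorname{Hom}_{\Lambda_n}(-,F(N))$-acyclic by the claim, so it computes $\operatorname{Ext}^{\bullet}_{\Lambda_n}(F(M),F(N))$. Full faithfulness of $F$ identifies the complex $\operatorname{Hom}_{\Lambda_n}(F(Q_\bullet),F(N))$ with $\operatorname{Hom}_{\Lambda_{n-1}}(Q_\bullet,N)$, and passing to cohomology gives $\operatorname{Ext}^k_{\Lambda_n}(F(M),F(N))\cong\operatorname{Ext}^k_{\Lambda_{n-1}}(M,N)$ for all $k\geq 0$.

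The only non-formal step, and hence the main obstacle, is the identification inside the acyclicity claim: recognising $F(P_{\Lambda_{n-1}}(n-1))$ as $\Delta_{\Lambda_n}(n-1)$ and its $\Lambda_n$-syzygy as the projective module $P(n)=\Delta(n)$. Once that short exact sequence is in place, everything reduces to the vanishing of $e_n F(N)$ and to formal homological algebra. Alternatively, one may simply invoke the general theory of stratifying (in particular heredity) ideals and recollements from \cite{Dlab-Ringel,KlucznikKonig}: since $\Lambda_n e_n\Lambda_n$ is the trace of $P(n)=\Delta(n)$ and $e_n\Lambda_n e_n\cong\Bbbk$ is semisimple, it is a heredity ideal, and inflation along a heredity ideal is known to induce isomorphisms on all $\operatorname{Ext}$-groups; the argument sketched above is just the specialisation of this fact to $\Lambda_n$.
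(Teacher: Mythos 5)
Your argument is correct. Note, however, that the paper does not actually prove this lemma: it simply records that the quotient map $\Lambda_n\twoheadrightarrow\Lambda_{n-1}$ kills the two-sided ideal generated by $e_n$ and cites \cite{Dlab-Ringel,KlucznikKonig} for the resulting $\operatorname{Ext}$-isomorphisms, exactly the ``alternative'' route you mention in your last sentence. What you add is a self-contained specialisation of that general fact, and it goes through: the case split on $j$ is the right one, the identification $F(P_{\Lambda_{n-1}}(n-1))=\Delta_{\Lambda_n}(n-1)$ together with the sequence $P(n)\hookrightarrow P(n-1)\tto\Delta(n-1)$ from Lemma~\ref{lemma: short exact sequences} gives a length-one projective resolution over $\Lambda_n$, and the vanishing of $e_nF(N)$ kills all the relevant $\operatorname{Hom}$ terms, so inflated projectives are $\operatorname{Hom}_{\Lambda_n}(-,F(N))$-acyclic and the usual acyclic-resolution argument finishes the proof. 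The one imprecision worth flagging is the parenthetical claim that $e_n\Lambda_ne_n\cong\Bbbk$ ``makes'' $\Lambda_ne_n\Lambda_n$ a heredity ideal: semisimplicity of $e_n\Lambda_ne_n$ is not by itself sufficient, one also needs $\Lambda_ne_n\Lambda_n$ to be projective as a left module (equivalently, that multiplication $\Lambda_ne_n\otimes_{\Bbbk}e_n\Lambda_n\to\Lambda_ne_n\Lambda_n$ is bijective). Here this does hold, since $\Lambda_ne_n\Lambda_n$ is spanned by $e_n,\alpha_{n-1},\alpha_{n-1}'$ and $\alpha_{n-1}'\alpha_{n-1}$ and is thus isomorphic to $P(n)^{\oplus 2}$, or one can simply appeal to the quasi-heredity of $\Lambda_n$ with $n$ maximal; in any case your concrete acyclicity argument never uses the heredity-ideal formalism, so the proof stands independently of that remark.
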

	
	\begin{proposition}\label{proposition:every tilting module belongs to row i and i+1 for some i}
		Assume that $T$ is a generalized tilting $\Lambda_n$-module contained in $\mathcal{F}(\Delta)$. Then there exists an index $i$, where $1\leq i\leq n-1$, such that every indecomposable direct summand of $T$ is isomorphic to either $M(i,k)$, $M(i+1,\ell)$ or $P(j)$, for some $i\leq k\leq n$, $i+1\leq \ell\leq n$ or $j<i$.
		
		In this case, we say that $T$ belongs to the $i$th tier. 
	\end{proposition}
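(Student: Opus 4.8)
The plan is to pass, via Theorem~\ref{theorem:order isomorphism}, from the module category to the combinatorics of the graded poset $(Q_n,\prec)$, and then to lean on the ``long-jump'' relations recorded just before that theorem: namely, that if $k\neq i+1$ then $(i,k)\prec(j,\ell)$ for every $j\geq i+2$ and every $\ell\geq j$. This single fact does essentially all the work.

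First I would set up the reduction. A generalized tilting module has exactly $n$ pairwise non-isomorphic indecomposable direct summands; since $T\in\mathcal{F}(\Delta)$ and $\mathcal{F}(\Delta)$ is closed under direct summands, each summand lies in $\mathcal{D}_n$, and by (T2) the summands are self-orthogonal and pairwise $\prec_E$-incomparable. Hence, under the order isomorphism $\varphi^{-1}$ of Theorem~\ref{theorem:order isomorphism}, the indecomposable summands of $T$ correspond to an anti-chain $A\subseteq Q_n$ with $\lvert A\rvert=n$. Call a pair $(a,k)\in Q_n$ \emph{special} if $k=a+1$; the special pairs are exactly $(1,2),(2,3),\dots,(n-1,n)$, which correspond to $P(1),\dots,P(n-1)$, so there are only $n-1$ of them. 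As $\lvert A\rvert=n$, the anti-chain $A$ must contain a non-special pair, so the set $S:=\{a:(a,k)\in A\text{ for some }k\neq a+1\}$ is non-empty.

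The heart of the argument is that $S$ is confined to two consecutive tiers. Suppose $a,b\in S$ with $b\geq a+2$. I would pick a non-special pair $(a,m)\in A$ and any pair $(b,\ell)\in A$ (which exists since $b\in S$); the long-jump relation then gives $(a,m)\prec(b,\ell)$, contradicting that $A$ is an anti-chain. Thus $\max S-\min S\leq 1$. I now set $i:=\min S$ if $\min S\leq n-1$, and $i:=n-1$ if $\min S=n$; in either case $1\leq i\leq n-1$ and $S\subseteq\{i,i+1\}$.

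It remains to identify the tiers occupied by $A$. When $\min S\leq n-1$ we have $i\in S$, so there is a non-special pair $(i,m)\in A$; if some $(a,k)\in A$ had $a\geq i+2$, then (again by the long-jump relation, using $k\geq a$) $(i,m)\prec(a,k)$, a contradiction. So every pair of $A$ lies in a tier $\leq i+1$: a pair in tier $i$ is $M(i,k)$, a pair in tier $i+1$ is $M(i+1,\ell)$, and a pair $(a,k)\in A$ with $a\leq i-1$ cannot be non-special (as $a\notin S$), hence $k=a+1$ and it equals $P(a)$ with $a<i$ — which is exactly the assertion. The leftover case $\min S=n$ I would dispatch by hand: then $S=\{n\}$, tier $n$ is $\{(n,n)\}$, so $(n,n)\in A$ while every other pair of $A$ is special and lies in a lower tier, forcing $A=\{(n,n)\}\cup\{(a,a+1):1\leq a\leq n-1\}$; with $i=n-1$ one has $(n,n)=M(i+1,n)$, $(n-1,n)=P(n-1)=M(i,n)$, and the remaining pairs are $P(a)$ with $a<i$. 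The one point that genuinely needs care is precisely this boundary bookkeeping — ensuring the index $i$ can always be chosen with $1\leq i\leq n-1$, in particular when $S$ is concentrated in the top tier — since the long-jump argument itself is immediate once the correct $i$ has been isolated.
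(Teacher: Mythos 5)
Your argument is correct and is essentially the paper's own: both hinge on the fact that a non-projective summand $M(i,k)$ (equivalently, a non-special pair $(i,k)$ with $k\neq i+1$) admits a non-zero higher extension to every $M(j,\ell)$ with $j\geq i+2$, which you access via the long-jump relations and Theorem~\ref{theorem:order isomorphism} and the paper accesses via Proposition~\ref{proposition: Ext from (i,k) to (j,l) is non-zero for j>=i+2}. Your treatment is in fact slightly more careful at the boundary, since the counting argument $\lvert A\rvert=n>n-1$ and the explicit case $\min S=n$ cover the all-projective tilting module $T=\Lambda_n$, for which the paper's choice of ``least index with a non-projective summand'' is undefined.
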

	
	\begin{proof}
		Let $i$ be the least index such that $T$ has a non-projective summand $M(i,k)$. Consider the module $M(j,\ell)$ for some $j\geq i+2$. Then, there exists a non-zero extension from $M(i,k)$ to $M(j,\ell)$, by Proposition~\ref{proposition: Ext from (i,k) to (j,l) is non-zero for j>=i+2}. By assumption, if $j<i$, then any non-projective module $M(j,\ell)$ is not a summand of $T$. This leaves as possible summands of $T$ the modules  $M(i,k), M(i+1,\ell)$ or $P(j)$, for $j<i$. 
	\end{proof}
	
	\begin{lemma}\label{lemma:P(j) for j<i is a direct summand of T}
		Assume that $T\in\mathcal{F}(\Delta)$ is a generalized tilting $\Lambda_n$-module in the $i$th tier. Then $\bigoplus_{j=1}^{i-1}P(j)$ is a direct summand of $T$.
	\end{lemma}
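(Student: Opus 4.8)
The plan is to show that each individual projective $P(r)$, for $1\le r\le i-1$, is a direct summand of $T$, through a Bongartz-type maximality argument: I will verify that $T\oplus P(r)$ still satisfies conditions (T1) and (T2), and then use that a generalized tilting module cannot be enlarged by an extra indecomposable summand.

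First I would identify these projectives inside the combinatorial model. Since $P(r)$ is projective it is self-orthogonal, and, $\Lambda_n$ being quasi-hereditary, $P(r)\in\mathcal{F}(\Delta)$; being indecomposable, $P(r)\in\mathcal{D}_n$, so $P(r)=M(a,b)$ for some $(a,b)\in Q_n$. For $r<n$ the module $P(r)$ has Loewy length $3$, hence is not a standard module, so $b>a$; then $\operatorname{top}(P(r))=L(r)$ together with the formula $\operatorname{top}(M(a,b))=\medoplus_{x=a}^{b-1}L(x)$ from the proof of Lemma~\ref{lemma: dim of hom-spaces in F(Delta)} forces $a=r$ and $b=r+1$. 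Thus $P(r)\cong M(r,r+1)$ for every $r<n$, so each $P(r)$ with $r\le i-1$ is one of the modules to which Proposition~\ref{proposition: no ext from (i,k) to (j,l) if j<=i} applies.

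The key step is to check that $\operatorname{Ext}_{\Lambda_n}^m(T,P(r))=0$ for all $m\ge1$ and all $r\le i-1$. By Proposition~\ref{proposition:every tilting module belongs to row i and i+1 for some i}, every indecomposable summand $N$ of $T$ is isomorphic to $M(i,k)$, $M(i+1,\ell)$ or $P(j)$ with $j<i$. If $N$ is projective the vanishing is immediate. If $N\cong M(i,k)$ or $N\cong M(i+1,\ell)$, then since $r<i$ we have $r\le i$ and $r\le i+1$, so Proposition~\ref{proposition: no ext from (i,k) to (j,l) if j<=i}, applied with the target module $P(r)=M(r,r+1)$, gives $\operatorname{Ext}_{\Lambda_n}^m(N,P(r))=0$ for all $m\ge1$. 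Hence $\operatorname{Ext}_{\Lambda_n}^m(T,P(r))=0$ for all $m\ge1$. On the other hand $\operatorname{Ext}_{\Lambda_n}^m(P(r),T)=0$ for $m\ge1$ because $P(r)$ is projective, and both $T$ and $P(r)$ are self-orthogonal; since $\Lambda_n$ has finite global dimension, $T\oplus P(r)$ satisfies (T1) and (T2).

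Finally I would conclude by maximality. By Theorem~\ref{theorem:RS}, since $\Lambda_n$ has finite representation type, there is a module $S$ such that $T\oplus P(r)\oplus S$ is a generalized tilting module, and such a module has exactly $n$ pairwise non-isomorphic indecomposable direct summands; as $T$ itself, being generalized tilting, already has $n$ such summands — all of which occur in $T\oplus P(r)\oplus S$ — it follows that $P(r)\in\operatorname{add}T$. Running this over $r=1,\dots,i-1$, and recalling that these projectives are pairwise non-isomorphic, gives that $\bigoplus_{j=1}^{i-1}P(j)$ is a direct summand of $T$. I expect the only genuinely delicate point to be the vanishing $\operatorname{Ext}_{\Lambda_n}^m(T,P(r))=0$: it relies on matching the indices correctly when invoking Proposition~\ref{proposition: no ext from (i,k) to (j,l) if j<=i} — the ``small'' first index there is that of the target $P(r)=M(r,r+1)$, and the inequality one needs, $r\le i$, is exactly what $r<i$ supplies — together with the identification $P(r)\cong M(r,r+1)$.
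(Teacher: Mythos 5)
Your proposal is correct and follows essentially the same route as the paper: both arguments show that $\operatorname{Ext}^m_{\Lambda_n}(T,P(r))=0$ via Proposition~\ref{proposition: no ext from (i,k) to (j,l) if j<=i} (together with projectivity of $P(r)$ for the other direction) and then conclude from the fact that a generalized tilting module has exactly $n$ non-isomorphic indecomposable summands. The only differences are cosmetic — the paper argues by contradiction where you complete via Theorem~\ref{theorem:RS}, and you make explicit the identification $P(r)\cong M(r,r+1)$ that the paper uses implicitly.
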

	\begin{proof}
		Assume towards a contradiction that $P(j)$ is not a direct summand of $T$, for some $j<i$. Let $M$ be an indecomposable direct summand of $T$. Then $M$ is projective or isomorphic to either $M(i,k)$ or $M(i+1,\ell)$. This means that $\operatorname{Ext}^m_{\Lambda_n}(P(j),M)=0$ trivially and $\operatorname{Ext}_{\Lambda_n}(M,P(j))=0$ for any $j<i$, either because $M$ is projective or by Proposition \ref{proposition: no ext from (i,k) to (j,l) if j<=i}. This implies that the module $T\oplus P(j)$ is a generalized tilting module with $n+1$ non-isomorphic direct summands, which is a contradiction, since any such module must have exactly $n$ non-isomorphic indecomposable direct summands.
	\end{proof}
	\begin{example}
		Let $n=5$. To find generalized tilting modules in the first tier, we look for anti-chains consisting of vertices in the first and second rows. 
		
		$$\xymatrix{
			(1,1) \ar[rd] \ar[rrd] \ar[rrrd] \ar[rrrrd] & (1,2) & (1,3) \ar[ld] \ar[rd] \ar[rrd] & (1,4) \ar[lld] & (1,5) \ar[llld] \ar[ld]\\
			& (2,2) & (2,3) & (2,4) & (2,5)	
		}$$
		We find the following anti-chains.
		\begin{gather*}
			\{(1,1), (1,2), (1,3), (1,4), (1,5)\},\quad \{(1,2), (2,2), (2,3), (2,4), (2,5)\}, \\ \{(1,2), (1, 4), (1,5),
			(2,3), (2,5)\} \quad
			\text{and}\quad \{(1,2), (1,4), (2,3), (2,4), (2,5)\}.		
		\end{gather*}
		
	\end{example}
	\begin{lemma}\label{lemma:M(i,n) or M(i+1,n) is a direct summand of T}
		Assume that $T\in\mathcal{F}(\Delta)$ is a generalized tilting $\Lambda_n$-module in the $i$th tier. Then, at least one of the modules $M(i,n)$ or $M(i+1,n)$ occurs as a direct summand of $T$.
	\end{lemma}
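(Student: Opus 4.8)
The plan is to use property (T3) of a generalized tilting module together with the identity $P(n)=\Delta(n)$. By (T3) there is an exact sequence $0\to\Lambda_n\to Q_0\to\dots\to Q_r\to 0$ with every $Q_t\in\operatorname{add}T$. Restricting the monomorphism $\Lambda_n\hookrightarrow Q_0$ to the direct summand $P(n)$ of $\Lambda_n$ yields a monomorphism $\Delta(n)=P(n)\hookrightarrow Q_0$, where $Q_0$ is a direct sum of indecomposable summands of $T$.

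The first step is to upgrade this to a monomorphism into a single indecomposable summand. Write $Q_0\cong\bigoplus_s N_s$ with each $N_s$ an indecomposable summand of $T$, and let $\iota_s\colon\Delta(n)\to N_s$ be the components of the monomorphism. The module $\Delta(n)$ is uniserial of Loewy length two with unique simple submodule $\operatorname{soc}(\Delta(n))=L(n-1)$. Restricting the monomorphism to the socle shows that $\iota_s$ is nonzero on $\operatorname{soc}(\Delta(n))$ for some $s$; for that $s$, the kernel of $\iota_s$ cannot contain the unique simple submodule $L(n-1)$, so $\iota_s$ is injective. Hence $\Delta(n)$ embeds into some indecomposable summand $N$ of $T$.

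The second step identifies $N$. By Proposition~\ref{proposition:every tilting module belongs to row i and i+1 for some i}, $N$ is isomorphic to $M(i,k)$ for some $k$, to $M(i+1,\ell)$ for some $\ell$, or to $P(j)$ for some $j<i$. The last option is impossible, since for $j<i\leq n-1$ the projective $P(j)$ has no composition factor $L(n)=\operatorname{top}(\Delta(n))$ (its composition factors lie among $L(j-1),L(j),L(j+1)$ and $j+1<n$), whence $\operatorname{Hom}_{\Lambda_n}(\Delta(n),P(j))=0$. So $N\cong M(i,k)$ or $N\cong M(i+1,\ell)$. By Lemma~\ref{lemma: dim of hom-spaces}, $\operatorname{Hom}_{\Lambda_n}(\Delta(n),M(a,b))=0$ whenever $b<n$, so necessarily $k=n$ or $\ell=n$; moreover a nonzero homomorphism $\Delta(n)\to M(a,n)$ is injective, because its image is a nonzero quotient of $\Delta(n)$ and the only other candidate, $L(n)$, is not a submodule of $M(a,n)$ (the socle formula in the proof of Lemma~\ref{lemma: dim of hom-spaces in F(Delta)} shows $L(n)$ does not occur in $\operatorname{soc}(M(a,n))$). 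Therefore $N\cong M(i,n)$ or $N\cong M(i+1,n)$, which proves the lemma.

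The part requiring the most care is the bookkeeping in the two injectivity claims: that the monomorphism $\Delta(n)\hookrightarrow Q_0$ descends to a monomorphism into an indecomposable summand, and that the essentially unique homomorphism into $M(a,n)$ does not factor through the simple top $L(n)$. Both are controlled by the single observation that $\Delta(n)$ has $L(n-1)$ as its only simple submodule while $L(n)$ never occurs in the socle of the modules in play; the remainder is a direct appeal to Lemma~\ref{lemma: dim of hom-spaces}, Lemma~\ref{lemma: dim of hom-spaces in F(Delta)} and the tier classification.
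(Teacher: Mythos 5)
Your proof is correct, but it takes a genuinely different route from the paper. The paper argues by contradiction: if neither $M(i,n)$ nor $M(i+1,n)$ is a summand, then no indecomposable summand of $T$ (by the tier classification of Proposition~\ref{proposition:every tilting module belongs to row i and i+1 for some i}) has $L(n)$ as a composition factor, so $T$ is the image of a $\Lambda_{n-1}$-module under the natural embedding functor; by Lemma~\ref{lemma:natural functor is ext-full} it is then a self-orthogonal $\Lambda_{n-1}$-module of finite projective dimension with $n$ non-isomorphic indecomposable summands, which is impossible over an algebra with $n-1$ simples. You instead argue directly from axiom (T3): the coresolution $0\to\Lambda_n\to Q_0\to\cdots$ restricted to $P(n)=\Delta(n)$ embeds $\Delta(n)$ into some indecomposable summand $N$ of $T$, and the Hom computations (Lemma~\ref{lemma: dim of hom-spaces}) together with the tier classification force $N\cong M(i,n)$ or $M(i+1,n)$. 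Both arguments are sound; yours avoids the ext-fullness lemma and the implicit appeal to the Rickard--Schofield bound over the smaller algebra, at the price of invoking (T3), which the paper otherwise mostly bypasses via Corollary~\ref{corollary:theorem:RS}. A small simplification of your argument: the injectivity upgrade to a single summand is not actually needed, since any \emph{nonzero} component $\Delta(n)\to N_s$ already has image a nonzero quotient of $\Delta(n)$, hence forces $L(n)$ to be a composition factor of $N_s$, and $M(i,n)$, $M(i+1,n)$ are the only admissible summands containing $L(n)$.
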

	\begin{proof}
		Assume towards a contraction that $T$ contains neither $M(i,n)$ nor $M(i+1,n)$ as a summand. The modules $M(i,n)$ and $M(i+1,n)$ are the only modules in the rows $i$ and $i+1$ which have a composition factor $L(n)$. Furthermore, no projective module $P(j)$ with $j<n-1$ has $L(n)$ as a composition factor. This means that $T$ restricts to a $\Lambda_{n-1}$-module, and by Lemma \ref{lemma:natural functor is ext-full}, we have $\operatorname{Ext}_{\Lambda_{n-1}}^k(T,T)\cong \operatorname{Ext}_{\Lambda_n}^k(T,T)=0$, for all $k\geq 0$. Then, $T$ is a generalized tilting $\Lambda_{n-1}$-module with $n$ summands, a contradiction.
	\end{proof}
	
	Let $1\leq i \leq n-1$ and $i< x \leq  n$. We let $Z(i,x)$ denote the following module:
	\begin{align*}
		Z(i,x):= M(i,x)\oplus M(i+1,x)\oplus \bigoplus_{j=1}^{i-1}P(j)\bigoplus_{k\in[i+1,x-1)}M(i,k)\bigoplus_{\ell\in[i+2,x-1)}M(i+1,\ell) 
		 .
	\end{align*}

	\begin{lemma}
		The module $Z(i,n)$ is a generalized tilting module, for $1\leq i\leq n-1$.
	\end{lemma}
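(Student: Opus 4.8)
The plan is to verify the hypotheses (T1), (T2) and (T3$'$) of Corollary~\ref{corollary:theorem:RS}; since $\Lambda_n$ has only finitely many indecomposable modules, it is of finite representation type, so the corollary then yields the claim at once. Condition (T1) holds for every $\Lambda_n$-module because $\Lambda_n$ is quasi-hereditary, hence of finite global dimension. For (T3$'$) I would simply count summands: the pairwise non-isomorphic indecomposable direct summands of $Z(i,n)$ are $P(1),\dots,P(i-1)$, the modules $M(i,k)$ for $k\in[i+1,n-1)\cup\{n\}$, and the modules $M(i+1,\ell)$ for $\ell\in[i+2,n-1)\cup\{n\}$, and since $[i+1,n-1)\cup[i+2,n-1)=\{i+1,\dots,n-1\}$ this amounts to $(i-1)+2+(n-i-1)=n$ summands. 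Thus the entire content of the lemma is the self-orthogonality condition (T2).

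To establish (T2) I would translate into the combinatorics of $(Q_n,\prec)$ via Theorem~\ref{theorem:order isomorphism}. Since $P(j)=M(j,j+1)$, the set of $\varphi$-preimages of the indecomposable summands of $Z(i,n)$ is
\[S=\{(j,j+1):1\le j\le i-1\}\cup\{(i,k):k\in[i+1,n-1)\cup\{n\}\}\cup\{(i+1,\ell):\ell\in[i+2,n-1)\cup\{n\}\}.\]
Every summand of $Z(i,n)$ lies in $\mathcal{D}_n$ and is self-orthogonal --- the $P(j)$ since they are projective, the $M(i,k)$ and $M(i+1,\ell)$ since $\varphi$ takes values in $\mathcal{D}_n$ --- and, by Theorem~\ref{theorem:order isomorphism}, for distinct $p,q\in Q_n$ one has $\operatorname{Ext}_{\Lambda_n}^m(\varphi(p),\varphi(q))=0$ for all $m\ge 1$ precisely when $p\not\prec q$. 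Hence, by additivity of $\operatorname{Ext}$ in both variables, $\operatorname{Ext}_{\Lambda_n}^m(Z(i,n),Z(i,n))=0$ for all $m\ge 1$ as soon as $S$ is an anti-chain in $(Q_n,\prec)$.

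To see that $S$ is an anti-chain I would first record two elementary facts. Since $\prec$ is the transitive closure of $\prec_0$ and every defining relation of $\prec_0$ increases the degree $\deg(i,k)=i$ by exactly one, any $\prec$-related pair has strictly increasing degree, and between two consecutive degrees $\prec$ coincides with $\prec_0$. Moreover, a pair $(j,j+1)$ has no $\prec_0$-successor: among the defining relations it matches only the family $(i,i+2k+1)\prec_0(i+1,i+1+2\ell)$ and only with parameter $k=0$, which yields no successors. The first fact removes every relation whose degree does not strictly increase --- in particular all relations within one degree, all relations from degree $i+1$ to degree $i$, and all relations from a degree $\ge i$ down to a degree $\le i-1$ --- while the second removes every relation with source of the form $(j,j+1)$, hence every relation among the summands of $S$ in degrees $<i$ and every relation from such a summand to any other. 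What remains is to rule out relations of the form $(i,k)\prec_0(i+1,\ell)$ with $k\in[i+1,n-1)\cup\{n\}$ and $\ell\in[i+2,n-1)\cup\{n\}$.

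This last step is the one genuine computation, and I would carry it out by cases on $k$: the case $k=i+1$ is already covered by the second fact above, and in each of the remaining cases $k\in[i+1,n-1)$ with $k>i+1$, $k=n$ with $n\equiv i\bmod 2$, and $k=n$ with $n\not\equiv i\bmod 2$, the defining relations of $\prec_0$ show that every $\prec_0$-successor of $(i,k)$ is a pair $(i+1,m)$ with $m\equiv i+1\bmod 2$ and $m<n$. On the other hand, each summand of $S$ in degree $i+1$ is $(i+1,\ell)$ with either $\ell\equiv i\bmod 2$ (when $\ell\in[i+2,n-1)$) or $\ell=n$; in the first case $\ell$ has the wrong parity to equal such an $m$, and in the second $\ell=n>m$. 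Hence no relation $(i,k)\prec_0(i+1,\ell)$ occurs inside $S$, so $S$ is an anti-chain, which yields (T2); together with (T1), (T3$'$) and Corollary~\ref{corollary:theorem:RS} this shows that $Z(i,n)$ is a generalized tilting module. I expect the only real obstacle to be the bookkeeping in this case analysis, which is routine given the explicit definition of $\prec_0$.
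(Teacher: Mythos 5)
Your proposal is correct and takes essentially the same route as the paper's own (very terse) proof: self-orthogonality via Theorem~\ref{theorem:order isomorphism} plus a count of $n$ non-isomorphic indecomposable summands, followed by Corollary~\ref{corollary:theorem:RS}. You simply write out the anti-chain verification and the summand count that the paper leaves implicit, and these details check out.
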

	\begin{proof}
		It follows from Theorem~\ref{theorem:order isomorphism} that $Z(i,n)$ is self-orthogonal. Since $Z(i,n)$ contains $n$ non-isomorphic indecomposable direct summands, it follows from Corollary~\ref{corollary:theorem:RS} that it is a generalized tilting module.
	\end{proof}
	
	\begin{proposition}\label{proposition:If both M(i,n) and M(i+1,n) are direct summands then T=Z(i,n)}
		Assume that $T\in\mathcal{F}(\Delta)$ is a basic generalized tilting $\Lambda_n$-module in the $i$th tier. If both of the modules $M(i,n)$ and $M(i+1,n)$ are direct summands of $T$, then $$T=Z(i,n).$$
	\end{proposition}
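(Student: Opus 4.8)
The plan is to transport the statement, through the order isomorphism $\varphi$ of Theorem~\ref{theorem:order isomorphism}, into a statement about anti-chains in $(Q_n,\prec)$, and then to finish with a short counting argument. First I would reduce to combinatorics. Since $T$ belongs to the $i$th tier, Lemma~\ref{lemma:P(j) for j<i is a direct summand of T} lets us write $T\cong P(1)\oplus\cdots\oplus P(i-1)\oplus T'$, where $T'$ collects the remaining indecomposable summands of $T$; by the definition of the tier each such summand is of the form $M(i,k)$ or $M(i+1,\ell)$, and since $T$ is basic with exactly $n$ indecomposable summands, $T'$ has exactly $n-i+1$ of them. As $T'$ is self-orthogonal, Theorem~\ref{theorem:order isomorphism} shows that $S:=\varphi^{-1}(\{\text{summands of }T'\})$ is an anti-chain of $(Q_n,\prec)$ contained in the two rows $\{(i,k):i\le k\le n\}\cup\{(i+1,\ell):i+1\le\ell\le n\}$ with $|S|=n-i+1$, and the hypothesis that $M(i,n)$ and $M(i+1,n)$ are summands of $T$ says exactly that $(i,n)\in S$ and $(i+1,n)\in S$. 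Since $\prec_0$ relates only vertices of consecutive degree, on these two rows $\prec$ coincides with $\prec_0$, so the anti-chain condition on $S$ reads: there are no $(i,k)\in S$ and $(i+1,\ell)\in S$ with $(i,k)\prec_0(i+1,\ell)$.

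Next I would read off the constraints imposed by the two vertices already known to lie in $S$. Working directly from the list of defining relations of $\prec_0$, via a short case analysis on the parities of $k$, $\ell$ and $n$, one verifies
\[
(i,n)\prec(i+1,\ell)\iff i+1\le\ell\le n-1 \ \text{and}\ \ell\not\equiv i\pmod 2,
\]
\[
(i,k)\prec(i+1,n)\iff i\le k\le n-1 \ \text{and}\ k\equiv i\pmod 2.
\]
From the first equivalence and $(i,n)\in S$, no vertex $(i+1,\ell)$ with $\ell<n$ and $\ell\not\equiv i$ can lie in $S$, so $S\cap\{(i+1,\cdot)\}\subseteq\{(i+1,n)\}\cup\{(i+1,\ell):\ell\in[i+2,n-1)\}$. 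From the second equivalence and $(i+1,n)\in S$, no vertex $(i,k)$ with $i\le k\le n-1$ and $k\equiv i$ can lie in $S$, so $S\cap\{(i,\cdot)\}\subseteq\{(i,n)\}\cup\{(i,k):k\in[i+1,n-1)\}$; matching these descriptions against the two cases $n\equiv i$ and $n\not\equiv i$ is routine.

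Finally I would count. Put $S_0:=\{(i,n),(i+1,n)\}\cup\{(i,k):k\in[i+1,n-1)\}\cup\{(i+1,\ell):\ell\in[i+2,n-1)\}$, so that $S\subseteq S_0$. The two progressions $[i+1,n-1)$ and $[i+2,n-1)$ collect exactly the elements of the two opposite parities in $\{i+1,i+2,\dots,n-1\}$, hence partition that set, whence
\[
|S_0|=2+\bigl|[i+1,n-1)\bigr|+\bigl|[i+2,n-1)\bigr|=2+(n-i-1)=n-i+1=|S|,
\]
forcing $S=S_0$. As $S_0$ is precisely $\varphi^{-1}$ of the set of summands of $Z(i,n)$ other than $P(1),\dots,P(i-1)$, and these projective summands are shared by $T$ and $Z(i,n)$ by Lemma~\ref{lemma:P(j) for j<i is a direct summand of T}, we conclude $T\cong Z(i,n)$.

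The only substantive point is the verification of the two displayed equivalences for $\prec$, together with the identification of the surviving row-$i$ and row-$(i+1)$ vertices with $\{(i,n)\}\cup\{(i,k):k\in[i+1,n-1)\}$ and $\{(i+1,n)\}\cup\{(i+1,\ell):\ell\in[i+2,n-1)\}$. I expect this parity case analysis to be the main obstacle, but it is entirely mechanical given the explicit relations defining $\prec_0$; everything else is bookkeeping and the clean counting identity above.
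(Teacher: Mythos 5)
Your proof is correct and follows essentially the same route as the paper: both arguments identify exactly which vertices of rows $i$ and $i+1$ are excluded by the presence of $(i,n)$ and $(i+1,n)$ (the paper phrases this directly via the non-zero extensions from $M(i,n)$ to $M(i+1,k)$, $k\in[i+1,n)$, and from $M(i,k)$ to $M(i+1,n)$, $k\in[i,n)$, which is your pair of parity equivalences read through Theorem~\ref{theorem:order isomorphism}), and then conclude by the same counting: the survivors number exactly $n$ and constitute $Z(i,n)$. Your version is if anything slightly more careful, since you note explicitly that $\prec$ restricted to two consecutive rows coincides with $\prec_0$ and that the two parity classes $[i+1,n-1)$ and $[i+2,n-1)$ partition $\{i+1,\dots,n-1\}$.
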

	\begin{proof}
		There is a non-zero extension from the module $M(i,n)$ to the modules $M(i+1, k)$, for $k\in [i+1, n)$. This excludes $\lfloor\frac{n-i}{2}\rfloor$ modules from occurring as summands in $T$. Similarly, there is a non-zero extension from $M(i,k)$ to $M(i+1, n)$, for $k\in [i, n)$. This excludes $\lfloor\frac{n-i+1}{2}\rfloor$ modules from occurring as summands in $T$. In total, this excludes $n-i$ modules from occurring as summands in $T$. Since we have $2n-i$ indecomposable modules in total to choose from, and have excluded $n-i$ of them, the direct sum of the $n$ remaining indecomposable modules must be our module $T$. However, this direct sum is precisely $Z(i,n)$, which proves the claim. \qedhere
	\end{proof}
	
	\begin{theorem}\label{theorem:characterization of generalized tilting modules}
		Assume that $T\in\mathcal{F}(\Delta)$ is a basic generalized tilting $\Lambda_n$-module which is not equal to the characteristic tilting module. Then, there exists an integer $i$, where $1\leq i\leq n-1$, and an integer $x$, where $i<x\leq n$, such that:
		$$T=Z(i,x)\oplus \bigoplus_{k=x+1}^n M(i,k),$$
		if $x\equiv i \mod 2$, and
		$$T=Z(i,x)\oplus \bigoplus_{k=x+1}^n M(i+1,k),$$
		if $x\not\equiv i \mod 2$.
		
		In particular, there are $\frac{n(n-1)}{2}$ basic generalized tilting $\Lambda_n$-modules in $\mathcal{F}(\Delta)$ which are not equal to the characteristic tilting module. In total, there are $n(n-1)+1$ generalized tilting $\Lambda_n$-modules.
	\end{theorem}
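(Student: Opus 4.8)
The plan is to translate the statement, via Corollary~\ref{corollary:theorem:RS} and Theorem~\ref{theorem:order isomorphism}, into a classification of $n$-element antichains in the graded poset $(Q_n,\prec)$, and then to read these off from the explicit list of relations $\prec_0$. By Proposition~\ref{proposition:every tilting module belongs to row i and i+1 for some i}, such an antichain is contained in the union of rows $i$ and $i+1$ for some $1\le i\le n-1$, and by Lemma~\ref{lemma:P(j) for j<i is a direct summand of T} it contains the $i-1$ vertices $(1,2),\dots,(i-1,i)$ corresponding to $\bigoplus_{j<i}P(j)$; so what remains is to classify the $(n-i+1)$-element antichains inside rows $i$ and $i+1$ which, by Lemma~\ref{lemma:M(i,n) or M(i+1,n) is a direct summand of T}, contain $(i,n)$ or $(i+1,n)$. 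Since neither row carries any internal $\prec$-relation, the entire problem reduces to understanding the ``bipartite'' relations $(i,k)\prec(i+1,\ell)$, which are given directly by $\prec_0$.

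First I would dispose of the case in which the antichain does not meet row $i+1$: then its $n-i+1$ vertices all lie in row $i$, hence constitute all of row $i$, so $T=\bigoplus_{j<i}P(j)\oplus\bigoplus_{k=i}^n M(i,k)$. If $i=1$ this is the characteristic tilting module, which is excluded by hypothesis; hence $i\ge 2$, and using $M(i-1,i)=P(i-1)$ and $M(i,i)=\Delta(i)$ together with the definition of $Z$ one recognizes $T$ as $Z(i-1,i)\oplus\bigoplus_{k=i+1}^n M(i,k)$, i.e. the asserted form for the pair $(i-1,i)$ (note $i\not\equiv i-1\bmod 2$).

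Otherwise the antichain meets row $i+1$, which (since $(i,i)\prec$ every vertex of row $i+1$) also forces $(i,i)$ to be absent. The crux is then a counting argument combined with the explicit relations: pairing $(i,i+d)$ with $(i+1,i+d)$ for $d=1,\dots,n-i$ gives $n-i$ ``diagonal'' pairs which together exhaust the $n-i$ admissible vertices of each row, while the antichain has $n-i+1$ vertices among them, so at least one pair has both members in the antichain; and for $d_1<d_2$ at least one of $(i,i+d_1)\prec(i+1,i+d_2)$, $(i,i+d_2)\prec(i+1,i+d_1)$ holds (checking the four parity cases against $\prec_0$), which would contradict the antichain property if both the $d_1$-pair and the $d_2$-pair occurred. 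So there is a unique index $x$ with $i<x\le n$ for which $M(i,x)$ and $M(i+1,x)$ are both summands of $T$; the same relations then force the rest of the antichain to consist exactly of the vertices occurring in $Z(i,x)$ below $x$, and to continue above $x$ only along row $i$ when $x\equiv i\bmod 2$ and only along row $i+1$ when $x\not\equiv i\bmod 2$. Counting shows no further freedom remains, yielding $T=Z(i,x)\oplus\bigoplus_{k=x+1}^n M(i,k)$ or $T=Z(i,x)\oplus\bigoplus_{k=x+1}^n M(i+1,k)$ according to the parity of $x-i$. I expect the parity bookkeeping in this last step to be the main obstacle: one must verify, in each of the few parity combinations of $i$ and $x$, that $\prec_0$ excludes precisely the right vertices so that exactly $n-i+1$ remain — essentially the same exclusion count as in the proof of Proposition~\ref{proposition:If both M(i,n) and M(i+1,n) are direct summands then T=Z(i,n)}, which is the case $x=n$.

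For the enumeration, the displayed modules are indexed by the pairs $(i,x)$ with $1\le i\le n-1$ and $i<x\le n$, of which there are $\sum_{i=1}^{n-1}(n-i)=\tfrac{n(n-1)}{2}$; conversely, for each such pair the module $Z(i,x)\oplus\bigoplus_{k=x+1}^n M(\cdot,k)$ is self-orthogonal by Theorem~\ref{theorem:order isomorphism}, has $n$ indecomposable summands, hence is a generalized tilting module by Corollary~\ref{corollary:theorem:RS}, and it is not the characteristic tilting module, since it has $M(i+1,x)$ as a summand and $M(i+1,x)$ is not isomorphic to any $T(k)=\Omega(1,1,k)$ (as $i+1\ge 2$). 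A direct check shows these $\tfrac{n(n-1)}{2}$ modules are pairwise non-isomorphic (the pair $(i,x)$ being recoverable from $T$). Finally, using that every generalized tilting module lies in $\mathcal{F}(\Delta)$ or $\mathcal{F}(\nabla)$, that the only one in $\mathcal{F}(\Delta)\cap\mathcal{F}(\nabla)$ is the characteristic tilting module, and that the simple-preserving duality exchanges the generalized tilting modules in $\mathcal{F}(\Delta)$ with those in $\mathcal{F}(\nabla)$, the total count is $2\cdot\tfrac{n(n-1)}{2}+1=n(n-1)+1$.
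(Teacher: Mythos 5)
Your proposal is correct, and for the uniqueness part it takes a genuinely different route from the paper. The paper argues by induction on $n$: after reducing to a tier $i$ via Proposition~\ref{proposition:every tilting module belongs to row i and i+1 for some i} and handling the case where both $M(i,n)$ and $M(i+1,n)$ occur (Proposition~\ref{proposition:If both M(i,n) and M(i+1,n) are direct summands then T=Z(i,n)}), it strips off $M(i,n)$ or $M(i+1,n)$, invokes the Ext-fullness of restriction to $\Lambda_{n-1}$ (Lemma~\ref{lemma:natural functor is ext-full}) to see that the complement is a generalized tilting $\Lambda_{n-1}$-module, and then runs a parity case analysis to exclude the wrong inductive shapes. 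You instead classify the $n$-element antichains of $(Q_n,\prec)$ directly: after fixing the tier and the forced projectives, the $n-i+1$ remaining vertices sit in two rows covered by $n-i$ ``diagonal pairs'' $\{(i,x),(i+1,x)\}$, so pigeonhole forces a doubled column, your parity check of $\prec_0$ shows at most one column can be doubled, and the exclusion count determined by $(i,x)$ and $(i+1,x)$ leaves exactly $n-i+1$ admissible vertices, which must therefore all occur. I verified the two claims you flag as the crux: for $1\le d_1<d_2$ at least one of $(i,i+d_1)\prec_0(i+1,i+d_2)$, $(i,i+d_2)\prec_0(i+1,i+d_1)$ holds in every parity combination, and the exclusion count does come out to exactly $n-i+1$ in both parities of $x-i$, matching $Z(i,x)$ plus the appropriate tail; your identification of the ``all of row $i$'' case with the pair $(i-1,i)$ is also correct since $M(i-1,i)=P(i-1)$ and $M(i,i)=\Delta(i)$. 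What your approach buys is a self-contained, non-inductive argument entirely inside the poset $Q_n$ (it does not really need Lemma~\ref{lemma:M(i,n) or M(i+1,n) is a direct summand of T} or the restriction functor for uniqueness); what the paper's induction buys is less explicit parity bookkeeping in $Q_n$ at the price of the representation-theoretic input of Lemma~\ref{lemma:natural functor is ext-full}. The existence direction and the two counts are handled essentially as in the paper.
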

	\begin{proof}
		It is easily verified that Theorem~\ref{theorem:order isomorphism} implies that a module of this form is self-orthogonal and that it contains $n$ non-isomorphic indecomposable direct summands. By Lemma~\ref{corollary:theorem:RS}, it follows that such a module is a generalized tilting module. Furthermore, for each pair $(i,x)$, we obtain non-isomorphic modules, which gives us $\sum_{i=1}^{n-1}n-i=\frac{n(n-1)}{2}$ basic generalized tilting $\Lambda_n$-modules in $\mathcal{F}(\Delta)$ which are not equal to the characteristic tilting module. To obtain the basic generalized tilting $\Lambda_n$-modules in $\mathcal{F}(\nabla)$, which are not equal to the characteristic tilting module, we use the simple preserving duality. 
		
		To prove that every basic generalized tilting module in $\mathcal{F}(\Delta)$, not equal to the characteristic tilting module, is of this form
		we proceed by induction on $n$. For $n=2$, we have two basic generalized tilting modules contained in $\mathcal{F}(\Delta)$, namely the characteristic tilting module and the module $P_{\Lambda_2}(1)\oplus P_{\Lambda_2}(2)=Z(1,2)$. Thus, the claim holds for $n=2$.
		
		Now, let $T$ be a basic generalized tilting module over $\Lambda_{n}$. By Proposition \ref{proposition:every tilting module belongs to row i and i+1 for some i} there exists an integer $i$, where $1\leq i\leq n-1$, such that $T$ belongs to the $i$th tier. If $T=Z(i,n)$ there is nothing to prove. Otherwise, according to Proposition~\ref{proposition:If both M(i,n) and M(i+1,n) are direct summands then T=Z(i,n)}, we may decompose $T$ as $T=T^\prime \oplus M(i,n)$ or as $T=T^\prime \oplus M(i+1,n)$, where, in both cases, $T^\prime$ is a basic generalized tilting module over $\Lambda_{n-1}$, according to Lemma \ref{lemma:natural functor is ext-full}.
		
		Assume first that $i\equiv n \mod 2$.
		
		\begin{enumerate}[$\bullet$]
			\item Suppose $T=T^\prime \oplus M(i,n)$. If $T^\prime$ is the characteristic tilting module over $\Lambda_{n-1}$, then $T$ would be the characteristic tilting module over $\Lambda_n$, contradicting our assumption. If
			$$T^\prime=Z(i,x)\oplus \bigoplus_{k=x+1}^{n-1} M(i+1,k),$$
			for some $x\not\equiv i \mod 2$,	then $T$ contains the summand $M(i+1, n-1)$, which is a contradiction as there is a non-zero extension from $M(i, n)$ to $M(i+1, n-1)$. Therefore, we must have
			$$T^\prime=Z(i,x)\oplus \bigoplus_{k=x+1}^{n-1} M(i,k),$$
			for some $1\leq i\leq n-2$ and $i<x\leq n-2$ such that $x\equiv i\mod 2$.
			\item Suppose $T=T^\prime \oplus M(i+1, n)$. If $T^\prime$ is the characteristic tilting module over $\Lambda_{n-1}$, then $i=1$ and we have a non-zero extension from $M(1,n-2)$ to $M(2, n)$.
			
			If
			$$T^\prime=Z(i,x)\oplus \bigoplus_{k=x+1}^{n-1} M(i,k),$$
			for some $x\equiv i \mod 2$, then $T$ contains the summand $M(i, n-1)$, which is a contradiction as there is a non-zero extension from $M(i, n-1)$ to $M(i+1, n)$. Therefore, we must have
			$$T^\prime=Z(i,x)\oplus \bigoplus_{k=x+1}^{n-1} M(i+1,k),$$
			for some $1\leq i\leq n-2$ and $i<x\leq n-1$ such that $x\not\equiv i\mod 2$.
		\end{enumerate}
		
		Assume instead that $i\not\equiv n \mod 2$.
		
		\begin{enumerate}[$\bullet$]
			\item Suppose $T=T^\prime \oplus M(i,n)$. If $T^\prime$ is the characteristic tilting module over $\Lambda_{n-1}$, then $T$ would be the characteristic tilting module over $\Lambda_n$, contradicting our assumption. If
			$$T^\prime=Z(i,x)\oplus \bigoplus_{k=x+1}^{n-1} M(i+1,k),$$
			for some $x\not\equiv i \mod 2$, then $T$ contains the summand $M(i+1, n-2)$, which is a contradiction as there is a non-zero extension from $M(i, n)$ to $M(i+1, n-2)$. Therefore, we must have
			$$T^\prime=Z(i,x)\oplus \bigoplus_{k=x+1}^{n-1} M(i,k),$$
			for some $1\leq i\leq n-2$ and $i<x\leq n-1$ such that $x\equiv i\mod 2$.
			\item Suppose $T=T^\prime \oplus M(i+1, n)$. If $T^\prime$ is the characteristic tilting module over $\Lambda_{n-1}$, then $i=1$ and we have a non-zero extension from $M(1,n-1)$ to $M(2,n)$.
			
			If
			$$T^\prime=Z(i,x)\oplus \bigoplus_{k=x+1}^{n-1} M(i,k),$$
			for some $x\equiv i \mod 2$, then $T$ contains the summand $M(i, n-2)$, which is a contradiction as there is a non-zero extension from $M(i, n-2)$ to $M(i+1, n)$. Therefore, we must have
			$$T^\prime=Z(i,x)\oplus \bigoplus_{k=x+1}^{n-1} M(i+1,k),$$
			for some $1\leq i\leq n-2$ and $i<x\leq n-1$ such that $x\not\equiv i\mod 2$.
		\end{enumerate} 
		This finishes the proof.
	\end{proof}
	
	We remark that $L(1)$ is a direct summand in exactly one basic generalized tilting module, namely the characteristic tilting module.
	
	\section{Exceptional sequences}
	Let $\Lambda$ be a finite-dimensional $\Bbbk$-algebra. Recall, see \cite{Bo89},
	that an indecomposable $\Lambda$-module $M$ is called  \emph{exceptional} provided that
	\begin{enumerate}[$\bullet$]
		\item $\operatorname{End}_{\Lambda}(M)\cong\Bbbk$;
		\item $\operatorname{Ext}^i_{\Lambda}(M,M)=0$, for all $i>0$.
	\end{enumerate}
	A sequence $\mathbf{M}=(M_1,\dots, M_k)$ of $\Lambda$-modules is called
	an \emph{exceptional sequence} provided that 
	\begin{enumerate}[$\bullet$]
		\item each $M_i$ is exceptional;
		\item $\operatorname{Ext}^i_{\Lambda}(M_x,M_y)=0$, for all $1\leq y< x\leq k$ and all $i\geq 0$.
	\end{enumerate}
	An exceptional sequence is called \emph{full (or complete)} if it generates the derived category. In particular, this means that it must contain at least $n$ modules, where $n$ is the number of isomorphism classes of simple $\Lambda$-modules. Indeed, suppose an exceptional sequence $\mathbf{N}$ contains $k<n$ modules. Let $\mathcal{A}\subset D^b(\Lambda)$ be the triangulated subcategory generated by $\mathbf{N}$. Passing to the Grothendieck group $K_0(\mathcal{A})$, we see that any mapping cone of a homomorphism between modules in $\mathbf{N}$ equals a linear combination of modules in $\mathbf{N}$, implying that $\operatorname{rank}K_0(\mathcal{A}) \leq k<n=\operatorname{rank}K_0(D^b(\Lambda))$. Thus, such an exceptional sequence cannot be full.
	
	\begin{proposition}\label{proposition:exceptional modoules}
		The only exceptional $\Lambda_n$-modules are the standard modules $\Delta(i)$ and the costandard modules $\nabla(i)$, for $i=1,2,\dots, n$.
	\end{proposition}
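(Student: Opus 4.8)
The plan is to read off the exceptional modules from the classification of self-orthogonal indecomposable modules obtained in Section~4, combined with a computation of endomorphism algebras. Since an exceptional module is by definition an indecomposable self-orthogonal module $M$ with $\operatorname{End}_{\Lambda_n}(M)\cong\Bbbk$, the exceptional modules are precisely the bricks among the self-orthogonal indecomposables, so this is really a question of deciding which self-orthogonal indecomposables have trivial endomorphism algebra.

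First I would settle the easy inclusion, that every $\Delta(i)$ and every $\nabla(i)$ is exceptional. Each $\Delta(i)$ is indecomposable, and $\operatorname{End}_{\Lambda_n}(\Delta(i))\cong\Bbbk$ because $\Delta(i)$ is either simple (when $i=1$) or uniserial of length two with two distinct composition factors. For self-orthogonality, splicing the non-split sequences $\Delta(i+1)\hookrightarrow P(i)\tto\Delta(i)$ of Lemma~\ref{lemma: short exact sequences} yields a projective resolution of $\Delta(i)$ whose term in homological degree $m$ is $P(i+m)$ (for $i<n$; and $\Delta(n)=P(n)$ is projective), and applying $\operatorname{Hom}_{\Lambda_n}(-,\Delta(i))$ gives $\operatorname{Ext}_{\Lambda_n}^m(\Delta(i),\Delta(i))=0$ for all $m>0$, since $L(i+m)$ is not a composition factor of $\Delta(i)$ when $m\geq 1$. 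Applying the simple-preserving duality $\star$, which carries $\Delta(i)$ to $\nabla(i)$, the same conclusions hold for the costandard modules.

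For the reverse inclusion, let $M$ be exceptional. By the results of Section~4 — specifically the proposition stating that an indecomposable module with neither a standard nor a costandard filtration fails to be self-orthogonal, together with Lemma~\ref{lemma:Ext between simples is non-zero} for the simple modules — every self-orthogonal indecomposable lies in $\mathcal{F}(\Delta)\cup\mathcal{F}(\nabla)$. Since $\star$ is a duality preserving indecomposability, self-orthogonality and endomorphism algebras while interchanging $\mathcal{F}(\Delta)$ and $\mathcal{F}(\nabla)$ and sending $\Delta(i)$ to $\nabla(i)$, it suffices to show that an exceptional $M\in\mathcal{F}(\Delta)$ must equal some $\Delta(i)$. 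Then $M\in\mathcal{D}_n$, hence $M=M(i,k)$ for a unique pair with $1\leq i\leq k\leq n$, and I would compute $\dim_\Bbbk\operatorname{End}_{\Lambda_n}(M(i,k))$ as follows. If $k=i$, then $M(i,k)=\Delta(i)$ and the dimension is $1$. If $k=i+1$, then the kernel $\bigoplus_{x=i+1}^{k-1}\Delta(x)$ of the projective cover of $M(i,k)$ is zero, so $M(i,k)\cong P(i)$, which has $L(i)$ as a composition factor with multiplicity two, and the dimension is $2$ by Lemma~\ref{lemma: dim of hom-spaces}. If $k\geq i+2$, then Lemma~\ref{lemma: dim of hom-spaces in F(Delta)}, applied with $j=i$ and $\ell=k$ (the subcase $j\leq i$, $\ell=k$), gives $\dim_\Bbbk\operatorname{Hom}_{\Lambda_n}(M(i,k),M(i,k))=\min(k,k)-\max(i,i-1)+1=k-i+1$. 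Thus $\dim_\Bbbk\operatorname{End}_{\Lambda_n}(M(i,k))=k-i+1$ in every case, which equals $1$ if and only if $k=i$, i.e.\ if and only if $M(i,k)=\Delta(i)$. Hence the only exceptional modules in $\mathcal{F}(\Delta)$ are $\Delta(1),\dots,\Delta(n)$, and dualizing, the only ones in $\mathcal{F}(\nabla)$ are $\nabla(1),\dots,\nabla(n)$; since $\Delta(1)=\nabla(1)=L(1)$, this is exactly the asserted list.

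The genuine content is the value of $\dim_\Bbbk\operatorname{End}_{\Lambda_n}(M(i,k))$, and the case $k\geq i+2$ is where I expect the main effort to lie; but it is an immediate instance of Lemma~\ref{lemma: dim of hom-spaces in F(Delta)}, and the two remaining cases follow at once from Lemma~\ref{lemma: dim of hom-spaces} and the description of projective covers in Lemma~\ref{lemma:projective covers and kernels}, so no essentially new argument is needed.
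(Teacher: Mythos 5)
Your proof is correct, and its overall shape matches the paper's: both reduce the question to deciding which self-orthogonal indecomposables are bricks, using the classification from Section~4. The difference lies in how the non-(co)standard candidates are ruled out. The paper works directly with $\Omega(i,j,k)$ for $i=j=1$ or $|i-j|=1$ and $k>\max(i,j)$, and simply exhibits one non-scalar endomorphism (sending the composition factor $L(k-1)$ in the top to the one in the socle), which is short and self-contained. You instead reduce by the duality $\star$ to $\mathcal{F}(\Delta)$, identify the candidates with the modules $M(i,k)$, and read off $\dim\operatorname{End}_{\Lambda_n}(M(i,k))=k-i+1$ from Lemma~\ref{lemma: dim of hom-spaces in F(Delta)} (plus the observations $M(i,i)=\Delta(i)$ and $M(i,i+1)\cong P(i)$, both of which check out against Lemma~\ref{lemma:projective covers and kernels}). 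This buys you the exact dimension of the endomorphism algebra rather than just the inequality $\dim>1$, at the cost of leaning on the heavier Hom-space lemma; both routes are valid, and yours correctly covers the modules in $\mathcal{F}(\Delta)\cap\mathcal{F}(\nabla)$ as the case $M(1,k)$.
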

	\begin{proof}
		By Proposition \ref{proposition:characterization of self-orthogonal modules}, we know that $\Omega(i,j,k)$ is self-orthogonal exactly when  $i=j=1$ or $|i-j|=1$.
		For such $i,j$, the module $\Omega(i,j,k)$ is neither standard nor costandard if and only if $k>\max(i,j)$. In this case, there is one composition factor $L(k-1)$ in the top of $\Omega(i,j,k)$ and another composition factor $L(k-1)$ in the socle of $\Omega(i,j,k)$. This means that there is an endomorphism of $\Omega(i,j,k)$ sending the composition factor $L(k-1)$ in the top to the composition factor $L(k-1)$ in the socle. Thus $\dim \operatorname{End}_{\Lambda_n}(\Omega(i,j,k))>1$ and $\Omega(i,j,k)$ is therefore not an exceptional module.
		
		For any quasi-hereditary algebra both standard and costandard modules are self-orthogonal and have trivial endomorphism algebras, and are therefore exceptional modules. 
	\end{proof}
	\begin{theorem}\label{theorem:exceptional sequences}
		Let $\mathbf{M}$ be a full exceptional sequence of $\Lambda_n$-modules. Then $\mathbf{M}$ is of the form
		\begin{align}\tag{$\ast$}\label{equation:full exceptional sequence}
			\left(\nabla(m_1),\nabla(m_2),\dots, \nabla(m_i),L(1), \Delta(n_1),\Delta(n_2),\dots, \Delta(n_j)\right)
		\end{align}
		where
		\begin{enumerate}[$\bullet$]
			\item $i+j=n-1$;
			\item $\{m_1,m_2,\dots, m_i, n_1,n_2,\dots, n_j\}=\{2,3,\dots, n\}$;
			\item $m_1 > m_2 >\dots >m_i$ and $n_1<n_2<\dots n_j$.
		\end{enumerate}
	\end{theorem}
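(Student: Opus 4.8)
The plan is to determine, in turn, which modules can occur in $\mathbf{M}$, the multiset of their labels, and then the order in which they occur; every step is driven by the defining vanishing condition of an exceptional sequence together with Proposition~\ref{proposition:exceptional modoules} and Proposition~\ref{proposition:ext between standards and costandards}.

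\textbf{Step 1: the terms and their count.} First I would record two elementary homomorphism facts, immediate from the Loewy diagrams in Section~2: $\operatorname{Hom}_{\Lambda_n}(\Delta(i),\nabla(i))\neq 0$ (the composite $\Delta(i)\tto L(i)\hookrightarrow\nabla(i)$), and, for $i\geq 2$, $\operatorname{Hom}_{\Lambda_n}(\nabla(i),\Delta(i))\neq 0$ (the composite $\nabla(i)\tto L(i-1)\hookrightarrow\Delta(i)$). By Proposition~\ref{proposition:exceptional modoules} every term of $\mathbf{M}$ is some $\Delta(i)$ or $\nabla(i)$; since exceptional modules have trivial endomorphism ring, no module can occur twice; and if, for some $i\geq 2$, both $\Delta(i)$ and $\nabla(i)$ occurred in $\mathbf{M}$, then whichever of them occurs later would admit a nonzero homomorphism to the one occurring earlier, contradicting the definition of an exceptional sequence. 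Recalling that $\Delta(1)=\nabla(1)=L(1)$, this shows $\mathbf{M}$ contains at most one term of label $i$ for each $i\in\{1,\dots,n\}$, hence at most $n$ terms. Since a full exceptional sequence has at least $n$ terms (as recalled just before the statement), $\mathbf{M}$ has exactly $n$ terms, one of each label; the label-$1$ term is $L(1)$, and for $2\leq i\leq n$ exactly one of $\Delta(i),\nabla(i)$ appears. Write $N$ (resp.\ $D$) for the set of labels occurring as a costandard (resp.\ standard) module, so that $N\sqcup D=\{2,\dots,n\}$.

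\textbf{Step 2: $L(1)$ separates the costandards from the standards.} The key point is the identity, valid for $m\geq 2$,
\[
\operatorname{Ext}_{\Lambda_n}^{m-1}\bigl(\nabla(m),L(1)\bigr)\;\cong\;\operatorname{Ext}_{\Lambda_n}^{m-1}\bigl(L(1),\Delta(m)\bigr)\;=\;\operatorname{Ext}_{\Lambda_n}^{m-1}\bigl(\Delta(1),\Delta(m)\bigr)\;\neq\;0,
\]
where the first isomorphism is induced by the simple-preserving duality $\star$ (using $L(1)^\star=L(1)$ and $\nabla(m)^\star=\Delta(m)$), and the non-vanishing is Proposition~\ref{proposition:ext between standards and costandards} applied with exponent $m-1>0$. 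Consequently, if $L(1)$ occurred before some $\nabla(m)$ in $\mathbf{M}$, this nonzero group would violate the defining vanishing of an exceptional sequence; dually, if some $\Delta(d)$ occurred before $L(1)$, the nonzero group $\operatorname{Ext}_{\Lambda_n}^{d-1}(L(1),\Delta(d))$ would do so. Hence $\mathbf{M}$ has the shape $(\nabla(m_1),\dots,\nabla(m_i),L(1),\Delta(n_1),\dots,\Delta(n_j))$ with $\{m_1,\dots,m_i\}=N$, $\{n_1,\dots,n_j\}=D$, and $i+j=n-1$.

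\textbf{Step 3: the internal orders.} By Proposition~\ref{proposition:ext between standards and costandards}, $\operatorname{Ext}_{\Lambda_n}^{b-a}(\Delta(a),\Delta(b))\neq 0$ and $\operatorname{Ext}_{\Lambda_n}^{b-a}(\nabla(b),\nabla(a))\neq 0$ whenever $a<b$. Therefore, if $\Delta(n_p)$ occurred before $\Delta(n_q)$ with $n_p>n_q$, the nonzero group $\operatorname{Ext}_{\Lambda_n}^{n_p-n_q}(\Delta(n_q),\Delta(n_p))$ would contradict the exceptional-sequence axiom, so $n_1<n_2<\dots<n_j$; symmetrically, if $\nabla(m_p)$ occurred before $\nabla(m_q)$ with $m_p<m_q$, then $\operatorname{Ext}_{\Lambda_n}^{m_q-m_p}(\nabla(m_q),\nabla(m_p))\neq 0$, so $m_1>m_2>\dots>m_i$. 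This is exactly the form asserted in the theorem. The only step requiring more than routine bookkeeping is Step~2 for large $m$: since $\operatorname{Hom}_{\Lambda_n}(\nabla(m),L(1))=0$ as soon as $m\geq 3$, one genuinely needs a \emph{higher} extension to pin down the position of $L(1)$, and the economical way to exhibit it is the duality reduction to Proposition~\ref{proposition:ext between standards and costandards} displayed above; it would also be natural to record the converse, that every such sequence is indeed a full exceptional sequence, which follows from Ringel's orthogonality of $(\mathcal F(\Delta),\mathcal F(\nabla))$, Lemma~\ref{lemma:Ext from Delta to simple}, the $\operatorname{Hom}$-computations of Step~1, and a rank argument in the Grothendieck group.
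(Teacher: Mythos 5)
Your proof of the stated implication is correct, and its skeleton coincides with the paper's: restrict to standard and costandard modules via Proposition~\ref{proposition:exceptional modoules}, exclude the simultaneous occurrence of $\Delta(i)$ and $\nabla(i)$ by the explicit homomorphisms factoring through $L(i)$ and $L(i-1)$, conclude that there are exactly $n$ terms (one per label), and then pin down the order by exhibiting non-vanishing extensions. Where you genuinely differ is in how those extensions are sourced. The paper separates the costandard block from the standard block by invoking Proposition~\ref{proposition:non-zero extensions between F(Delta) and F(nabla)} and orders the standard block via Theorem~\ref{theorem:order isomorphism} (through Propositions~\ref{proposition:edge in Q_n corresponds to non-zero ext} and~\ref{proposition: Ext from (i,k) to (j,l) is non-zero for j>=i+2}); you instead derive everything from the elementary Proposition~\ref{proposition:ext between standards and costandards} together with the duality $\star$, routing the separation through $L(1)=\Delta(1)=\nabla(1)$: the isomorphism $\operatorname{Ext}_{\Lambda_n}^{m-1}(\nabla(m),L(1))\cong\operatorname{Ext}_{\Lambda_n}^{m-1}(\Delta(1),\Delta(m))\neq 0$ forces every costandard term to precede $L(1)$ and, dually, every standard term to follow it, which already yields the block decomposition. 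This is more economical, since it bypasses the machinery of Sections~4--5 entirely, and it is logically complete for the implication actually asserted in the theorem.

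One caveat: the paper's proof also establishes the converse, namely that every sequence of the form $(\ast)$ is a \emph{full} exceptional sequence, which is what makes the classification exhaustive. You only sketch this, and the suggested ``rank argument in the Grothendieck group'' cannot do the job for generation of the derived category: equality of ranks is the necessary condition used to bound the length from below, not a sufficient condition for generation. The paper instead produces all simple modules inductively as cokernels of $L(i-1)\hookrightarrow\Delta(i)$ and kernels of $\nabla(i)\tto L(i-1)$; some such explicit construction is needed if the converse is to be included.
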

	\begin{proof}
		By Proposition \ref{proposition:exceptional modoules}, the standard and costandard modules are the only exceptional modules, so no other modules may be included in an exceptional sequence.
		We observe that the only non-zero homomorphisms, not equal to a scalar multiple of the identity, between standard and costandard modules are the following.
		\begin{enumerate}[$\bullet$]
			\item We may map the top of $\Delta(k)$ to the socle of $\nabla(k)$.
			\item We may map the top of $\nabla(k)$ to the socle of $\Delta(k)$.
			\item We may map the top of $\Delta(k)$ to the socle of $\Delta(k+1).$
			\item We may map the top of $\nabla(k)$ to the socle of $\nabla(k-1)$.
		\end{enumerate}
		This implies that an exceptional sequence cannot contain both $\Delta(k)$ and $\nabla(k)$ for $k\neq 1$. Further, as a full exceptional sequence must contain $n$ modules, it has to be of the form \eqref{equation:full exceptional sequence}, up to ordering. 
		
		By Proposition~\ref{proposition:non-zero extensions between F(Delta) and F(nabla)}, there is a non-zero extension from each costandard module to any standard module. This implies that any costandard module must come before any standard module in an exceptional sequence. By Theorem~\ref{theorem:order isomorphism}, or more specifically, by Proposition~\ref{proposition:edge in Q_n corresponds to non-zero ext} and Proposition~\ref{proposition: Ext from (i,k) to (j,l) is non-zero for j>=i+2}, there is a non-zero extension from $\Delta(i)$ to $\Delta(j)$, for every $i<j$. Using the simple preserving duality, we find that there is a non-zero extension from $\nabla(i)$ to $\nabla(j)$, for every $i>j$. This implies that any full exceptional sequence must be of the form \eqref{equation:full exceptional sequence}, as it must contain at least $n$ modules.
		
		Left to prove is that a sequence $\mathbf{M}=(M_1,\dots, M_n)$ of the form \eqref{equation:full exceptional sequence} actually is a full exceptional sequence. That $\operatorname{Hom}_{\Lambda_n}(M_x,M_y)=0$, for $x>y$, is clear from the first part of the proof. Furthermore, for every quasi-hereditary algebra, the following equality holds.
		$$\operatorname{Ext}^{m}(\Delta(i),\Delta(j))\cong\operatorname{Ext}^{m}(\nabla(j),\nabla(i))\cong\operatorname{Ext}^{m}(\Delta(k),\nabla(\ell))=0,$$
		for $i>j$, any $k,\ell$ and $m>0$. From this it follows that any sequence of the form \eqref{equation:full exceptional sequence} is exceptional.
		
		Left to show is that a sequence of the form \eqref{equation:full exceptional sequence} generates the derived category (as a triangulated category). We recall that any triangulated category is closed under taking kernels of epimorphisms and cokernels of monomorphisms. As the set of simple modules generate the derived category, it suffices to show that we can obtain the simple modules $L(i)$ by performing these operations on the modules in our sequence.
		
		We proceed by induction on $i=1,2,\dots, n$. The basis for the induction is clear, since $L(1)$ is included in any sequence of the form \eqref{equation:full exceptional sequence}. Next, assume that $L(i-1)$ can be obtained from our sequence. By assumption, the sequence contains either $\Delta(i)$ or $\nabla(i)$.
		In the first case, $L(i)$ is the cokernel of the inclusion $L(i-1)\hookrightarrow \Delta(i)$. In the second case, $L(i)$ is the kernel of the projection $\nabla(i)\tto L(i-1)$. This shows that any sequence of the form \eqref{equation:full exceptional sequence} is a full exceptional sequence. \qedhere
	\end{proof}
					
					\vspace{5mm}
					\printbibliography
					\vspace{5mm}
					
					\noindent
					Department of Mathematics, Uppsala University, Box. 480,
					SE-75106, Uppsala, SWEDEN, 
					
					\noindent
					EPW email: {\tt elin.persson.westin\symbol{64}math.uu.se} \\
					MT email: {\tt markus.thuresson\symbol{64}math.uu.se}

				\end{document}